\theoremstyle{plain}
\newtheorem{theorem}{Theorem}[section]
\newtheorem{proposition}[theorem]{Proposition}
\newtheorem{corollary}[theorem]{Corollary}
\newtheorem{lemma}[theorem]{Lemma}
\theoremstyle{definition}
\theoremstyle{remark}
\newtheorem{remark}[theorem]{Remark}
\newtheorem{example}[theorem]{Example}
\numberwithin{equation}{section}
\definecolor{darkgreen}{RGB}{0,180,0}
\definecolor{darkred}{rgb}{0.7,0,0} 
\newcommand{\defn}[1]{{\color{darkred}\emph{#1}}} 
\definecolor{UQgold}{RGB}{196, 158, 54} 
\definecolor{UQpurple}{RGB}{73, 7, 94} 
\newcommand{\ZZ}{\mathbb{Z}}
\newcommand{\CC}{\mathbb{C}}
\newcommand{\cc}{\mathbf{c}}
\newcommand{\xx}{\mathbf{x}}
\newcommand{\yy}{\mathbf{y}}
\newcommand{\sym}[1]{S_{#1}}  
\newcommand{\states}{\mathfrak{S}}  
\newcommand{\Gstates}{\mathfrak{G}}  
\newcommand{\Dstates}{\mathfrak{D}}  
\newcommand{\G}{G}  
\newcommand{\wt}{\operatorname{wt}}  
\newcommand{\abs}[1]{\lvert #1 \rvert}
\newcommand{\GL}{\operatorname{GL}}
\newcommand{\svt}{\operatorname{SVT}}  
\newcommand{\eyd}{\operatorname{EYD}}  
\newcommand{\End}{\operatorname{End}}  
\newcommand{\Kdd}{\mathfrak{d}}
\lstdefinelanguage{Sage}[]{Python}
{morekeywords={False,sage,True},sensitive=true}
\definecolor{dblackcolor}{rgb}{0.0,0.0,0.0}
\definecolor{dbluecolor}{rgb}{0.01,0.02,0.7}
\definecolor{dgreencolor}{rgb}{0.2,0.4,0.0}
\definecolor{dgraycolor}{rgb}{0.30,0.3,0.30}
\begin{document}
\title{Double Grothendieck polynomials and colored lattice models}

\author{Valentin Buciumas}
\address[V.~Buciumas]{School of Mathematics and Physics, 
The University of Queensland, 
St.\ Lucia, QLD, 4072, 
Australia}
\email{valentin.buciumas@gmail.com}
\urladdr{https://sites.google.com/site/valentinbuciumas/}

\author{Travis Scrimshaw}
\address[T.~Scrimshaw]{School of Mathematics and Physics, 
The University of Queensland, 
St.\ Lucia, QLD, 4072, 
Australia}
\email{tcscrims@gmail.com}
\urladdr{https://people.smp.uq.edu.au/TravisScrimshaw/}

\keywords{double Grothendieck polynomial, colored lattice model, six-vertex model, vexillary permutation}
\subjclass[2010]{05E05, 82B23, 14M15, 05A19}

\begin{abstract}
We construct an integrable colored six-vertex model whose partition function is a double Grothendieck polynomial.
This gives an integrable systems interpretation of bumpless pipe dreams and recent results of Weigandt relating double Grothendieck polynomias with bumpless pipe dreams.
For vexillary permutations, we then construct a new model that we call the semidual version model.
We use our semidual model and the five-vertex model of Motegi and Sakai to given a new proof that double Grothendieck polynomials for vexillary permutations are equal to flagged factorial Grothendieck polynomials.
Taking the stable limit of double Grothendieck polynomials, we obtain a new proof that the stable limit is a factorial Grothendieck polynomial as defined by McNamara.
The states of our semidual model naturally correspond to families of nonintersecting lattice paths, where we can then use the Lindstr\"om--Gessel--Viennot lemma to give a determinant formula for double Schubert polynomials corresponding to vexillary permutations.
\end{abstract}

\maketitle

\section{Introduction}
\label{sec:intro}

The Yang--Baxter equation, also known as the star--triangle equation, has been the cornerstone of many aspects of modern mathematical physics.
It plays an central role in (quantum) integrable systems, which naturally arise across a broad spectrum of mathematics and physics.
One application has been to explain combinatorial phenomena and properties, such as in~\cite{BBBG19,BBF11,BSW20,BorodinWheelernsMac,EKLP92,FK94,FK96,GK17,HK05,HKZJ18,HPW20,Kuperberg96,KZJ17,MS13,MS14,MS19,WZJ19}.
Another has been to study probabilistic models~\cite{Borodin17,CP16,KMO15,KMO16,KMO16II,MS13,MS14}.
Solvable lattice models have also seen applications in the study of Whittaker functions~\cite{BBB,BBBG19II,BBBGIwahori,BBCFG12,BBCG12,Gray17,Ivanov12}.
In many of the papers cited, one equates an object of interest (such as a Hall--Littlewood polynomial or a Whittaker function) with the partition function of a solvable lattice model.
This then naturally implies interesting properties such as branching rules, Cauchy-type identities, exchange relations under Hecke operators, and combinatorial descriptions of the functions.

An important class of objects in algebraic geometry are Schubert varieties.
Consider the general linear group $G = \GL(\CC^n)$, the subgroup of lower triangular matrices $B$, and the subgroup of diagonal matrices $T$.
A complete flag is an element in the (complete/full) flag variety $G / B$ and corresponds to a sequence of subspaces $\{0\} = V_0 \subseteq \cdots \subseteq V_n = \CC^n$ such that $\dim V_i = i$.
A Schubert cell is a (left) $B$-orbit in $G / B$, and a Schubert variety is the Zariski closure of a Schubert cell.
The Schubert varieties are indexed by permutations and have many nice properties, such as Bruhat order corresponding to inclusion and forming a basis for the ($T$-)equivariant (connective) K-theory ring of the flag variety $K_T^{\beta}(G/B)$.
For additional background, we refer the reader to~\cite{Anderson11,Brion04} and references therein.

In this paper, we connect the equivariant K-theory of the flag variety with solvable lattice models.
To do this, we use the double ($\beta$-)Grothendieck polynomials $\G_w$, where $w$ is a permutation, as representatives for the Schubert varieties in the equivariant K-theory ring $K_T^{\beta}(G/B)$~\cite{FK94,FK96,Hudson14,LS82}.
Our main result (Theorem~\ref{thm:5vertex_atoms}) is a colored six-vertex model whose partition function is (up to a trivial factor of the parameter $\beta$) a double Grothendieck polynomial.
We construct our colored model as a translation of the bumpless pipe dreams with a fixed key given in~\cite{Weigandt20}, where resolving multiple crossings of two strands precisely corresponds to the colorization performed in~\cite{BBBG19,BSW20}.
Thus, we encode the permutation into the model by using the coloring.
Our proof is showing the Yang--Baxter equation implies the partition function satisfies the same functional equation as applying a divided difference operator defining double Grothendieck polynomials (up to a $\beta$ factor).
Therefore, we have a new proof of~\cite[Thm.~1.1]{Weigandt20}, which gives a formula for $\G_w$ as a sum over bumpless pipe dreams with key $w$ proven using a combination of algebraic and combinatorial techniques.
Our model is the colored version of the six-vertex model that Lascoux used in~\cite{Lascoux02} to describe $\G_w$ using alternating sign matrices.
By specializing $\beta = 0$, we have a new proof of the formula for double Schubert polynomials from~\cite{LLS18} in terms of bumpless pipe dreams.
We note that our results can be considered as the flag variety version of~\cite{BSW20,GK17,Motegi20,MS13} on the Grassmannian, the set of $k$-dimensional planes in $\CC^n$.

In the second part of our paper, we consider the analog of double Grothendieck polynomials for the equivariant K-theory of the Grassmannian, which are the factorial Grothendieck polynomials.
The stable limit of double Grothendieck polynomials decompose into finitely many factorial Grothendieck polynomials~\cite{Buch02,McNamara06}.
When $w$ is a vexillary permutation (it avoids the pattern $2143$), the stable limit of $\G_w$ is a single factorial Grothendieck polynomial.
Factorial Grothendieck polynomials have a determinant formula~\cite{IN13} and are given as the sum over set-valued tableaux~\cite{McNamara06}.

We look at what happens to our colored model restricted to vexillary permutations.
To do this, we first construct a variation of the uncolored version of our model by swapping $0 \leftrightarrow 1$ on the horizontal lines (\textit{i.e.}, on the auxiliary space), which we call the semidual model.
To encode the permutation $w$, we modify the semidual model to not be on a rectangular grid, but instead on a grid given by a partition $\Lambda_w$ that depends on $w$.
By a key property of vexillary permutations, the partition function for the semidual model is the same as for our original colored model and the semidual model becomes a five-vertex model.
In doing so, we can change the weights of one of the vertices by removing a $\beta$ factor and maintain integrability, and the resulting vertices become those of~\cite{GK17,MS13,Motegi20} (up to a gauge transformation and a symmetry of the model).
The fact that the shape of the semidual model is $\Lambda_w$ corresponds to imposing a flagging on set-valued tableaux under the natural bijection between (marked) states of the~\cite{MS13} model given in~\cite[Sec.~4.2]{MPS18}.
Therefore we obtain a new proof that double Grothendieck polynomials are sums over flagged set-valued tableaux~\cite{KMY09} (Theorem~\ref{thm:factorial_model}), which was proven using Gr\"obner geometry.
Furthermore, by taking the stable limit, we recover~\cite{McNamara06} (Theorem~\ref{thm:vexillary_flagged}), which was proven by examining the underlying combinatorics.
By taking appropriate specializations, we obtain new proofs of results from~\cite{Buch02,GK17,MS13,Motegi20,WZJ19}.

A state of the semidual model can also be realized as a family of nonintersecting lattice paths.
We can then apply the Lindstr\"om--Gessel--Viennot lemma~\cite{Lindstrom73,GV85}, but we can only do so when restricted to $\Lambda_w$ at $\beta = 0$, which gives a determinant formula for flagged (factorial) Schur functions (Theorem~\ref{thm:LGV_Schubert}) that seems to differ from~\cite{Wachs85} by elementary row and column operators.
However, we were unable to extend this to the determinant formula for $\G_w$ from~\cite{Anderson19,HM18,Matsumura17,MS19}.
Additionally, our nonintersecting lattice paths are distinct from those in~\cite{Kreiman05,LRS06} referenced in~\cite[Sec.~7.3]{Weigandt20} and from those in~\cite{Krattenthaler01,Krattenthaler05} and~\cite{KL04}.
We can also translate states of our models to excited Young diagrams, introduced independently in~\cite{Kreiman05} and~\cite{IN09}, by using the local moves from~\cite[Sec.~7]{Weigandt20}, which are also easy to see directly on the semidual model.
Excited Young diagrams were used to describe polynomial representatives of the $T$-equivariant K-theory of the Grassmannian~\cite{IN13,GK15}, and so we recover special cases of those results for $\GL(\CC^n)$.
We expect our approach using colored lattice models can be used to give a new proof of the general formulas from~\cite{IN13,GK15}.

This paper is organized as follows.
In Section~\ref{sec:background}, we provide the necessary background on double Grothendieck polynomials and colored lattice models.
In Section~\ref{sec:double_Grothendieck_model}, we give a new integrable colored lattice model whose partition function is a double Grothendieck polynomial and connect it to bumpless pipe dreams.
In Section~\ref{sec:dual_model}, we give our semidual model and connect it with excited Young diagrams and (flagged) set-valued tableaux.

After this paper was written, we were made aware of the paper~\cite{frozenpipes}, where a different lattice model interpretation for double Grothendieck polynomials is given.
The admissible states in their model are related to regular pipe dreams, whereas the admissible states in our model are related to bumpless pipes dreams.
The Boltzmann weights for the states (and the $L$-matrices) are different as there is no weight preserving bijection between the two.
Indeed, this can be seen for the case of $w = s_2$, where each of the two states have different factorizations (cf.~\cite[Ex.~6.3]{Weigandt20}).
Moreover, their applications are distinct from ours with the exception of Corollary~\ref{cor:symmetry}, which we realized we could prove after seeing their preprint.

\subsection*{Acknowledgments}

The authors thank Anna Weigandt for useful discussions and explanations involving her recent work, as well as comments on an early draft of this paper.
The authors also thank Paul Zinn-Justin for useful discussions.
The authors are grateful to the authors of~\cite{frozenpipes} for sharing their preprint. 
TS thanks Kohei Motegi and Kazumitsu Sakai for valuable discussions on their results~\cite{MS13,MS14}.
TS also thanks Kohei Motegi for inspiration for the semidual model in Section~\ref{sec:dual_model}. 
This paper benefited from computations using \textsc{SageMath}~\cite{sage}.

VB was supported by the Australian Research Council DP180103150.

\section{Background}
\label{sec:background}

Fix positive integers $n$ and $m$.
Let $\xx = (x_1, x_2, \dotsc, x_n)$ be a finite sequence of indeterminates, and let $\yy = (y_1, y_2, \ldots)$ be an infinite sequence of indeterminates.
For any sequence $(\alpha_1, \dotsc, \alpha_n) \in \ZZ^n$ of length $n$, denote $\xx^{\alpha} := x_1^{\alpha_1} x_2^{\alpha_2} \cdots x_n^{\alpha_n}$.

Let $\lambda = (\lambda_1, \lambda_2, \dotsc, \lambda_n)$ be a \defn{partition}, a sequence of weakly decreasing nonnegative integers (of length $n$).
Let $\ell(\lambda) = \max \{k \mid \lambda_k > 0 \}$ denote the \defn{length} of $\lambda$.
The Young diagram (in English convention) of $\lambda$ is a drawing consisting of stacks of boxes with row $i$ having $\lambda_i$ boxes pushed into the upper-left corner.
The \defn{$01$-sequence} of $\lambda$ is given by reading the boundary of $\lambda$, starting at the bottom, with horizontal steps being $0$ and vertical steps being $1$ (we ignore all trailing $0$s in the $01$-sequence).
For example, the $01$-sequence of $\lambda = (5, 2, 2, 1, 0, 0)$ for $n = 6$ is $11010110001$.

\subsection{Permutations}

Let $\sym{n}$ denote the symmetric (or permutation) group on $n$ elements with simple transpositions $(s_1, s_2, \dotsc, s_{n-1})$.
For $w \in \sym{n}$, let $\ell(w)$ denote the length of $w$: the minimal number of simple transpositions whose product equals $w$.
We denote by $w_0$ the longest element in  $\sym{n}$.
The \defn{diagram} of $w$ is
\[
D(w) := \{ (p, q) \in \{1, \dotsc, n\} \times \{1, \dotsc, n\} \mid w(p) > q \text{ and } w^{-1}(q) > p \},
\]
(these are the boxes not in the Rothe diagram of $w$) and the \defn{essential set} of $w$ is
\[
E(w) := \{ (p, q) \in D(w) \mid (p+1, q), (p, q+1) \notin D(w) \}.
\]
Let $w' = 1^k \times w$ denote the permutation given by
\[
w'(i) := \begin{cases}
i & \text{if } i \leq k, \\
w(i-k) + k & \text{if } i > k.
\end{cases}
\]
Let $\leq$ denote the (strong) Bruhat order on $S_n$.
For more information on the symmetric group and Bruhat order, we refer the reader to~\cite{Sagan01}.
Define $w\xx := (x_{w(1)}, x_{w(2)}, \dotsc, x_{w(n)})$.

A \defn{vexillary permutation} is a permutation $w$ that avoids the pattern $2143$; that is to say there does not exists $1 \leq i < j < k < \ell \leq n$ such that $w_j < w_i < w_{\ell} < w_k$.

\begin{proposition}[{\cite[Sec.~9]{Fulton92}; see also~\cite[Cor.~3.3]{KMY09}}]
\label{prop:vexillary_classification}
The following are equivalent for a permutation $w$:
\begin{itemize}
\item The permutation $w$ is vexillary.
\item There does not exist $(p,q), (i, j) \in E(w)$ such that $p < i$ and $q < j$.
\item We can permute the rows and columns of $D(w)$ to obtain a partition $\lambda_w$.
\end{itemize}
\end{proposition}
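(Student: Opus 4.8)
The plan is to read all three conditions as combinatorial statements about the Rothe diagram $D(w)$ and, specifically, about its \emph{row sets} $R_p := \{q : (p,q) \in D(w)\}$, and to prove a cycle of implications $(1) \Rightarrow (3) \Rightarrow (2) \Rightarrow (1)$. The first move is to record an elementary rearrangement lemma: $D(w)$ can be permuted (in rows and columns) into a Young diagram if and only if the family $\{R_1, \dots, R_n\}$ is totally ordered by inclusion. Permuting columns only relabels the ground set of each $R_p$ and hence preserves all inclusions, while permuting rows merely reorders the family; since a $0$--$1$ filling is a permutation of a Young diagram exactly when its rows form a chain, condition (3) is equivalent to the assertion that no two rows of $D(w)$ are incomparable. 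Concretely, given a chain $R_{(1)} \supseteq R_{(2)} \supseteq \cdots$, every column lies in an initial segment of the chain, so sorting rows by decreasing size and columns by decreasing ``depth'' yields a top-left-justified weakly decreasing shape, which is $\lambda_w$.

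The heart of $(1) \Leftrightarrow (3)$ is then a dictionary between the pattern $2143$ and incomparable rows. If $R_p, R_i$ are incomparable with $p < i$, pick $q \in R_p \setminus R_i$ and $j \in R_i \setminus R_p$; unwinding the definition of $D(w)$ forces $p < w^{-1}(q) < i < w^{-1}(j)$ together with $q < w(p) < j < w(i)$, so the four positions $p, w^{-1}(q), i, w^{-1}(j)$ carry values in the relative order $2143$. Conversely, from an occurrence $a < b < c < d$ with $w(b) < w(a) < w(d) < w(c)$ one checks directly that $(a, w(b)), (c, w(d)) \in D(w)$ while $(a, w(d)), (c, w(b)) \notin D(w)$, exhibiting $w(b) \in R_a \setminus R_c$ and $w(d) \in R_c \setminus R_a$. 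Hence $w$ avoids $2143$ precisely when its rows are nested, giving $(1) \Leftrightarrow (3)$.

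It then remains to fit (2) into the equivalence, by proving $(3) \Rightarrow (2)$ and $(2) \Rightarrow (1)$. The key tool is a \emph{southeast-domination} lemma: starting from any box of $D(w)$ and repeatedly moving down or right while staying in $D(w)$, one terminates at a box both of whose eastern and southern neighbors are absent, i.e.\ at an element of $E(w)$; thus every box is weakly dominated to the southeast by an essential box. For $(3) \Rightarrow (2)$ I would argue contrapositively that two essential boxes in strict northwest--southeast position produce a $2143$ (equivalently, an incomparable pair of rows), and for $(2) \Rightarrow (1)$ that a $2143$ can be pushed, via domination, onto two essential boxes that remain in strict northwest--southeast position.

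The delicate point, and the step I expect to cost the most work, is exactly this interaction between essentiality and the pattern, which is genuinely \emph{non-local}. The naive hope that the rows $p$ and $i$ of an essential pair $(p,q),(i,j)$ with $p<i$, $q<j$ are themselves incomparable is false: one can have $(p,j),(i,q)\in D(w)$ with $R_i \subseteq R_p$ (this already occurs for $w = 31275\overline{6}4$-type examples, where an essential pair sits over a nested pair of rows), so the witnessing $2143$ must be located among \emph{other} rows and columns. I would therefore handle $(2)\Rightarrow(1)$ by selecting an \emph{extremal} occurrence of the pattern, for instance maximizing the southeast box $(c,w(d))$ in a suitable order, so that the emptiness of the two ``cross'' cells forces the associated diagram boxes to already be essential, and by controlling the domination paths so that strict northwest--southeast comparability survives. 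Should this bookkeeping become too intricate, the fallback is to invoke Fulton's rank-function description of $E(w)$, under which the chain condition (2) is precisely the statement that the defining rank conditions are nested; but the self-contained route above is the one I would attempt first.
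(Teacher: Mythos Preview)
The paper does not give its own proof of this proposition: it is quoted as background from Fulton (with a pointer to Knutson--Miller--Yong) and never argued in the text. So there is no in-paper argument to compare your proposal against; anything correct that you write here already goes beyond what the paper does.

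On the substance: your treatment of $(1)\Leftrightarrow(3)$ is correct. The rearrangement lemma---``$D(w)$ can be permuted to a Young diagram iff the row sets $R_p$ form a chain under inclusion''---holds for exactly the reason you give, and your dictionary between incomparable rows and $2143$ occurrences checks out line by line. From $q\in R_p\setminus R_i$, $j\in R_i\setminus R_p$ and $p<i$ one really does get $q<w(p)<j<w(i)$ and $p<w^{-1}(q)<i<w^{-1}(j)$, hence a $2143$ at positions $p,\,w^{-1}(q),\,i,\,w^{-1}(j)$; and the converse produces the incomparable pair $R_a,R_c$ just as you say.

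Your self-diagnosis of the essential-set step is also accurate, and this is where the proposal is genuinely incomplete. The southeast-domination lemma is fine as a mechanism for locating essential boxes, but neither contrapositive goes through naively: an essential pair $(p,q),(i,j)$ with $p<i$, $q<j$ can sit over nested rows $R_i\subseteq R_p$ (so rows $p,i$ themselves do not witness incomparability), and conversely a $2143$, once pushed to essential boxes by domination, need not land in strictly northwest--southeast position without a careful extremal choice that you have not specified. This is exactly the ``non-local'' difficulty you flag. Fulton's original argument sidesteps it by passing to the rank function $r_w(p,q)=\#\{a\le p: w(a)\le q\}$: essential boxes are the southeast corners of the regions where $r_w$ is constant, and condition~(2) becomes the statement that these corner rank conditions are totally ordered, which is visibly equivalent to the chain condition on rows (and columns). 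If your goal is a self-contained write-up, I would take that rank-function route rather than try to make the extremal-$2143$ bookkeeping airtight.
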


Note that we can compute $\lambda_w$ by ordering the sequence whose $i$-th value is the number of boxes in row $i$ of $D(w)$.
We call $\lambda_w$ the \defn{partition associated to $w$}.
Next, let $\Lambda_w$ denote the smallest partition whose Young diagram contains the boxes $D(w)$.
Following~\cite[Sec.~5.2]{KMY09} (see also~\cite{Fulton92}), we define a sequence $F_w = (F_i)_{i=1}^{\ell(\lambda_w)}$ by $F_i$ denoting the row index of the southeastern box of $\Lambda_w$ that lies on the same diagonal as the last box $(i, \lambda_i)$ in row $i$ of $\lambda = \lambda_w$.
We call $F_w$ the \defn{flagging associated to $w$}.

\subsection{Divided difference operators}

Define $x \oplus y := x + y + \beta x y$.
Let $f \in \ZZ[\xx]$.
The \defn{$i$-th (Newton) divided difference operator} is defined as
\[
\partial_i f := \dfrac{f - s_i f}{x_i - x_{i+1}},
\]
where we recall that $s_i$ acts by swapping the variables $x_i \leftrightarrow x_{i+1}$.
The \defn{K-theoretic divided difference operator} is
\[
\Kdd_i f := \partial_i \bigl( (1 + \beta x_{i+1}) f \bigr) = \frac{(1 + \beta x_{i+1}) f - (1 + \beta x_i) s_i f}{x_i - x_{i+1}}.
\]
These are normally denoted by $\pi_i$ in the literature, but we choose a different notation in order to avoid confusion with the common notation for the  \defn{$i$-th Demazure operator}, which is defined as
\[
\pi_i f := \partial_i (x_i f) = \frac{x_i f - x_{i+1} s_i f}{x_i - x_{i+1}}.
\]
We also require the \defn{Demazure--Lascoux operator} and the \defn{Demazure--Lascoux atom operator}
\[
\varpi_i f := \pi_i\bigl( (1 + \beta x_{i+1}) f \bigr),
\hspace{60pt}
\overline{\varpi}_i := \varpi_i - 1.
\]
For any operator $D_i = \partial_i, \Kdd_i, \pi_i, \varpi_i, \overline{\varpi}_i$, define
$
D_w := D_{i_1} \dotsm D_{i_k}
$
for any reduced expression $w = s_{i_1} \cdots s_{i_k}$, which is well-defined since $D_i$ satisfies the braid relations:
\[
D_i D_{i+1} D_i = D_{i+1} D_i D_{i+1},
\qquad\qquad
D_i D_j = D_j D_i
\quad \text{for } \abs{i - j} > 1.
\]

\subsection{Grothendieck polynomials}

Let $w \in S_n$ be a permutation.
Following~\cite{FK94}, the \defn{double Grothendieck polynomial} is defined recursively by
\[
\G_{w_0}(\xx, \yy; \beta) := \prod_{i + j \leq n} x_i \oplus y_j,
\qquad\quad
\G_w(\xx, \yy; \beta) := \Kdd_i \G_{w s_i}(\xx, \yy; \beta), \quad \text{for } \ell(w s_i) = \ell(w) + 1.
\]

A \defn{set-valued tableau of shape $\lambda$} is a filling of the boxes of $\lambda$ with non-empty (finite) subsets of $\{1, 2, \dotsc, n\}$ that are weakly increasing along rows and strictly increasing along columns in the following sense:
\[
\ytableaushort{AB,C}\,,
\qquad\qquad
\max A \leq \min B,
\qquad
\max A < \min C.
\]
Let $\svt_{\lambda}$ denote the set of set-valued tableaux of shape $\lambda$ (where each entry is a subset of $\{1, \dotsc, n\}$).
For $T$ a set-valued tableau, we write $A \in T$ to mean that $A$ is an entry (which is a set) in one of the boxes of $T$.
Let $F = \{F_i\}_{i=1}^{\ell(\lambda)}$ be a (finite) sequence of integers.
Let $\svt_{\lambda,F}$ denote the subset of set-valued tableaux of shape $\lambda$ such that every value in the $i$-th row of $T$ is at most $F_i$.
We call a set-valued tableau in $\svt_{\lambda,F}$ a \defn{flagged set-valued tableau}.

Following~\cite{McNamara06}, the \defn{factorial Grothendieck polynomial} is defined by
\[
\G_{\lambda}(\xx | \yy; \beta) := \sum_{T \in \svt_{\lambda}} \beta^{\abs{T} - \abs{\lambda}} \prod_{A \in T} \prod_{i \in A} x_i \oplus y_{i + c(A)},
\]
where $c(A) = c - r$ is the \defn{content} of the box $A$ (which we have equated with its entry) in row $r$ and column $c$.
A determinant formula for the factorial Grothendieck polynomials was given in~\cite{IN13,IS14}.
Following~\cite{KMY09}, define the \defn{flagged factorial Grothendieck polynomial} as
\[
\G_{\lambda,F}(\xx | \yy; \beta) := \sum_{T \in \svt_{\lambda,F}} \beta^{\abs{T} - \abs{\lambda}} \prod_{A \in T} \prod_{i \in A} x_i \oplus y_{i + c(A)}.
\]
A determinant formula for $\G_{\lambda,F}(\xx, \yy; \beta)$ was given in~\cite{MS19}.

\begin{theorem}[{\cite[Thm.~5.8]{KMY09}}]
\label{thm:vexillary_flagged}
Let $w$ be a vexillary permutation. Then we have
\[
\G_w(\xx, \yy; \beta) = \G_{\lambda_w,F_w}(\xx | \yy; \beta).
\]
\end{theorem}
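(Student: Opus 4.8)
The plan is to realize both sides as the partition function of a single integrable lattice model. By Theorem~\ref{thm:5vertex_atoms}, the double Grothendieck polynomial $\G_w$ equals, up to an explicit power of $\beta$, the partition function of a colored six-vertex model. The strategy is to transform this model, when $w$ is vexillary, into a five-vertex model whose states biject with $\svt_{\lambda_w, F_w}$, so that summing the Boltzmann weights reproduces the defining sum of $\G_{\lambda_w, F_w}(\xx \mid \yy; \beta)$.

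First I would pass to the \emph{semidual} model, obtained by swapping $0 \leftrightarrow 1$ along each horizontal (auxiliary) line. The Yang--Baxter equation for the semidual $R$-matrix shows this swap leaves the partition function unchanged, so the semidual model still computes $\G_w$. To encode $w$, I would place the model not on a rectangle but on the staircase grid cut out by $\Lambda_w$. Here Proposition~\ref{prop:vexillary_classification} is essential: for vexillary $w$ the boxes $D(w)$ reorder to the partition $\lambda_w$ and no two essential boxes lie one strictly southeast of the other, which is exactly the combinatorial condition ensuring that (i) restricting to $\Lambda_w$ does not alter the partition function and (ii) one of the six vertex configurations can no longer occur. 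The model thus collapses to a five-vertex model; moreover the colors propagate without crossing, so after a gauge transformation that removes a $\beta$ factor from one vertex and a reflection symmetry of the lattice, the weights become those of the Motegi--Sakai model~\cite{MS13,Motegi20}.

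It then remains to match states. Under the bijection of~\cite[Sec.~4.2]{MPS18} between admissible (marked) states of the Motegi--Sakai model and set-valued tableaux, the constraint of working on $\Lambda_w$ instead of a full rectangle becomes precisely the row bound defining a flagged set-valued tableau: the southeastern boundary of $\Lambda_w$ caps the entries of row $i$ by the $i$-th term of $F_w$. Tracking the spectral parameters, the Boltzmann weight of a state equals $\beta^{\abs{T} - \abs{\lambda_w}} \prod_{A \in T} \prod_{i \in A} (x_i \oplus y_{i + c(A)})$ for the corresponding $T \in \svt_{\lambda_w, F_w}$, where the content shift $c(A)$ is produced by the diagonal placement of the spectral parameters. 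Summing over states gives $\G_{\lambda_w, F_w}(\xx \mid \yy; \beta)$, and combined with the first step this yields $\G_w = \G_{\lambda_w, F_w}$.

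The main obstacle is the geometric heart of the second paragraph: proving rigorously that reshaping the grid to $\Lambda_w$ preserves the partition function and that the row caps read off from the boundary of $\Lambda_w$ coincide with the flagging $F_w$ defined through the diagonals of $\lambda_w$. This demands a careful analysis, via Proposition~\ref{prop:vexillary_classification}, of which boundary vertices survive and of how the essential-set condition constrains them; by contrast, the bookkeeping of $\beta$-powers and content shifts is a routine comparison once the bijection is in place.
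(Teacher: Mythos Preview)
Your proposal is correct and follows essentially the same route as the paper: pass from $\Gstates_w$ to the semidual model via the $0\leftrightarrow 1$ swap on horizontal edges (Proposition~\ref{prop:map_semidual}), restrict to $\Lambda_w$ using the vexillary hypothesis (the paper invokes \cite[Lemma~7.2]{Weigandt20}, which it notes is a translation of Proposition~\ref{prop:vexillary_classification}), drop the now-absent crossing vertex to obtain a five-vertex model, strip the $\beta^{\ell(w)}$ factor by reweighting $\overline{\tt{b}}_2$, and biject with $\svt_{\lambda_w,F_w}$ via the marked-state/Gelfand--Tsetlin correspondence. One small correction: the swap $\Phi$ preserves the partition function simply because it is a vertex-by-vertex weight-preserving relabeling, not because of the Yang--Baxter equation; integrability of the semidual model is used later, not for this step.
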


When we take $\beta = 0$ in Theorem~\ref{thm:vexillary_flagged}, we recover the corresponding result in~\cite{LS82} for Schubert polynomials (and $\yy \mapsto -\yy$) via a Jacobi--Trudi-type determinant formula (see also~\cite{Wachs85,KM05}).

\subsection{Colored models}
\label{sec:colored_model}

We give the colored model from~\cite{BSW20} but with the double Grothendieck substitution $x \mapsto x \oplus y$ (\textit{i.e.}, we perform a gauge transformation).

\begin{figure}
\[
\begin{array}{c@{\;\;}c@{\;\;}c@{\;\;}c@{\;\;}c@{\;\;}c@{\;\;}c}
\toprule
  \tt{u}_1&\tt{u}_2&\tt{u}^{\dagger}_2&\tt{u}^{\circ}_2&\tt{v}_2&\tt{w}_1&\tt{w}_2\\
\midrule
\begin{tikzpicture}
\coordinate (a) at (-.75, 0);
\coordinate (b) at (0, .75);
\coordinate (c) at (.75, 0);
\coordinate (d) at (0, -.75);
\coordinate (aa) at (-.75,.5);
\coordinate (cc) at (.75,.5);
\draw (a)--(0,0);
\draw (b)--(0,0);
\draw (c)--(0,0);
\draw (d)--(0,0);
\draw[fill=white] (a) circle (.25);
\draw[fill=white] (b) circle (.25);
\draw[fill=white] (c) circle (.25);
\draw[fill=white] (d) circle (.25);
\node at (0,1) { };
\node at (a) {$0$};
\node at (b) {$0$};
\node at (c) {$0$};
\node at (d) {$0$};
\path[fill=white] (0,0) circle (.2);
\node at (0,0) {$x$};
\end{tikzpicture}
& \begin{tikzpicture}
\coordinate (a) at (-.75, 0);
\coordinate (b) at (0, .75);
\coordinate (c) at (.75, 0);
\coordinate (d) at (0, -.75);
\coordinate (aa) at (-.75,.5);
\coordinate (cc) at (.75,.5);
\draw[line width=0.5mm, blue] (c)--(0,0);
\draw[line width=0.5mm, blue] (b)--(0,0);
\draw[line width=0.5mm, red] (a)--(0,0);
\draw[line width=0.5mm, red] (d)--(0,0);
\draw[line width=0.5mm, blue,fill=white] (c) circle (.25);
\draw[line width=0.5mm, blue,fill=white] (b) circle (.25);
\draw[line width=0.5mm, red, fill=white] (a) circle (.25);
\draw[line width=0.5mm, red, fill=white] (d) circle (.25);
\node at (0,1) { };
\node at (c) {$c'$};
\node at (b) {$c'$};
\node at (a) {$c$};
\node at (d) {$c$};
\path[fill=white] (0,0) circle (.2);
\node at (0,0) {$x$};
\end{tikzpicture}
& \begin{tikzpicture}
\coordinate (a) at (-.75, 0);
\coordinate (b) at (0, .75);
\coordinate (c) at (.75, 0);
\coordinate (d) at (0, -.75);
\coordinate (aa) at (-.75,.5);
\coordinate (cc) at (.75,.5);
\draw[line width=0.5mm, blue] (a)--(0,0);
\draw[line width=0.6mm, red] (b)--(0,0);
\draw[line width=0.5mm, blue] (c)--(0,0);
\draw[line width=0.6mm, red] (d)--(0,0);
\draw[line width=0.5mm, blue,fill=white] (a) circle (.25);
\draw[line width=0.5mm, red,fill=white] (b) circle (.25);
\draw[line width=0.5mm, blue, fill=white] (c) circle (.25);
\draw[line width=0.5mm, red, fill=white] (d) circle (.25);
\node at (0,1) { };
\node at (a) {$c'$};
\node at (b) {$c$};
\node at (c) {$c'$};
\node at (d) {$c$};
\path[fill=white] (0,0) circle (.2);
\node at (0,0) {$x$};
\end{tikzpicture}
& \begin{tikzpicture}
\coordinate (a) at (-.75, 0);
\coordinate (b) at (0, .75);
\coordinate (c) at (.75, 0);
\coordinate (d) at (0, -.75);
\coordinate (aa) at (-.75,.5);
\coordinate (cc) at (.75,.5);
\draw[line width=0.5mm, UQgold] (a)--(0,0);
\draw[line width=0.6mm, UQgold] (b)--(0,0);
\draw[line width=0.5mm, UQgold] (c)--(0,0);
\draw[line width=0.6mm, UQgold] (d)--(0,0);
\draw[line width=0.5mm, UQgold,fill=white] (a) circle (.25);
\draw[line width=0.5mm, UQgold,fill=white] (b) circle (.25);
\draw[line width=0.5mm, UQgold, fill=white] (c) circle (.25);
\draw[line width=0.5mm, UQgold, fill=white] (d) circle (.25);
\node at (0,1) { };
\node at (a) {$d$};
\node at (b) {$d$};
\node at (c) {$d$};
\node at (d) {$d$};
\path[fill=white] (0,0) circle (.2);
\node at (0,0) {$x$};
\end{tikzpicture}
& \begin{tikzpicture}
\coordinate (a) at (-.75, 0);
\coordinate (b) at (0, .75);
\coordinate (c) at (.75, 0);
\coordinate (d) at (0, -.75);
\coordinate (aa) at (-.75,.5);
\coordinate (cc) at (.75,.5);
\draw[line width=0.5mm, UQgold] (a)--(0,0);
\draw(b)--(0,0);
\draw[line width=0.5mm, UQgold] (c)--(0,0);
\draw (d)--(0,0);
\draw[line width=0.5mm,UQgold,fill=white] (a) circle (.25);
\draw[fill=white] (b) circle (.25);
\draw[line width=0.5mm,UQgold,fill=white] (c) circle (.25);
\draw[fill=white] (d) circle (.25);
\node at (0,1) { };
\node at (a) {$d$};
\node at (b) {$0$};
\node at (c) {$d$};
\node at (d) {$0$};
\path[fill=white] (0,0) circle (.2);
\node at (0,0) {$x$};
\end{tikzpicture}
& \begin{tikzpicture}
\coordinate (a) at (-.75, 0);
\coordinate (b) at (0, .75);
\coordinate (c) at (.75, 0);
\coordinate (d) at (0, -.75);
\coordinate (aa) at (-.75,.5);
\coordinate (cc) at (.75,.5);
\draw[line width=0.5mm, UQgold] (c)--(0,0);
\draw[line width=0.6mm, UQgold] (b)--(0,0);
\draw (a)--(0,0);
\draw (d)--(0,0);
\draw[line width=0.5mm,UQgold,fill=white] (c) circle (.25);
\draw[line width=0.5mm,UQgold,fill=white] (b) circle (.25);
\draw[fill=white] (a) circle (.25);
\draw[fill=white] (d) circle (.25);
\node at (0,1) { };
\node at (c) {$d$};
\node at (b) {$d$};
\node at (a) {$0$};
\node at (d) {$0$};
\path[fill=white] (0,0) circle (.2);
\node at (0,0) {$x$};
\end{tikzpicture}
& \begin{tikzpicture}
\coordinate (a) at (-.75, 0);
\coordinate (b) at (0, .75);
\coordinate (c) at (.75, 0);
\coordinate (d) at (0, -.75);
\coordinate (aa) at (-.75,.5);
\coordinate (cc) at (.75,.5);
\draw (c)--(0,0);
\draw (b)--(0,0);
\draw[line width=0.5mm, UQgold] (a)--(0,0);
\draw[line width=0.5mm, UQgold] (d)--(0,0);
\draw[fill=white] (c) circle (.25);
\draw[fill=white] (b) circle (.25);
\draw[line width=0.5mm,UQgold, fill=white] (a) circle (.25);
\draw[line width=0.5mm,UQgold, fill=white] (d) circle (.25);
\node at (0,1) { };
\node at (c) {$0$};
\node at (b) {$0$};
\node at (a) {$d$};
\node at (d) {$d$};
\path[fill=white] (0,0) circle (.2);
\node at (0,0) {$x$};
\end{tikzpicture}
\\ 
\midrule
1 & 1 &1 & 1 & x \oplus y & 1 & 1+\beta (x \oplus y) \\
\bottomrule
\end{array}
\]
\caption{The colored Boltzmann weights with ${\color{red} c} > {\color{blue} c'}$ and ${\color{UQgold} d}$ being any color.}
\label{fig:colored_weights}
\end{figure}

We construct a \defn{(lattice) model} by first considering a rectangular grid of $n$ horizontal lines and $m$ vertical lines.
Each crossing of two lines will be a \defn{vertex} and the lines between two vertices (resp.\ from one vertex) are \defn{edges} (resp.\ \defn{half edges}).
Fix an $n$-tuple of colors $\cc = (c_1 > c_2 > \cdots > c_n > 0)$ and a permutation $w \in \sym{n}$.
Let $w \cc = (c_{w(1)}, c_{w(2)}, \dotsc, c_{w(n)})$ be the natural action of $w$ on the colors.
The \defn{boundary conditions}, a labeling of the half edges, are the bottom and left half edges are labeled by $0$, the right half edges are labeled by $w \cc$ from top-to-bottom, and the top half edges given by $\lambda$ with the $i$-th $1$ in the $01$-sequence of $\lambda$, counted from the left, having color $c_i$.
See~\cite[Fig.~6]{BSW20} for an example of a state with given boundary conditions, though note that in~\cite{BSW20}, the left boundary is colored as opposed to the right one. 
A \defn{state} is a labeling of the edges by $\{0\} \sqcup \cc$, and we call a state \defn{admissible} if the local configurations around each vertex are one of the configurations given by Figure~\ref{fig:colored_weights}.

We assign a non-zero \defn{(Boltzmann) weight} with \defn{spectral parameter} $x$ to each of the vertices as in Figure~\ref{fig:colored_weights}, and any other vertex will have a Boltzmann weight of $0$.
The \defn{(Boltzmann) weight} $\wt(S)$ of a state $S$ is the product of all of the Boltzmann weights of all vertices with $x = x_i$ in the $i$-th row numbered starting from top and $y = y_j$ in the $j$-th column numbered starting from the left.
Note that a state $S$ is admissible if and only if $\wt(S) \neq 0$.
Let $\overline{\states}_{\lambda,w}$ denote the set of all possible admissible states for this model.
We will use $\overline{\states}_{\lambda,w}$ to refer to the model in our formulas.
The \defn{partition function} of the model $\overline{\states}_{\lambda,w}$ is
\[
Z(\overline{\states}_{\lambda,w}; \xx, \yy; \beta) := \sum_{S \in \overline{\states}_{\lambda,w}} \wt(S),
\]
the sum of the Boltzmann weights of all possible (admissible) states of the model.

\begin{remark}
\label{remark:gauge}
The Boltzmann weights of an admissible configuration around a vertex can be considered as an \defn{$L$-matrix} $L \in \End( W_a \otimes V_j )$, where $W_a = \CC^{n+1}$ and $V_j = \CC^{n+1}$ can be thought of as representations of a certain quantum group.
For more information on the relation between lattice models and quantum groups see~\cite[Ch.~7.5]{CPbook}. 
\end{remark}

From the description of the vertex weights, we can map a state in $\states_{\lambda,w}$ to a wiring diagram of $w$, where the different strands are represented by different colors.
Indeed, we can think of the configurations $\tt{v}_2$, $\tt{w}_1$, and $\tt{w}_2$ in Figure~\ref{fig:colored_weights} as a single strand passing through the vertex (possibly turning), $\tt{u}_2^{\dagger}$ as two strands crossing at the vertex (thus corresponding to a simple transposition), and $\tt{u}_1$ as two strands both passing near the vertex but not crossing.
Note that $\tt{u}_2^{\circ}$ does not appear in the model.
From the requirement ${\color{red} c} > {\color{blue} c'}$ in $\tt{u}_2^{\dagger}$, we see that any two strands can only cross at most once.

\begin{remark}
In our model, we color the right side and follow the convention of the model in~\cite{BBBG19} (see also~\cite[Rem.~3.1]{BSW20}).
Note that the weights are reflected along the vertical axis from~\cite{BSW20}, which also has the left side colored.
The model we recall here also differs from~\cite{BSW20} by $w \mapsto w_0 w$.
Thus $\overline{\states}_{\lambda, w}$ is the wiring diagram of $w$ (unlike in~\cite{BSW20}, where it corresponds to $w_0 w$).
We do this in order to match the conventions of Grothendieck polynomials and bumpless pipe dreams from~\cite{Weigandt20}.
\end{remark}

\begin{remark}
\label{rem:specialized_models}
Note that when $\yy = 0$, we have the colored model from~\cite{BSW20}, which recovers the model of~\cite{BBBG19} when we additionally set $\beta = 0$.
If we restrict to a single color, which we call the \defn{uncolored model}, then our $L$-matrix is the one from~\cite[Sec.~2]{Motegi20} with an appropriate gauge transformation and reflected over the vertical axis at $q = 0$, which is also the $L'$-matrix given in~\cite[Fig.~3.1]{GK17} after swapping $1 \leftrightarrow 0$.
This further restricts to the model of Motegi--Sakai~\cite{MS13} (also with an appropriate gauge transformation) by taking $\yy = 0$.
\end{remark}

\begin{figure}
\[
\begin{array}{c@{\hspace{20pt}}c@{\hspace{20pt}}c@{\hspace{20pt}}c}
\toprule
\begin{tikzpicture}[scale=0.7]
\draw (0,0) to [out = 0, in = 180] (2,2);
\draw (0,2) to [out = 0, in = 180] (2,0);
\draw[fill=white] (0,0) circle (.35);
\draw[fill=white] (0,2) circle (.35);
\draw[fill=white] (2,0) circle (.35);
\draw[fill=white] (2,2) circle (.35);
\node at (0,0) {$0$};
\node at (0,2) {$0$};
\node at (2,2) {$0$};
\node at (2,0) {$0$};
\path[fill=white] (1,1) circle (.3);
\node at (1,1) {$x_i,x_j$};
\end{tikzpicture}&
\begin{tikzpicture}[scale=0.7]
\draw (0,0) to [out = 0, in = 180] (2,2);
\draw (0,2) to [out = 0, in = 180] (2,0);
\draw[fill=white] (0,0) circle (.35);
\draw[line width=0.5mm, UQgold, fill=white] (0,2) circle (.35);
\draw[line width=0.5mm, UQgold, fill=white] (2,2) circle (.35);
\draw[fill=white] (2,0) circle (.35);
\node at (0,0) {$0$};
\node at (0,2) {$d$};
\node at (2,2) {$d$};
\node at (2,0) {$0$};
\path[fill=white] (1,1) circle (.3);
\node at (1,1) {$x_i,x_j$};
\end{tikzpicture}&
\begin{tikzpicture}[scale=0.7]
\draw (0,0) to [out = 0, in = 180] (2,2);
\draw (0,2) to [out = 0, in = 180] (2,0);
\draw[fill=white] (0,2) circle (.35);
\draw[line width=0.5mm, UQgold, fill=white] (0,0) circle (.35);
\draw[fill=white] (2,0) circle (.35);
\draw[line width=0.5mm, UQgold, fill=white] (2,2) circle (.35);
\node at (0,0) {$d$};
\node at (0,2) {$0$};
\node at (2,2) {$d$};
\node at (2,0) {$0$};
\path[fill=white] (1,1) circle (.3);
\node at (1,1) {$x_i,x_j$};
\end{tikzpicture}&
\begin{tikzpicture}[scale=0.7]
\draw (0,0) to [out = 0, in = 180] (2,2);
\draw (0,2) to [out = 0, in = 180] (2,0);
\draw[line width=0.5mm, UQgold, fill=white] (0,0) circle (.35);
\draw[fill=white] (0,2) circle (.35);
\draw[fill=white] (2,2) circle (.35);
\draw[line width=0.5mm, UQgold, fill=white] (2,0) circle (.35);
\node at (0,0) {$d$};
\node at (0,2) {$0$};
\node at (2,2) {$0$};
\node at (2,0) {$d$};
\path[fill=white] (1,1) circle (.3);
\node at (1,1) {$x_i,x_j$};
\end{tikzpicture}\\
   \midrule
 (1 + \beta Z_i) Z_j & (1+ \beta Z_i) Z_j & (Z_j - Z_i) Z_j & (1 + \beta Z_j) Z_j \\
   \midrule
\begin{tikzpicture}[scale=0.7]
\draw (0,0) to [out = 0, in = 180] (2,2);
\draw (0,2) to [out = 0, in = 180] (2,0);
\draw[line width=0.5mm, blue, fill=white] (0,0) circle (.35);
\draw[line width=0.5mm, red, fill=white] (0,2) circle (.35);
\draw[line width=0.5mm, red, fill=white] (2,2) circle (.35);
\draw[line width=0.5mm, blue, fill=white] (2,0) circle (.35);
\node at (0,0) {$c'$};
\node at (0,2) {$c$};
\node at (2,2) {$c$};
\node at (2,0) {$c'$};
\path[fill=white] (1,1) circle (.3);
\node at (1,1) {$x_i,x_j$};
\end{tikzpicture}&
\begin{tikzpicture}[scale=0.7]
\draw (0,0) to [out = 0, in = 180] (2,2);
\draw (0,2) to [out = 0, in = 180] (2,0);
\draw[line width=0.5mm, red, fill=white] (0,0) circle (.35);
\draw[line width=0.5mm, blue, fill=white] (0,2) circle (.35);
\draw[line width=0.5mm, blue, fill=white] (2,2) circle (.35);
\draw[line width=0.5mm, red, fill=white] (2,0) circle (.35);
\node at (0,0) {$c$};
\node at (0,2) {$c'$};
\node at (2,2) {$c'$};
\node at (2,0) {$c$};
\path[fill=white] (1,1) circle (.3);
\node at (1,1) {$x_i,x_j$};
\end{tikzpicture}&
\begin{tikzpicture}[scale=0.7]
\draw (0,0) to [out = 0, in = 180] (2,2);
\draw (0,2) to [out = 0, in = 180] (2,0);
\draw[line width=0.5mm, blue, fill=white] (0,0) circle (.35);
\draw[line width=0.5mm, red, fill=white] (0,2) circle (.35);
\draw[line width=0.5mm, blue, fill=white] (2,2) circle (.35);
\draw[line width=0.5mm, red, fill=white] (2,0) circle (.35);
\node at (0,0) {$c_h$};
\node at (0,2) {$c$};
\node at (2,2) {$c'$};
\node at (2,0) {$c$};
\path[fill=white] (1,1) circle (.3);
\node at (1,1) {$x_i,x_j$};
\end{tikzpicture}&
\begin{tikzpicture}[scale=0.7]
\draw (0,0) to [out = 0, in = 180] (2,2);
\draw (0,2) to [out = 0, in = 180] (2,0);
\draw[line width=0.5mm, UQgold, fill=white] (0,0) circle (.35);
\draw[line width=0.5mm, UQgold, fill=white] (0,2) circle (.35);
\draw[line width=0.5mm, UQgold, fill=white] (2,2) circle (.35);
\draw[line width=0.5mm, UQgold, fill=white] (2,0) circle (.35);
\node at (0,0) {$d$};
\node at (0,2) {$d$};
\node at (2,2) {$d$};
\node at (2,0) {$d$};
\path[fill=white] (1,1) circle (.3);
\node at (1,1) {$x_i,x_j$};
\end{tikzpicture}\\
   \midrule
 (1 + \beta Z_j) Z_i & (1 + \beta Z_i) Z_j & Z_j - Z_i & (1 + \beta Z_i) Z_j \\
   \bottomrule
\end{array}
\]
\caption{The colored $R$-matrix with ${\color{red} c} > {\color{blue} c'}$ and ${\color{UQgold} d}$ being any color, where $Z_i = x_i \oplus y$ and $Z_j = x_j \oplus y'$. Note that the weights are not symmetric with respect to color.}
\label{fig:colored_R_matrix}
\end{figure}

The $L$-matrix from Figure~\ref{fig:colored_weights} and the $R$-matrix from Figure~\ref{fig:colored_R_matrix} satisfy the $RLL$ form of the \defn{Yang--Baxter equation}.
Thus, this is an \defn{integrable} model.

\begin{proposition}[{\cite[Prop.~3.2]{BSW20}}]
\label{prop:YBE}
The partition function of the following two models are equal for any boundary conditions $a,b,c,d,e,f \in \{ 0, c_1, c_2, \dotsc, c_n \}$:
\begin{equation}
\label{eq:RLL_relation}
\begin{tikzpicture}[baseline=(current bounding box.center)]
  \draw (0,1) to [out = 0, in = 180] (2,3) to (4,3);
  \draw (0,3) to [out = 0, in = 180] (2,1) to (4,1);
  \draw (3,0) to (3,4);
  \draw[fill=white] (0,1) circle (.3);
  \draw[fill=white] (0,3) circle (.3);
  \draw[fill=white] (3,4) circle (.3);
  \draw[fill=white] (4,3) circle (.3);
  \draw[fill=white] (4,1) circle (.3);
  \draw[fill=white] (3,0) circle (.3);
  \node at (0,1) {$a$};
  \node at (0,3) {$b$};
  \node at (3,4) {$c$};
  \node at (4,3) {$d$};
  \node at (4,1) {$e$};
  \node at (3,0) {$f$};
\path[fill=white] (3,3) circle (.3);
\node at (3,3) {$x_i$};
\path[fill=white] (3,1) circle (.3);
\node at (3,1) {$x_j$};
\path[fill=white] (1,2) circle (.3);
\node at (1,2) {$x_i,x_j$};
\end{tikzpicture}\qquad\qquad
\begin{tikzpicture}[baseline=(current bounding box.center)]
  \draw (0,1) to (2,1) to [out = 0, in = 180] (4,3);
  \draw (0,3) to (2,3) to [out = 0, in = 180] (4,1);
  \draw (1,0) to (1,4);
  \draw[fill=white] (0,1) circle (.3);
  \draw[fill=white] (0,3) circle (.3);
  \draw[fill=white] (1,4) circle (.3);
  \draw[fill=white] (4,3) circle (.3);
  \draw[fill=white] (4,1) circle (.3);
  \draw[fill=white] (1,0) circle (.3);
  \node at (0,1) {$a$};
  \node at (0,3) {$b$};
  \node at (1,4) {$c$};
  \node at (4,3) {$d$};
  \node at (4,1) {$e$};
  \node at (1,0) {$f$};
\path[fill=white] (1,3) circle (.3);
\node at (1,3) {$x_j$};
\path[fill=white] (1,1) circle (.3);
\node at (1,1) {$x_i$};
\path[fill=white] (3,2) circle (.3);
\node at (3,2) {$x_i,x_j$};
\end{tikzpicture}
\end{equation}
\end{proposition}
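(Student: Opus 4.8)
The plan is to deduce \eqref{eq:RLL_relation} from its bare-parameter counterpart, the Yang--Baxter equation established in \cite[Prop.~3.2]{BSW20}, rather than reverify it by hand. For each fixed choice of boundary labels $a,b,c,d,e,f$, the asserted equality of the two partition functions is a single polynomial identity in the spectral parameters carried by the three lines. In \cite{BSW20} these identities hold with bare spectral parameters on the horizontal (auxiliary) lines, and our $L$- and $R$-matrices in Figures~\ref{fig:colored_weights} and~\ref{fig:colored_R_matrix} are obtained from theirs by the substitution $x \mapsto x \oplus y$ on each horizontal line, which is exactly the gauge transformation recorded at the start of Section~\ref{sec:colored_model}. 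Because a polynomial identity valid for all values of a variable remains valid after specializing that variable to any expression, substituting $x_i \mapsto Z_i = x_i \oplus y$ and $x_j \mapsto Z_j = x_j \oplus y'$ into the identities of \cite{BSW20} yields precisely \eqref{eq:RLL_relation} for our weights.

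To carry this out I would first confirm that the weights of Figures~\ref{fig:colored_weights} and~\ref{fig:colored_R_matrix} are literally the $x\mapsto x\oplus y$ specializations of the \cite{BSW20} matrices: for the $L$-matrix this is immediate, since only the weights $x\oplus y$ and $1+\beta(x\oplus y)$ differ from the bare values $x$ and $1+\beta x$, and for the $R$-matrix one checks that every entry is a function of $Z_i, Z_j$ alone and agrees with \cite{BSW20} under $x_i\mapsto Z_i$, $x_j\mapsto Z_j$. The cleanest way to see that the specialization is legitimate is to pass to the multiplicative coordinate $1+\beta x$: since $1+\beta(x\oplus y) = (1+\beta x)(1+\beta y)$, the substitution simply rescales $1+\beta x_i$ by the column factor $1+\beta y$, and such a uniform rescaling of the multiplicative spectral parameter manifestly preserves the $RLL$ relation.

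The main obstacle is the bookkeeping of which $y$-parameters occur where: although the $R$-matrix of Figure~\ref{fig:colored_R_matrix} is stated for general $y,y'$, the instance relevant to \eqref{eq:RLL_relation} is the one in which these agree with the column parameter shared by the two $L$-matrix vertices on the single vertical line, so that the substitution is applied consistently across all three lines. One must also keep track of the color-asymmetry of the $R$-matrix weights, which is genuine (as the caption of Figure~\ref{fig:colored_R_matrix} warns). Once this uniformity is verified, no computation remains.

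Should a self-contained argument be preferred, I would instead prove \eqref{eq:RLL_relation} by a finite case check. Color conservation through each vertex forces both partition functions to vanish unless the multiset of colors entering from the left and bottom equals the multiset leaving from the right and top, and the fact that the weights depend only on whether two colors coincide and, if not, on their relative order reduces the verification to configurations involving at most three distinct nonzero colors, whose relative order is all that matters. In each reduced case one enumerates the admissible interior edge-labelings on the two sides of \eqref{eq:RLL_relation}, forms the product of the weights from Figures~\ref{fig:colored_weights} and~\ref{fig:colored_R_matrix}, and checks that the two resulting polynomials in $x_i, x_j, y, \beta$ coincide; here the case count, rather than any individual computation, is the real work.
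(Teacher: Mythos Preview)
Your proposal is correct and matches the paper's treatment: the paper does not give an explicit proof but cites \cite[Prop.~3.2]{BSW20} directly, having already noted at the start of Section~\ref{sec:colored_model} that the present $L$- and $R$-matrices are obtained from those of \cite{BSW20} by the gauge substitution $x\mapsto x\oplus y$, which is exactly your first argument. Your alternative finite case check is the content of Remark~\ref{remark:finite_colors}, which observes that color conservation reduces the identity to one involving at most three colors and hence to a verification of $2^4\times 2^4$ matrices.
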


\begin{remark}
\label{remark:finite_colors}
Note that the Yang--Baxter relation only involves at most $3$ colors, so Proposition~\ref{prop:YBE} reduces to an identity of $2^4 \times 2^4$ matrices since colors are conserved under the $R$-matrix and $L$-matrix.
\end{remark}


\begin{figure}
\begin{tikzpicture}[scale=0.7]
\begin{scope}[shift={(6,0)}]
  \draw (4,1) to (6,1) to [out = 0, in = 180] (8,3);
  \draw (4,3) to (6,3) to [out = 0, in = 180] (8,1);
  \draw (0,1) to (2,1);
  \draw (0,3) to (2,3);
  \draw (5,0.25) to (5,3.75);
  \draw (1,0.25) to (1,3.75);
  \draw[fill=white] (-0.3,3) circle (.55);
  \draw[fill=white] (-0.3,1) circle (.55);
  \draw[line width=0.5mm,blue,fill=white] (8.3,1) circle (.55);
  \draw[line width=0.5mm,red,fill=white] (8.3,3) circle (.55);
  \node at (3,1) {$\cdots$};
  \node at (3,3) {$\cdots$};
  \draw[densely dashed] (1,3.75) to (1,4.25);
  \draw[densely dashed] (1,0.25) to (1,-0.25);
  \draw[densely dashed] (5,3.75) to (5,4.25);
  \draw[densely dashed] (5,0.25) to (5,-0.25);
  \path[fill=white] (1,3) circle (.5);
  \node at (1,3) {\scriptsize$x_i$};
  \path[fill=white] (1,1) circle (.4);
  \node at (1,1) {\scriptsize$x_{i+1}$};
  \path[fill=white] (5,3) circle (.5);
  \node (a) at (5,3) {\scriptsize$x_i$};
  \path[fill=white] (5,1) circle (.4);
  \node at (5,1) {\scriptsize$x_{i+1}$};
  \path[fill=white] (7,2) circle (.3);
  \node at (7,2) {\scriptsize$x_i,x_{i+1}$};
  \node at (8.3,1) {$d_i$};
  \node at (8.3,3) {$d_{i+1}$};
  \node at (-0.3,1) {$0$};
  \node at (-0.3,3) {$0$};
\end{scope}
\begin{scope}[shift={(-6,0)}]
  \draw (0,1) to [out = 0, in = 180] (2,3) to (4,3);
  \draw (0,3) to [out = 0, in = 180] (2,1) to (4,1);
  \draw (3,0.25) to (3,3.75);
  \draw (7,0.25) to (7,3.75);
  \draw (6,1) to (8,1);
  \draw (6,3) to (8,3);
  \draw[line width=0.5mm,blue,fill=white] (8.3,1) circle (.55);
  \draw[line width=0.5mm,red,fill=white] (8.3,3) circle (.55);
  \draw[fill=white] (-0.3,3) circle (.55);
  \draw[fill=white] (-0.3,1) circle (.55);
  \node at (-0.3,1) {$0$};
  \node at (-0.3,3) {$0$};
  \node at (5,3) {$\cdots$};
  \node at (5,1) {$\cdots$};
  \draw[densely dashed] (3,3.75) to (3,4.25);
  \draw[densely dashed] (3,0.25) to (3,-0.25);
  \draw[densely dashed] (7,3.75) to (7,4.25);
  \draw[densely dashed] (7,0.25) to (7,-0.25);
  \node at (8.3,1) {$d_i$};
  \node at (8.3,3) {$d_{i+1}$};
\path[fill=white] (3,3) circle (.4);
\node at (3,3) {\scriptsize$x_{i+1}$};
\path[fill=white] (3,1) circle (.5);
\node at (3,1) {\scriptsize$x_i$};
\path[fill=white] (7,3) circle (.4);
\node at (7,3) {\scriptsize$x_{i+1}$};
\path[fill=white] (7,1) circle (.5);
\node at (7,1) {\scriptsize$x_i$};
\path[fill=white] (1,2) circle (.3);
\node at (1,2) {\scriptsize$x_i,x_{i+1}$};
\end{scope}
\end{tikzpicture}
\caption{The standard train argument described pictorially.
   Left: The model with an $R$-matrix attached on the left.
   Right: The model after repeatedly using the Yang--Baxter equation~\eqref{eq:RLL_relation}.}
\label{fig:train_argument}
\end{figure}

Using the integrability of the model and the standard train argument,\footnote{The standard train argument is simply repeatedly applying the Yang--Baxter equation to pass an $R$-matrix from the left to the right of a model with fixed boundary conditions; see Figure~\ref{fig:train_argument}.} we can derive a functional equation for the partition function as in~\cite[Lemma~3.3]{BSW20} using a fixed value $y_1 = y_2 = \cdots = y_m = y$ (which we succinctly write as $\yy = y$) in the left and right columns respectively:
\[
Z(\overline{\states}_{\lambda,s_i w}; \xx, y; \beta)
=
\dfrac{(1 + \beta x_i) (x_{i+1} \oplus y) \bigl( Z(\overline{\states}_{\lambda,w}; \xx, y; \beta) + Z(\overline{\states}_{\lambda,w}; s_i \xx, y; \beta) \bigr)}{x_i - x_{i+1}}.
\]
This is equal to the functional equation given by $\overline{\varpi}_i$.
Therefore, we have the following.

\begin{theorem}[{\cite[Thm.~3.4]{BSW20}}]
We have
\[
Z(\overline{\states}_{\lambda, w_0 w}; \xx, y; \beta) = \overline{L}_{w\lambda}(\xx; \beta, y) := \overline{\varpi}_w \prod_{i=1}^{\ell} (x_i \oplus y)^{\lambda_i}.
\]
\end{theorem}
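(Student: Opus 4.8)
The plan is to prove the identity by induction on $\ell(w)$, using the displayed functional equation as the inductive engine and a direct frozen-state computation for the base case. The two ingredients that make the induction well posed are already in hand: the operators $\overline{\varpi}_i$ satisfy the braid relations, so $\overline{\varpi}_w\prod_{i=1}^{\ell}(x_i\oplus y)^{\lambda_i}$ does not depend on the chosen reduced word for $w$; and the partition function $Z(\overline{\states}_{\lambda,u};\xx,y;\beta)$ depends on the permutation $u$ alone, so it suffices to verify a single recursive step for each generator.

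For the base case $w=\mathrm{id}$, where $\overline{\varpi}_w$ is the identity operator, the relevant model is $\overline{\states}_{\lambda,w_0}$: its right boundary reads $w_0\cc=(c_n,c_{n-1},\dotsc,c_1)$ from top to bottom, the top boundary is fixed by the $01$-sequence of $\lambda$, and the bottom and left boundaries are all $0$. I would first argue that this dominant boundary admits a unique admissible state. The key point is that two strands cross at most once, enforced by the requirement ${\color{red}c}>{\color{blue}c'}$ in the crossing vertex $\tt{u}_2^{\dagger}$; since the colors already occur in sorted order along the right edge, no crossing is available, every strand is forced onto a single lattice path, and the state is frozen. I would then read off the Boltzmann weight of this unique state from Figure~\ref{fig:colored_weights} and check that it equals $\prod_{i=1}^{\ell}(x_i\oplus y)^{\lambda_i}$, with the factor $x_i\oplus y$ accrued once for each of the $\lambda_i$ boxes of $\lambda$ lying in row $i$.

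For the inductive step, write $w=s_k w'$ with $\ell(w')=\ell(w)-1$, so that $\overline{\varpi}_w=\overline{\varpi}_k\overline{\varpi}_{w'}$, and assume $Z(\overline{\states}_{\lambda,w_0w'};\xx,y;\beta)=\overline{\varpi}_{w'}\prod_{i=1}^{\ell}(x_i\oplus y)^{\lambda_i}$. The functional equation derived from the train argument is precisely the statement that passing between the two models related by this generator applies $\overline{\varpi}_k$ to the partition function, the operator acting (after the gauge transformation $x\mapsto x\oplus y$) on the $\xx$-variables of the left and right columns. Substituting the inductive hypothesis then yields $Z(\overline{\states}_{\lambda,w_0w};\xx,y;\beta)=\overline{\varpi}_k\overline{\varpi}_{w'}\prod_{i=1}^{\ell}(x_i\oplus y)^{\lambda_i}=\overline{\varpi}_w\prod_{i=1}^{\ell}(x_i\oplus y)^{\lambda_i}$, closing the induction.

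The main obstacle is the base case rather than the inductive step: one must verify both that the dominant boundary condition genuinely freezes the state uniquely and that its weight collapses to the clean product $\prod_{i=1}^{\ell}(x_i\oplus y)^{\lambda_i}$ rather than acquiring spurious $1+\beta(x\oplus y)$ contributions from turning vertices, which is a careful analysis of the admissible local configurations compatible with the boundary read off from the $01$-sequence of $\lambda$. The one delicate point in the inductive step is bookkeeping: since the model recalled here differs from the one in~\cite{BSW20} by $w\mapsto w_0w$ together with a reflection of the weights across the vertical axis, one must track these two conventions so that the generator index appearing in the functional equation matches the generator index on the operator side.
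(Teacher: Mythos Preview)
Your proposal is correct and follows essentially the same approach as the paper: the paper derives the functional equation via the train argument, observes that it coincides with the action of $\overline{\varpi}_i$, and then states the theorem (citing~\cite{BSW20}), leaving the base case and the formal induction implicit; you have simply made both explicit. Your flagging of the index bookkeeping between the $s_i$ appearing in the recursion for $\overline{\states}_{\lambda,s_i u}$ and the $s_k$ in a reduced word for $w$ (via $u = w_0 w'$) is apt and is the only point requiring care beyond what the paper records.
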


We call the polynomial $\overline{L}_{w\lambda}(\xx; \beta, y)$ the \defn{extended Lascoux atom} as it equals the Lascoux atom~\cite{Monical16} when $y = 0$.

An important point is if the values of $\yy$ are distinct, then the $R$-matrix depends on the particular values of $\yy$.
In particular, if we consider the $i$-th column as having a fixed value $y_i$, then the weights of the $R$-matrix in Figure~\ref{fig:colored_R_matrix} become
\[
\begin{array}{c@{\hspace{25pt}}c@{\hspace{25pt}}c@{\hspace{25pt}}c}
(1 + \beta x_i) (x_j \oplus y), & (1+ \beta x_i) (x_j \oplus y), & (x_j - x_i) (x_j \oplus y), & (1 + \beta x_j) (x_j \oplus y), \\[5pt]
(1 + \beta x_j) (x_i \oplus y), & (1 + \beta x_i) (x_j \oplus y), & x_j - x_i, & (1 + \beta x_i) (x_j \oplus y).
\end{array}
\]
Therefore, the standard train argument does not hold as the $R$-matrix for passing from the $i$-th column to the $(i+1)$-th column are different.
However, because all of the values of $\yy = y$ are equal, the value of $y$ is no longer an \emph{input} for the $R$-matrix; instead we can think of the $R$-matrix as given by the constant value of $y$ (just like for $\beta$).

\begin{figure}
\[
\begin{array}{c@{\;\;}c@{\;\;}c@{\;\;}c@{\;\;}c@{\;\;}c@{\;\;}c}
\toprule
  \tt{u}_1&\tt{u}'_2&\tt{u}^{\dagger}_2&\tt{u}^{\circ}_2&\tt{v}_2&\tt{w}_1&\tt{w}_2\\
\midrule
\begin{tikzpicture}
\coordinate (a) at (-.75, 0);
\coordinate (b) at (0, .75);
\coordinate (c) at (.75, 0);
\coordinate (d) at (0, -.75);
\coordinate (aa) at (-.75,.5);
\coordinate (cc) at (.75,.5);
\draw (a)--(0,0);
\draw (b)--(0,0);
\draw (c)--(0,0);
\draw (d)--(0,0);
\draw[fill=white] (a) circle (.25);
\draw[fill=white] (b) circle (.25);
\draw[fill=white] (c) circle (.25);
\draw[fill=white] (d) circle (.25);
\node at (0,1) { };
\node at (a) {$0$};
\node at (b) {$0$};
\node at (c) {$0$};
\node at (d) {$0$};
\path[fill=white] (0,0) circle (.2);
\node at (0,0) {$x$};
\end{tikzpicture}
& \begin{tikzpicture}
\coordinate (a) at (-.75, 0);
\coordinate (b) at (0, .75);
\coordinate (c) at (.75, 0);
\coordinate (d) at (0, -.75);
\coordinate (aa) at (-.75,.5);
\coordinate (cc) at (.75,.5);
\draw[line width=0.5mm, red] (c)--(0,0);
\draw[line width=0.5mm, red] (b)--(0,0);
\draw[line width=0.5mm, blue] (a)--(0,0);
\draw[line width=0.5mm, blue] (d)--(0,0);
\draw[line width=0.5mm, red,fill=white] (c) circle (.25);
\draw[line width=0.5mm, red,fill=white] (b) circle (.25);
\draw[line width=0.5mm, blue, fill=white] (a) circle (.25);
\draw[line width=0.5mm, blue, fill=white] (d) circle (.25);
\node at (0,1) { };
\node at (c) {$c$};
\node at (b) {$c$};
\node at (a) {$c'$};
\node at (d) {$c'$};
\path[fill=white] (0,0) circle (.2);
\node at (0,0) {$x$};
\end{tikzpicture}
& \begin{tikzpicture}
\coordinate (a) at (-.75, 0);
\coordinate (b) at (0, .75);
\coordinate (c) at (.75, 0);
\coordinate (d) at (0, -.75);
\coordinate (aa) at (-.75,.5);
\coordinate (cc) at (.75,.5);
\draw[line width=0.5mm, blue] (a)--(0,0);
\draw[line width=0.6mm, red] (b)--(0,0);
\draw[line width=0.5mm, blue] (c)--(0,0);
\draw[line width=0.6mm, red] (d)--(0,0);
\draw[line width=0.5mm, blue,fill=white] (a) circle (.25);
\draw[line width=0.5mm, red,fill=white] (b) circle (.25);
\draw[line width=0.5mm, blue, fill=white] (c) circle (.25);
\draw[line width=0.5mm, red, fill=white] (d) circle (.25);
\node at (0,1) { };
\node at (a) {$c'$};
\node at (b) {$c$};
\node at (c) {$c'$};
\node at (d) {$c$};
\path[fill=white] (0,0) circle (.2);
\node at (0,0) {$x$};
\end{tikzpicture}
& \begin{tikzpicture}
\coordinate (a) at (-.75, 0);
\coordinate (b) at (0, .75);
\coordinate (c) at (.75, 0);
\coordinate (d) at (0, -.75);
\coordinate (aa) at (-.75,.5);
\coordinate (cc) at (.75,.5);
\draw[line width=0.5mm, UQgold] (a)--(0,0);
\draw[line width=0.6mm, UQgold] (b)--(0,0);
\draw[line width=0.5mm, UQgold] (c)--(0,0);
\draw[line width=0.6mm, UQgold] (d)--(0,0);
\draw[line width=0.5mm, UQgold,fill=white] (a) circle (.25);
\draw[line width=0.5mm, UQgold,fill=white] (b) circle (.25);
\draw[line width=0.5mm, UQgold, fill=white] (c) circle (.25);
\draw[line width=0.5mm, UQgold, fill=white] (d) circle (.25);
\node at (0,1) { };
\node at (a) {$d$};
\node at (b) {$d$};
\node at (c) {$d$};
\node at (d) {$d$};
\path[fill=white] (0,0) circle (.2);
\node at (0,0) {$x$};
\end{tikzpicture}
& \begin{tikzpicture}
\coordinate (a) at (-.75, 0);
\coordinate (b) at (0, .75);
\coordinate (c) at (.75, 0);
\coordinate (d) at (0, -.75);
\coordinate (aa) at (-.75,.5);
\coordinate (cc) at (.75,.5);
\draw[line width=0.5mm, UQgold] (a)--(0,0);
\draw(b)--(0,0);
\draw[line width=0.5mm, UQgold] (c)--(0,0);
\draw (d)--(0,0);
\draw[line width=0.5mm,UQgold,fill=white] (a) circle (.25);
\draw[fill=white] (b) circle (.25);
\draw[line width=0.5mm,UQgold,fill=white] (c) circle (.25);
\draw[fill=white] (d) circle (.25);
\node at (0,1) { };
\node at (a) {$d$};
\node at (b) {$0$};
\node at (c) {$d$};
\node at (d) {$0$};
\path[fill=white] (0,0) circle (.2);
\node at (0,0) {$x$};
\end{tikzpicture}
& \begin{tikzpicture}
\coordinate (a) at (-.75, 0);
\coordinate (b) at (0, .75);
\coordinate (c) at (.75, 0);
\coordinate (d) at (0, -.75);
\coordinate (aa) at (-.75,.5);
\coordinate (cc) at (.75,.5);
\draw[line width=0.5mm, UQgold] (c)--(0,0);
\draw[line width=0.6mm, UQgold] (b)--(0,0);
\draw (a)--(0,0);
\draw (d)--(0,0);
\draw[line width=0.5mm,UQgold,fill=white] (c) circle (.25);
\draw[line width=0.5mm,UQgold,fill=white] (b) circle (.25);
\draw[fill=white] (a) circle (.25);
\draw[fill=white] (d) circle (.25);
\node at (0,1) { };
\node at (c) {$d$};
\node at (b) {$d$};
\node at (a) {$0$};
\node at (d) {$0$};
\path[fill=white] (0,0) circle (.2);
\node at (0,0) {$x$};
\end{tikzpicture}
& \begin{tikzpicture}
\coordinate (a) at (-.75, 0);
\coordinate (b) at (0, .75);
\coordinate (c) at (.75, 0);
\coordinate (d) at (0, -.75);
\coordinate (aa) at (-.75,.5);
\coordinate (cc) at (.75,.5);
\draw (c)--(0,0);
\draw (b)--(0,0);
\draw[line width=0.5mm, UQgold] (a)--(0,0);
\draw[line width=0.5mm, UQgold] (d)--(0,0);
\draw[fill=white] (c) circle (.25);
\draw[fill=white] (b) circle (.25);
\draw[line width=0.5mm,UQgold, fill=white] (a) circle (.25);
\draw[line width=0.5mm,UQgold, fill=white] (d) circle (.25);
\node at (0,1) { };
\node at (c) {$0$};
\node at (b) {$0$};
\node at (a) {$d$};
\node at (d) {$d$};
\path[fill=white] (0,0) circle (.2);
\node at (0,0) {$x$};
\end{tikzpicture}
\\ 
\midrule
1 & 1 &1 & 1 & x \oplus y & 1 & 1+\beta (x \oplus y) \\
\bottomrule
\end{array}
\]
\caption{The colored Boltzmann weights for the extended Lascoux polynomial with ${\color{red} c} > {\color{blue} c'}$ and ${\color{UQgold} d}$ being any color.}
\label{fig:colored_weights_Lpoly}
\end{figure}

Furthermore, as in~\cite{BSW20}, we also get an integrable model using the $L$-matrix given by Table~\ref{fig:colored_weights_Lpoly}, which differs by replacing the vertex $\tt{b}_1$ with the colors swapped, which we call $\tt{b}'_1$.
We denote the admissible states of this model by $\states_{\lambda, w_0 w}$ with the same boundary conditions as before.
Therefore, we have the following.

\begin{theorem}[{\cite[Thm.~3.4]{BSW20}}]
We have
\[
Z(\states_{\lambda, w_0 w}; \xx, y; \beta) = L_{w\lambda}(\xx; \beta, y) := \varpi_w \prod_{i=1}^{\ell} (x_i \oplus y)^{\lambda_i}.
\]
\end{theorem}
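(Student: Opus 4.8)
The plan is to mirror the proof of the atom model $\overline{\states}_{\lambda, w_0 w}$ given just above, inducting on $\ell(w)$ and recording how the single swapped vertex $\tt{b}'_1$ of Figure~\ref{fig:colored_weights_Lpoly} converts the operator $\overline{\varpi}_i$ into $\varpi_i$. For the base case $w = e$ the model index is $w_0 w = w_0$; here I would check that the boundary conditions (left and bottom $0$, right $w_0 \cc$, top prescribed by $\lambda$) admit a unique admissible state, whose Boltzmann weight is directly computed to be $\prod_{i=1}^{\ell}(x_i \oplus y)^{\lambda_i}$, and this equals $\varpi_e \prod_{i=1}^{\ell}(x_i \oplus y)^{\lambda_i}$ since $\varpi_e$ is the identity.

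For the inductive step I would first confirm that the $L$-matrix of Figure~\ref{fig:colored_weights_Lpoly} satisfies the $RLL$ relation of Proposition~\ref{prop:YBE} with the same $R$-matrix of Figure~\ref{fig:colored_R_matrix}; by Remark~\ref{remark:finite_colors} this reduces to a finite matrix identity in at most three colors, and only the configurations routed through the swapped vertex must be rechecked against the atom computation. Granting integrability, the train argument of Figure~\ref{fig:train_argument} — attaching an $R$-matrix to the two leftmost rows, whose left boundaries are $0$, and pushing it across the lattice — produces the $\varpi_i$-analog of the functional equation displayed for the atom model above, expressing $Z(\states_{\lambda, s_i w}; \xx, y; \beta)$ through $Z(\states_{\lambda, w}; \xx, y; \beta)$ and its image under $x_i \leftrightarrow x_{i+1}$. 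Matching this against $\varpi_i f = \pi_i\bigl((1 + \beta x_{i+1}) f\bigr)$ and invoking the recursion $\varpi_{s_i w} = \varpi_i \varpi_w$, valid because $\varpi_i$ obeys the braid and commutation relations, closes the induction.

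The crux is the bookkeeping that distinguishes $\varpi_i$ from $\overline{\varpi}_i = \varpi_i - 1$: I must verify that exchanging $\tt{b}_1$ for $\tt{b}'_1$ reinstates precisely the identity term that the atom recursion omits. Since the two $L$-matrices differ only in which color-order is permitted at the relevant weight-$1$ non-crossing vertex, I expect the swap to alter exactly the diagonal entry of the $2\times 2$ color block that the $R$-matrix isolates when transported through a region of trivial left boundary; pinning down that entry and confirming it supplies the $+1$ is the one genuinely computational point. The remaining ingredients — the braid and commutation relations for $\varpi_i$ (hence well-definedness of $\varpi_w$) and the three-color reduction — are either already available from the atom-model analysis or entirely routine.
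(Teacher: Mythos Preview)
The paper does not supply its own proof of this statement; it is quoted from \cite{BSW20} immediately after the remark that the model of Figure~\ref{fig:colored_weights_Lpoly} is integrable ``as in~\cite{BSW20}'', so the intended argument is precisely the scheme you outline: the same induction on $\ell(w)$ via the train argument used for the atom model $\overline{\states}_{\lambda,w_0 w}$ just above, with the functional equation now matching $\varpi_i$ instead of $\overline{\varpi}_i$. Your plan is correct and aligned with what the paper indicates.

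One technical caution on the point you flag as the crux. You propose to check the $RLL$ relation ``with the same $R$-matrix of Figure~\ref{fig:colored_R_matrix}''. Because swapping $\tt{u}_2$ for $\tt{u}'_2$ reverses which of the two colors is privileged at the non-crossing two-color vertex, the color-asymmetric $R$-entries (the pair with weights $(1+\beta Z_j)Z_i$ versus $(1+\beta Z_i)Z_j$) must be swapped correspondingly for $RLL$ to hold; it is not literally the identical $R$-matrix. This does not disturb the train argument on the left, where the boundary is all $0$ and only the $(0,0,0,0)$ entry of $R$ contributes, but it does change which $R$-entries survive on the \emph{right}, where the colored boundary sits---and that right-hand evaluation is exactly where the extra ``$+1$'' distinguishing $\varpi_i$ from $\overline{\varpi}_i = \varpi_i - 1$ appears. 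So your localization of the bookkeeping is right, but it lives on the colored side of the train rather than in the interior $L$-vertices.
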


We call $L_{w\lambda}(\xx; \beta, y)$ the \defn{extended Lascoux polynomial}.

If we consider the uncolored model (equivalently having precisely one color that we call $1$), then the corresponding $R$-matrix is independent of $y$~\cite{GK17,Motegi20} (see Remark~\ref{rem:specialized_models}).
Note that from the above discussion there is no natural colored version of this model.
We denote by $\states_{\lambda}$ the uncolored model with the right boundary being $1$, left and bottom boundaries being $0$, and top boundary being the $01$-sequence of $\lambda$ in reverse.
Using the natural bijection between with Gelfand--Tsetlin patterns from~\cite[Sec.~4.2]{MPS18}, we can see the partition function is precisely the factorial Grothendieck polynomial.

\begin{theorem}[{\cite{GK17,MS13,Motegi20,WZJ19}}]
\label{thm:uncolored_factorial}
We have
\[
Z(\states_{\lambda}; \xx, \yy; \beta) = \G_{\lambda}(\xx | \yy;  \beta).
\]
\end{theorem}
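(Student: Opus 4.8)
The plan is to establish a weight-preserving bijection between the admissible states $\states_{\lambda}$ and the set-valued tableaux $\svt_{\lambda}$, under which the Boltzmann weight of a state equals the term contributed by the corresponding tableau in the combinatorial definition of $\G_{\lambda}(\xx \mid \yy; \beta)$. Since the identity itself is already recorded in \cite{GK17,MS13,Motegi20,WZJ19}, the real content is to transport their correspondence into our conventions through the bijection with Gelfand--Tsetlin patterns of \cite[Sec.~4.2]{MPS18}.

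First I would determine the surviving local configurations. With a single color the edge labels lie in $\{0,1\}$, so the genuinely two-color vertices $\tt{u}'_2$ and $\tt{u}_2^{\dagger}$ of Figure~\ref{fig:colored_weights_Lpoly} are forbidden; what remains are $\tt{u}_1$, $\tt{u}_2^{\circ}$, $\tt{v}_2$, $\tt{w}_1$, $\tt{w}_2$, of which only $\tt{v}_2$ (weight $x \oplus y$) and $\tt{w}_2$ (weight $1 + \beta(x \oplus y)$) carry nontrivial weight. Conservation of the colored edge at each vertex shows that every state is a family of lattice paths, and the boundary data---right edges all colored, top edges colored at the $1$-positions of the $01$-sequence of $\lambda$, and left and bottom edges uncolored---forces exactly $n$ such paths running from the right boundary to the top boundary (the two counts agree because the $01$-sequence of a length-$n$ partition has exactly $n$ ones). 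Recording, level by level from top to bottom, the columns occupied by the vertical colored edges produces an interlacing array, that is, a Gelfand--Tsetlin pattern with top shape $\lambda$ and bottom shape empty; this is the bijection between (marked) states and patterns of \cite{MPS18}.

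Next I would match weights. Since a vertex in row $i$ and column $j$ is weighted with $x = x_i$ and $y = y_j$, a $\tt{v}_2$ there contributes $x_i \oplus y_j$. Under the Gelfand--Tsetlin-to-tableaux dictionary this vertex records the value $i$ occupying a box of content $c$ with $j = i + c$, so its contribution is precisely the factor $x_i \oplus y_{i + c}$ appearing in $\G_{\lambda}$. The vertex $\tt{w}_2$, expanded as $1 + \beta(x_i \oplus y_j)$, distributes over the sum: the summand $1$ leaves the underlying semistandard tableau unchanged, while the summand $\beta(x_i \oplus y_j)$ inserts one extra entry $i$ into the corresponding box, raising $\abs{T} - \abs{\lambda}$ by one and contributing the matching factor $x_i \oplus y_{i+c}$. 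Summing over all states and expanding every $\tt{w}_2$ factor therefore reproduces $\sum_{T \in \svt_{\lambda}} \beta^{\abs{T} - \abs{\lambda}} \prod_{A \in T}\prod_{i \in A}(x_i \oplus y_{i + c(A)})$, which is exactly $\G_{\lambda}(\xx \mid \yy; \beta)$.

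The main obstacle is to verify that this dictionary is a genuine weight-preserving bijection rather than a mere formal match of generating functions: one must check that admissibility---equivalently, the interlacing of consecutive Gelfand--Tsetlin rows together with the placement of the $\tt{w}_2$ doublings---corresponds precisely to the weakly increasing rows and strictly increasing columns of a set-valued tableau, and that the reindexing $j = i + c(A)$ between model columns and box contents is globally consistent across all rows. The cleanest way to discharge this is to invoke Remark~\ref{rem:specialized_models}: up to the indicated gauge transformation and the swap $0 \leftrightarrow 1$, our single-color $L$-matrix is exactly that of \cite{GK17,MS13,Motegi20}, so the present statement is precisely their evaluation of the partition function, and the weight bookkeeping above only serves to transport that evaluation into our normalization.
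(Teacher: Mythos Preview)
Your proposal is correct and follows exactly the approach the paper indicates: the paper does not supply a proof of this theorem but simply cites \cite{GK17,MS13,Motegi20,WZJ19} and remarks in the preceding sentence that ``using the natural bijection with Gelfand--Tsetlin patterns from~\cite[Sec.~4.2]{MPS18}, we can see the partition function is precisely the factorial Grothendieck polynomial.'' Your argument is a faithful unpacking of that remark together with the gauge identification of Remark~\ref{rem:specialized_models}; the only slip is that the top boundary of $\states_{\lambda}$ is the $01$-sequence of $\lambda$ \emph{in reverse}, not the sequence itself, which you should adjust when tracking the column--content correspondence $j = i + c(A)$.
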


\section{Double Grothendieck polynomial colored model}
\label{sec:double_Grothendieck_model}

In this section, we give our main result: an integrable colored model whose partition function is a double Grothendieck polynomial.
We then explain how this model corresponds to the bumpless pipe dream formula from~\cite{Weigandt20}.

\subsection{The colored model}
\label{sec:main_colored_model}

We begin with a remark about how we are drawing our $L$-matrices relate to their representation-theoretic interpretation.

\begin{remark}
For the previous colored model, the $L$-matrix given by $L(a \otimes b) = b' \otimes a'$ by the picture on the left, whereas in this section we use the picture on the right
\[
\begin{tikzpicture}[scale=0.8]
\draw[->] (0,-1) node[anchor=north] {$b$} -- (0,1) node[anchor=south] {$b'$};
\draw[->] (1,0) node[anchor=west] {$a$} -- (-1,0) node[anchor=east] {$a'$};
\begin{scope}[xshift=7cm]
\draw[->] (0,-1) node[anchor=north] {$b$} -- (0,1) node[anchor=south] {$b'$};
\draw[->] (-1,0) node[anchor=east] {$a$} -- (1,0) node[anchor=west] {$a'$};
\end{scope}
\end{tikzpicture}
\]
\end{remark}

\begin{figure}
\[
\begin{array}{c@{\;\;}c@{\;\;}c@{\;\;}c@{\;\;}c@{\;\;}c@{\;\;}c}
\toprule
  \tt{a}_1&\tt{a}_2&\tt{a}^{\dagger}_2&\tt{b}_1&\tt{b}_2&\tt{c}_1&\tt{c}_2\\
\midrule
\begin{tikzpicture}
\coordinate (a) at (-.75, 0);
\coordinate (b) at (0, .75);
\coordinate (c) at (.75, 0);
\coordinate (d) at (0, -.75);
\coordinate (aa) at (-.75,.5);
\coordinate (cc) at (.75,.5);
\draw (a)--(0,0);
\draw (b)--(0,0);
\draw (c)--(0,0);
\draw (d)--(0,0);
\draw[fill=white] (a) circle (.25);
\draw[fill=white] (b) circle (.25);
\draw[fill=white] (c) circle (.25);
\draw[fill=white] (d) circle (.25);
\node at (0,1) { };
\node at (a) {$0$};
\node at (b) {$0$};
\node at (c) {$0$};
\node at (d) {$0$};
\path[fill=white] (0,0) circle (.2);
\node at (0,0) {$x$};
\end{tikzpicture}
& \begin{tikzpicture}
\coordinate (a) at (-.75, 0);
\coordinate (b) at (0, .75);
\coordinate (c) at (.75, 0);
\coordinate (d) at (0, -.75);
\coordinate (aa) at (-.75,.5);
\coordinate (cc) at (.75,.5);
\draw[line width=0.5mm, blue] (a)--(0,0);
\draw[line width=0.5mm, blue] (b)--(0,0);
\draw[line width=0.5mm, red] (c)--(0,0);
\draw[line width=0.5mm, red] (d)--(0,0);
\draw[line width=0.5mm, blue, fill=white] (a) circle (.25);
\draw[line width=0.5mm, blue, fill=white] (b) circle (.25);
\draw[line width=0.5mm, red, fill=white] (c) circle (.25);
\draw[line width=0.5mm, red, fill=white] (d) circle (.25);
\node at (0,1) { };
\node at (a) {$c'$};
\node at (b) {$c'$};
\node at (c) {$c$};
\node at (d) {$c$};
\path[fill=white] (0,0) circle (.2);
\node at (0,0) {$x$};
\end{tikzpicture}
& \begin{tikzpicture}
\coordinate (a) at (-.75, 0);
\coordinate (b) at (0, .75);
\coordinate (c) at (.75, 0);
\coordinate (d) at (0, -.75);
\coordinate (aa) at (-.75,.5);
\coordinate (cc) at (.75,.5);
\draw[line width=0.5mm, red] (a)--(0,0);
\draw[line width=0.6mm, blue] (b)--(0,0);
\draw[line width=0.5mm, red] (c)--(0,0);
\draw[line width=0.6mm, blue] (d)--(0,0);
\draw[line width=0.5mm, red, fill=white] (a) circle (.25);
\draw[line width=0.5mm, blue, fill=white] (b) circle (.25);
\draw[line width=0.5mm, red, fill=white] (c) circle (.25);
\draw[line width=0.5mm, blue, fill=white] (d) circle (.25);
\node at (0,1) { };
\node at (a) {$c$};
\node at (b) {$c'$};
\node at (c) {$c$};
\node at (d) {$c'$};
\path[fill=white] (0,0) circle (.2);
\node at (0,0) {$x$};
\end{tikzpicture}
& \begin{tikzpicture}
\coordinate (a) at (-.75, 0);
\coordinate (b) at (0, .75);
\coordinate (c) at (.75, 0);
\coordinate (d) at (0, -.75);
\coordinate (aa) at (-.75,.5);
\coordinate (cc) at (.75,.5);
\draw (a)--(0,0);
\draw[line width=0.6mm, UQgold] (b)--(0,0);
\draw (c)--(0,0);
\draw[line width=0.5mm, UQgold] (d)--(0,0);
\draw[fill=white] (a) circle (.25);
\draw[line width=0.5mm,UQgold,fill=white] (b) circle (.25);
\draw[fill=white] (c) circle (.25);
\draw[line width=0.5mm,UQgold,fill=white] (d) circle (.25);
\node at (0,1) { };
\node at (a) {$0$};
\node at (b) {$d$};
\node at (c) {$0$};
\node at (d) {$d$};
\path[fill=white] (0,0) circle (.2);
\node at (0,0) {$x$};
\end{tikzpicture}
& \begin{tikzpicture}
\coordinate (a) at (-.75, 0);
\coordinate (b) at (0, .75);
\coordinate (c) at (.75, 0);
\coordinate (d) at (0, -.75);
\coordinate (aa) at (-.75,.5);
\coordinate (cc) at (.75,.5);
\draw[line width=0.5mm, UQgold] (a)--(0,0);
\draw(b)--(0,0);
\draw[line width=0.5mm, UQgold] (c)--(0,0);
\draw (d)--(0,0);
\draw[line width=0.5mm,UQgold,fill=white] (a) circle (.25);
\draw[fill=white] (b) circle (.25);
\draw[line width=0.5mm,UQgold,fill=white] (c) circle (.25);
\draw[fill=white] (d) circle (.25);
\node at (0,1) { };
\node at (a) {$d$};
\node at (b) {$0$};
\node at (c) {$d$};
\node at (d) {$0$};
\path[fill=white] (0,0) circle (.2);
\node at (0,0) {$x$};
\end{tikzpicture}
& \begin{tikzpicture}
\coordinate (a) at (-.75, 0);
\coordinate (b) at (0, .75);
\coordinate (c) at (.75, 0);
\coordinate (d) at (0, -.75);
\coordinate (aa) at (-.75,.5);
\coordinate (cc) at (.75,.5);
\draw[line width=0.5mm, UQgold] (a)--(0,0);
\draw[line width=0.6mm, UQgold] (b)--(0,0);
\draw (c)--(0,0);
\draw (d)--(0,0);
\draw[line width=0.5mm,UQgold,fill=white] (a) circle (.25);
\draw[line width=0.5mm,UQgold,fill=white] (b) circle (.25);
\draw[fill=white] (c) circle (.25);
\draw[fill=white] (d) circle (.25);
\node at (0,1) { };
\node at (a) {$d$};
\node at (b) {$d$};
\node at (c) {$0$};
\node at (d) {$0$};
\path[fill=white] (0,0) circle (.2);
\node at (0,0) {$x$};
\end{tikzpicture}
& \begin{tikzpicture}
\coordinate (a) at (-.75, 0);
\coordinate (b) at (0, .75);
\coordinate (c) at (.75, 0);
\coordinate (d) at (0, -.75);
\coordinate (aa) at (-.75,.5);
\coordinate (cc) at (.75,.5);
\draw (a)--(0,0);
\draw (b)--(0,0);
\draw[line width=0.5mm, UQgold] (c)--(0,0);
\draw[line width=0.5mm, UQgold] (d)--(0,0);
\draw[fill=white] (a) circle (.25);
\draw[fill=white] (b) circle (.25);
\draw[line width=0.5mm,UQgold, fill=white] (c) circle (.25);
\draw[line width=0.5mm,UQgold, fill=white] (d) circle (.25);
\node at (0,1) { };
\node at (a) {$0$};
\node at (b) {$0$};
\node at (c) {$d$};
\node at (d) {$d$};
\path[fill=white] (0,0) circle (.2);
\node at (0,0) {$x$};
\end{tikzpicture}
\\ 
\midrule
\beta (x \oplus y) & 1 &1  & 1 & 1 & 1 + \beta (x \oplus y) & 1 \\
\bottomrule
\end{array}
\]
\caption{The colored Boltzmann weights with ${\color{red} c} > {\color{blue} c'}$ and ${\color{UQgold} d}$ being any color for the double Grothendieck model.}
\label{fig:bumpless_weights}
\end{figure}

We construct a vertex model $\Gstates_w$, which we call the \defn{double Grothendieck model} on an $n \times n$ grid using the $L$-matrix given by Figure~\ref{fig:bumpless_weights} with the following boundary conditions.
We consider colors $\cc = (c_1 > c_2 > \cdots > c_n)$.
We let the left and top boundary conditions be $0$, the bottom boundary being $\cc$ (from left-to-right), and the right boundary being $w\cc$ (from top-to-bottom).

\begin{proposition}
\label{prop:double_Grothendieck_integrable}
The colored vertex model $\Gstates_w$ is integrable with $R$-matrix given by Figure~\ref{fig:bumpless_R_matrix}.
\end{proposition}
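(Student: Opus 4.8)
The plan is to establish integrability by verifying the $RLL$ form of the Yang--Baxter equation for the $L$-matrix of Figure~\ref{fig:bumpless_weights} paired with the $R$-matrix of Figure~\ref{fig:bumpless_R_matrix}, exactly as in Proposition~\ref{prop:YBE}. That is, for every assignment of colors to the six external edges, I would show that the two ways of sliding the $R$-vertex across a pair of $L$-vertices (the analog of the two sides of~\eqref{eq:RLL_relation}) produce equal partition functions. Since this is a purely local identity, it suffices to check it once and for all, independently of the grid size $n$ and of the boundary data defining $\Gstates_w$.

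First I would reduce to a finite computation using color conservation. Both the $L$-matrix and the $R$-matrix send the multiset of incoming colors to the same multiset of outgoing colors along each line, so the operator governing the Yang--Baxter equation is block diagonal with respect to the colors carried by the three external strands. As in Remark~\ref{remark:finite_colors}, at most three colors can appear, so within each color block the equation collapses to a finite matrix identity whose size is bounded independently of $n$. Thus integrability is equivalent to a finite check.

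Next I would organize that check by the number of distinct nonzero colors present. The zero- and one-color sectors involve only the weights $\beta(x\oplus y)$, $1$, and $1+\beta(x\oplus y)$ and reduce to the scalar Yang--Baxter relations of the underlying uncolored model, which coincides (up to the gauge transformation and reflection noted in Remark~\ref{rem:specialized_models}) with the already-integrable $L'$-matrix of~\cite{GK17,Motegi20}; these cases can be read off directly. The two- and three-color sectors carry the genuine colored braiding: here the constraint ${\color{red}c}>{\color{blue}c'}$ built into the crossing vertex $\tt{a}^{\dagger}_2$ and into the $R$-matrix ensures that in each block there is a unique admissible resolution, so one verifies that a colored crossing is transported correctly past the auxiliary line and that the accompanying scalar weights agree on both sides.

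The main obstacle I anticipate is the bookkeeping in the multicolor sectors: one must confirm that every admissible pairing of the seven $L$-weights with the entries of the $R$-matrix is accounted for, and in particular that the $\beta$-dependent weights on the empty vertex $\tt{a}_1$ and the turn vertex $\tt{c}_1$ recombine consistently on the two sides of the equation. A cleaner alternative would be to exhibit an invertible relabeling of edge states intertwining this $L$-matrix with the already-integrable one of Figure~\ref{fig:colored_weights} together with the corresponding $R$-matrices; the difficulty there lies in producing a transformation that accounts for the redistribution of the $\beta(x\oplus y)$ and $1+\beta(x\oplus y)$ weights between the two tables while preserving admissibility, which is precisely the content of the resolving-crossings picture underlying the model.
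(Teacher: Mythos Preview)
Your plan is correct in outline: the $RLL$ relation is a local identity, color conservation bounds the number of colors to three (as in Remark~\ref{remark:finite_colors}), and the whole thing reduces to a finite matrix check. The paper, however, takes a blunter route: rather than organizing the verification by color sector and reducing to known uncolored models, it simply carries out the entire computation by machine. Appendix~\ref{sec:sage_code} contains \textsc{SageMath} code that encodes the $L$-matrix of Figure~\ref{fig:bumpless_weights}, treats the $R$-matrix entries as unknowns, forms the linear system coming from $RLL-LLR=0$, and solves for the kernel; the output is then matched against Figure~\ref{fig:bumpless_R_matrix}. So the paper's ``proof'' is literally the code listing.

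One small correction to your sketch: Remark~\ref{rem:specialized_models} and the comparison with~\cite{GK17,Motegi20} concern the $L$-matrix of Section~\ref{sec:colored_model} (Figure~\ref{fig:colored_weights}), not the double Grothendieck $L$-matrix of Figure~\ref{fig:bumpless_weights}. The two differ already in the uncolored sector (e.g., the empty vertex $\tt{a}_1$ has weight $\beta(x\oplus y)$ here versus $1$ there), so you cannot simply import integrability of the one-color block from those references. You would still need to verify the uncolored Yang--Baxter relation directly for the weights $\beta(x\oplus y),\,1,\,1,\,1+\beta(x\oplus y),\,1$, which is straightforward but not a quotation. Your structured case analysis would work and is more illuminating than a raw computer check; the paper's approach trades insight for certainty and brevity.
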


See Appendix~\ref{sec:sage_code} for the \textsc{SageMath}~\cite{sage} code for showing Proposition~\ref{prop:double_Grothendieck_integrable}.

\begin{figure}
\[
\begin{array}{c@{\hspace{30pt}}c@{\hspace{30pt}}c@{\hspace{30pt}}c}
\toprule
\begin{tikzpicture}[scale=0.7]
\draw (0,0) to [out = 0, in = 180] (2,2);
\draw (0,2) to [out = 0, in = 180] (2,0);
\draw[fill=white] (0,0) circle (.35);
\draw[fill=white] (0,2) circle (.35);
\draw[fill=white] (2,0) circle (.35);
\draw[fill=white] (2,2) circle (.35);
\node at (0,0) {$0$};
\node at (0,2) {$0$};
\node at (2,2) {$0$};
\node at (2,0) {$0$};
\path[fill=white] (1,1) circle (.3);
\node at (1,1) {$x_i,x_j$};
\end{tikzpicture}&
\begin{tikzpicture}[scale=0.7]
\draw (0,0) to [out = 0, in = 180] (2,2);
\draw (0,2) to [out = 0, in = 180] (2,0);
\draw[fill=white] (0,0) circle (.35);
\draw[line width=0.5mm, UQgold, fill=white] (0,2) circle (.35);
\draw[line width=0.5mm, UQgold, fill=white] (2,2) circle (.35);
\draw[fill=white] (2,0) circle (.35);
\node at (0,0) {$0$};
\node at (0,2) {$d$};
\node at (2,2) {$d$};
\node at (2,0) {$0$};
\path[fill=white] (1,1) circle (.3);
\node at (1,1) {$x_i,x_j$};
\end{tikzpicture}&
\begin{tikzpicture}[scale=0.7]
\draw (0,0) to [out = 0, in = 180] (2,2);
\draw (0,2) to [out = 0, in = 180] (2,0);
\draw[fill=white] (0,0) circle (.35);
\draw[line width=0.5mm, UQgold, fill=white] (0,2) circle (.35);
\draw[fill=white] (2,2) circle (.35);
\draw[line width=0.5mm, UQgold, fill=white] (2,0) circle (.35);
\node at (0,0) {$0$};
\node at (0,2) {$d$};
\node at (2,2) {$0$};
\node at (2,0) {$d$};
\path[fill=white] (1,1) circle (.3);
\node at (1,1) {$x_i,x_j$};
\end{tikzpicture}&
\begin{tikzpicture}[scale=0.7]
\draw (0,0) to [out = 0, in = 180] (2,2);
\draw (0,2) to [out = 0, in = 180] (2,0);
\draw[line width=0.5mm, UQgold, fill=white] (0,0) circle (.35);
\draw[fill=white] (0,2) circle (.35);
\draw[fill=white] (2,2) circle (.35);
\draw[line width=0.5mm, UQgold, fill=white] (2,0) circle (.35);
\node at (0,0) {$d$};
\node at (0,2) {$0$};
\node at (2,2) {$0$};
\node at (2,0) {$d$};
\path[fill=white] (1,1) circle (.3);
\node at (1,1) {$x_i,x_j$};
\end{tikzpicture}\\
   \midrule
 1 + \beta x_i & 1+ \beta x_i & \beta (x_j - x_i) & 1 + \beta x_j \\
   \midrule
\begin{tikzpicture}[scale=0.7]
\draw (0,0) to [out = 0, in = 180] (2,2);
\draw (0,2) to [out = 0, in = 180] (2,0);
\draw[line width=0.5mm, blue, fill=white] (0,0) circle (.35);
\draw[line width=0.5mm, red, fill=white] (0,2) circle (.35);
\draw[line width=0.5mm, red, fill=white] (2,2) circle (.35);
\draw[line width=0.5mm, blue, fill=white] (2,0) circle (.35);
\node at (0,0) {$c'$};
\node at (0,2) {$c$};
\node at (2,2) {$c$};
\node at (2,0) {$c'$};
\path[fill=white] (1,1) circle (.3);
\node at (1,1) {$x_i,x_j$};
\end{tikzpicture}&
\begin{tikzpicture}[scale=0.7]
\draw (0,0) to [out = 0, in = 180] (2,2);
\draw (0,2) to [out = 0, in = 180] (2,0);
\draw[line width=0.5mm, red, fill=white] (0,0) circle (.35);
\draw[line width=0.5mm, blue, fill=white] (0,2) circle (.35);
\draw[line width=0.5mm, blue, fill=white] (2,2) circle (.35);
\draw[line width=0.5mm, red, fill=white] (2,0) circle (.35);
\node at (0,0) {$c$};
\node at (0,2) {$c'$};
\node at (2,2) {$c'$};
\node at (2,0) {$c$};
\path[fill=white] (1,1) circle (.3);
\node at (1,1) {$x_i,x_j$};
\end{tikzpicture}&
\begin{tikzpicture}[scale=0.7]
\draw (0,0) to [out = 0, in = 180] (2,2);
\draw (0,2) to [out = 0, in = 180] (2,0);
\draw[line width=0.5mm, blue, fill=white] (0,0) circle (.35);
\draw[line width=0.5mm, red, fill=white] (0,2) circle (.35);
\draw[line width=0.5mm, blue, fill=white] (2,2) circle (.35);
\draw[line width=0.5mm, red, fill=white] (2,0) circle (.35);
\node at (0,0) {$c'$};
\node at (0,2) {$c$};
\node at (2,2) {$c'$};
\node at (2,0) {$c$};
\path[fill=white] (1,1) circle (.3);
\node at (1,1) {$x_i,x_j$};
\end{tikzpicture}&
\begin{tikzpicture}[scale=0.7]
\draw (0,0) to [out = 0, in = 180] (2,2);
\draw (0,2) to [out = 0, in = 180] (2,0);
\draw[line width=0.5mm, UQgold, fill=white] (0,0) circle (.35);
\draw[line width=0.5mm, UQgold, fill=white] (0,2) circle (.35);
\draw[line width=0.5mm, UQgold, fill=white] (2,2) circle (.35);
\draw[line width=0.5mm, UQgold, fill=white] (2,0) circle (.35);
\node at (0,0) {$d$};
\node at (0,2) {$d$};
\node at (2,2) {$d$};
\node at (2,0) {$d$};
\path[fill=white] (1,1) circle (.3);
\node at (1,1) {$x_i,x_j$};
\end{tikzpicture}\\
   \midrule
1 + \beta x_i & 1 + \beta x_j & \beta (x_j - x_i) & 1 + \beta x_i \\
   \bottomrule
\end{array}
\]
\caption{The colored $R$-matrix with ${\color{red} c} > {\color{blue} c'}$ and ${\color{UQgold} d}$ being any color for the double Grothendieck model. Note that the weights are not symmetric with respect to color.}
\label{fig:bumpless_R_matrix}
\end{figure}
 
Following~\cite{BBBG19,BSW20}, we use the Yang--Baxter equation and the standard train argument to construct a functional equation for the partition function of our model $\Gstates_w$.
The proof is entirely standard and is the same as the one given in, \textit{e.g.},~\cite[Lemma~3.3]{BSW20}.
See Figure~\ref{fig:train_argument} for a pictorial description of the proof.

\begin{lemma}
\label{lemma:recurrence_eq}
Let $w \in \sym{n}$, and consider any $s_i$ such that $w s_i > w$.
Then we have
\[
\beta Z(\Gstates_w; \xx, \yy; \beta) = \frac{(1 + \beta x_{i+1} ) \cdot Z(\Gstates_{w s_i}; \xx, \yy; \beta) - (1 + \beta x_i) \cdot Z(\Gstates_{w s_i}; s_i \xx, \yy; \beta)}{x_i - x_{i+1}}.
\]
\end{lemma}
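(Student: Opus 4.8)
The plan is to follow the standard train argument, exactly as in \cite[Lemma~3.3]{BSW20}, using the integrability established in Proposition~\ref{prop:double_Grothendieck_integrable}. The essential input is the $RLL$ Yang--Baxter equation for the $L$-matrix of Figure~\ref{fig:bumpless_weights} and the $R$-matrix of Figure~\ref{fig:bumpless_R_matrix}, together with the pictorial train argument of Figure~\ref{fig:train_argument}. First I would record the one combinatorial fact that feeds the colors into the computation: since $w s_i > w$ is equivalent to $w(i) < w(i+1)$, and $\cc = (c_1 > \cdots > c_n)$ is strictly decreasing, the two right-boundary colors in rows $i$ and $i+1$ satisfy $c := c_{w(i)} > c_{w(i+1)} =: c'$. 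Thus, reading top-to-bottom, the right boundary of $\Gstates_w$ in these two rows is $(c, c')$, while that of $\Gstates_{w s_i}$ is $(c', c)$, and all other boundary edges of the two models agree. I would also note that, unlike the $R$-matrix of Figure~\ref{fig:colored_R_matrix}, the weights in Figure~\ref{fig:bumpless_R_matrix} do not involve $\yy$ at all, so the train argument goes through for arbitrary (not necessarily equal) $\yy$; this is exactly why the statement can be phrased with the full sequence $\yy$ rather than a single value.

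Next I would carry out the train argument on rows $i$ and $i+1$, attaching an $R$-matrix with spectral parameters $(x_i, x_{i+1})$ to these two rows and sliding it from the left boundary to the right boundary by repeated application of the Yang--Baxter equation (Figure~\ref{fig:train_argument}). I fix the outer boundary of both resulting configurations to have left edges $0,0$ and right edges $(c', c)$ in rows $i,i+1$. On the side where the $R$-matrix sits at the left, it receives $0 \otimes 0$ from the outer left boundary; by color conservation its only admissible configuration is the all-$0$ vertex, which contributes the scalar $1 + \beta x_i$ and leaves the two rows with their spectral parameters interchanged. Hence this side equals $(1 + \beta x_i)\, Z(\Gstates_{w s_i}; s_i \xx, \yy; \beta)$, where I must check that the swap lands on $\Gstates_{w s_i}$ (whose right boundary matches the chosen outer $(c',c)$) and not on $\Gstates_w$.

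On the other side, the $R$-matrix now acts on the internal right edges produced by the lattice and outputs the fixed outer pair $(c', c)$. Summing over the internal colors (necessarily a permutation of $\{c, c'\}$) and reading the relevant weights from Figure~\ref{fig:bumpless_R_matrix} with $x_j = x_{i+1}$, exactly two configurations survive: the non-crossing vertex taking internal $(c', c)$ to $(c', c)$ with weight $1 + \beta x_{i+1}$, and the crossing vertex taking internal $(c, c')$ to $(c', c)$ with weight $\beta(x_{i+1} - x_i)$. The crucial point is that the fourth a priori possibility, taking internal $(c', c)$ to $(c, c')$, has weight $0$, reflecting the color asymmetry of the $R$-matrix. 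Recognizing the internal boundary $(c', c)$ as that of $\Gstates_{w s_i}$ and $(c, c')$ as that of $\Gstates_w$, this side equals $(1 + \beta x_{i+1})\, Z(\Gstates_{w s_i}; \xx, \yy; \beta) + \beta(x_{i+1} - x_i)\, Z(\Gstates_w; \xx, \yy; \beta)$. Equating the two sides and solving for $\beta\, Z(\Gstates_w; \xx, \yy; \beta)$ yields the claimed identity after dividing by $x_i - x_{i+1}$.

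The computation is routine once the weights are set up correctly, and the only place an error could realistically enter is the bookkeeping of the $R$-matrix weights for the two surviving color configurations together with the vanishing of the reverse-swap weight: it is precisely this asymmetry that produces exactly the two terms appearing in the numerator rather than three or four. A secondary point to verify carefully is the direction of the spectral-parameter interchange, which determines that $s_i \xx$ attaches to $\Gstates_{w s_i}$ on the correct side of the resulting equation.
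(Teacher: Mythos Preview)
Your proposal is correct and follows exactly the standard train argument the paper invokes (citing \cite[Lemma~3.3]{BSW20} and Figure~\ref{fig:train_argument}); the identification of the two surviving $R$-matrix configurations on the right and the single all-$0$ configuration on the left, together with their weights $1+\beta x_{i+1}$, $\beta(x_{i+1}-x_i)$, and $1+\beta x_i$, is precisely what the argument requires. Your aside about a ``fourth a~priori possibility'' $(c',c)\to(c,c')$ is slightly off---that configuration has the wrong outer label and so was never in play once you fixed the outer right boundary to $(c',c)$---but this does not affect the computation, which is sound.
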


Now we show that the partition function satisfies the same functional equation and initial condition as the double Grothendieck polynomials to obtain our main result.

\begin{theorem}
\label{thm:5vertex_atoms}
We have
\[
Z(\Gstates_w; \xx, \yy; \beta) = \beta^{\ell(w)} \G_w(\xx, \yy; \beta).
\]
\end{theorem}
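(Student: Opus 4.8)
The plan is to prove the identity by downward induction on length, starting from the longest element $w_0$ and matching the recursion for the partition function given by Lemma~\ref{lemma:recurrence_eq} against the defining recursion $\G_w = \Kdd_i \G_{ws_i}$ (valid when $\ell(ws_i) = \ell(w)+1$). The first step is to rewrite Lemma~\ref{lemma:recurrence_eq} in operator form. Since $Z(\Gstates_{ws_i}; s_i\xx, \yy; \beta)$ is exactly the polynomial $Z(\Gstates_{ws_i}; \xx, \yy; \beta)$ with $s_i$ applied (the operator $s_i$ swapping only $x_i \leftrightarrow x_{i+1}$), the right-hand side of the lemma is precisely $\Kdd_i$ applied to $Z(\Gstates_{ws_i}; \xx, \yy; \beta)$. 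Thus, whenever $ws_i > w$,
\[
\beta\, Z(\Gstates_w; \xx, \yy; \beta) = \Kdd_i\, Z(\Gstates_{ws_i}; \xx, \yy; \beta),
\]
which is the same relation, up to the prefactor $\beta$, satisfied by the double Grothendieck polynomials.

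For the inductive step I would assume $Z(\Gstates_v; \xx, \yy; \beta) = \beta^{\ell(v)} \G_v$ for all $v$ with $\ell(v) > \ell(w)$, and choose any $s_i$ with $ws_i > w$, which exists for every $w \neq w_0$. Applying the displayed relation together with the induction hypothesis at $v = ws_i$, and using that $\Kdd_i$ is $\ZZ[\yy, \beta]$-linear (as it involves only $x$-operations and $s_i$ fixes $\yy$ and $\beta$), gives $\beta\, Z(\Gstates_w) = \Kdd_i\bigl(\beta^{\ell(ws_i)} \G_{ws_i}\bigr) = \beta^{\ell(w)+1} \G_w$, where the last equality uses $\ell(ws_i) = \ell(w)+1$ and the Grothendieck recursion. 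Because $Z(\Gstates_w)$ and $\G_w$ are polynomials in the integral domain $\ZZ[\xx, \yy, \beta]$, I may cancel the common factor $\beta$ to obtain $Z(\Gstates_w) = \beta^{\ell(w)} \G_w$. No well-definedness problem (independence from the choice of $s_i$) arises, since $\G_w$ and $Z(\Gstates_w)$ are each already fixed polynomials.

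It remains to settle the base case $w = w_0$, which I expect to be the only substantive point. Here I claim that $\Gstates_{w_0}$ has a \emph{unique} admissible state, whose weight I can compute directly. With the boundary data, color $c_i$ enters at the bottom of column $i$ and must leave on the right in row $w_0(i) = n+1-i$; reading inward from the outermost color $c_n$ (which is confined to the rightmost column and so is forced to run straight up column $n$ and turn out in row $1$), each $c_i$ is forced to travel up column $i$ to row $n+1-i$, turn right via a $\tt{c}_2$ vertex, and then proceed rightward, crossing each lower-indexed color it meets via an $\tt{a}^{\dagger}_2$ vertex (the constraint $c>c'$ being automatically respected). A direct check shows the covered positions are exactly those $(r,s)$ with $r+s \geq n+1$, so the vertices with $r+s \leq n$ are precisely the empty $\tt{a}_1$ vertices of weight $\beta(x_r \oplus y_s)$, while all others are turns or crossings of weight $1$. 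Multiplying yields
\[
Z(\Gstates_{w_0}; \xx, \yy; \beta) = \prod_{r+s \leq n} \beta(x_r \oplus y_s) = \beta^{\binom{n}{2}} \prod_{r+s \leq n} (x_r \oplus y_s) = \beta^{\ell(w_0)} \G_{w_0}(\xx, \yy; \beta),
\]
using $\ell(w_0) = \binom{n}{2}$ and $\G_{w_0} = \prod_{i+j \leq n}(x_i \oplus y_j)$. The main obstacle is therefore this uniqueness-and-enumeration claim for $w_0$: one must verify rigorously that the forced routing is the only admissible labeling, ruling out zigzagging detours through $\tt{c}_1$ turns, and that the empty region is exactly the staircase $\{(r,s) : r+s \leq n\}$. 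Everything else is a formal manipulation with the divided difference operator.
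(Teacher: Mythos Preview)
Your proof is correct and follows essentially the same approach as the paper: identify the recursion of Lemma~\ref{lemma:recurrence_eq} with $\Kdd_i$, verify the base case $w=w_0$ by computing the unique admissible state directly, and conclude by downward induction on length. You supply more detail than the paper does (the explicit forcing argument for uniqueness of the $w_0$ state and the remark about cancelling the common factor $\beta$), but the strategy is identical.
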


\begin{proof}
A direct computation using the definition of $\Kdd_i$ yields
\[
\G_w(\xx, \yy; \beta) = \Kdd_i \G_{w s_i}(\xx, \yy; \beta)
= \frac{(1  + \beta x_{i+1} ) \cdot \G_{w s_i}(\xx, \yy; \beta) -  (1 + \beta x_i) \cdot \G_{w s_i}(s_i \xx, \yy; \beta)}{x_i - x_{i+1}}.
\]
It is straightforward to see that
\[
Z(\Gstates_{w_0}; \xx, \yy; \beta) = \beta^{n(n-1)/2} \left(\prod_{i + j \leq n} x_i \oplus y_j \right) = \beta^{\ell(w_0)} \G_{w_0}(\xx, \yy; \beta)
\]
(see Figure~\ref{fig:ground_state} for an example).
Therefore, the claim follows by induction and Lemma~\ref{lemma:recurrence_eq}.
\end{proof}

\begin{figure}
  \begin{tikzpicture}[scale=0.75, font=\small,baseline=0]
    \foreach \i in {1,3,5,7} {
        \draw (\i,0)--(\i,8);
        \draw (0,\i)--(8,\i);
    }
    \draw[line width=0.5mm,red] (1,0) -- (1,1) -- (8,1);
    \draw[line width=0.5mm,blue] (3,0) -- (3,3) -- (8,3);
    \draw[line width=0.5mm,darkgreen] (5,0) -- (5,5) -- (8,5);
    \draw[line width=0.5mm,UQpurple] (7,0) -- (7,7) -- (8,7);
    \foreach \x in {0,2,4,6,8} {
        \foreach \y in {1,3,5,7} {
            \draw[fill=white] (\x,\y) circle (.35); \node at (\x,\y) {$0$};
        }
    }
    \foreach \i in {1,3,5,7} {
        \foreach \y in {0,2,4,6,8} {
            \draw[fill=white] (\i,\y) circle (.35); \node at (\i,\y) {$0$};
        }
        \node at (\i,8) {$0$};
        \node at (0,\i) {$0$};
    }
    
    \foreach \y in {0} {
        \draw[line width=0.5mm,red,fill=white] (1,\y) circle (.35); \node at (1,\y) {$c_1$};
    }
    \foreach \x in {2,4,6,8}{
        \draw[line width=0.5mm,red,fill=white] (\x,1) circle (.35); \node at (\x,1) {$c_1$};
    }
    \foreach \y in {0,2} {
        \draw[line width=0.5mm,blue,fill=white] (3,\y) circle (.35); \node at (3,\y) {$c_2$};
    }
    \foreach \x in {4,6,8}{
        \draw[line width=0.5mm,blue,fill=white] (\x,3) circle (.35); \node at (\x,3) {$c_2$};
    }
    \foreach \y in {0,2,4} {
        \draw[line width=0.5mm,darkgreen,fill=white] (5,\y) circle (.35); \node at (5,\y) {$c_3$};
    }
    \foreach \x in {6,8}{
        \draw[line width=0.5mm,darkgreen,fill=white] (\x,5) circle (.35); \node at (\x,5) {$c_3$};
    }
    \foreach \y in {0,2,4,6} {
        \draw[line width=0.5mm,UQpurple,fill=white] (7,\y) circle (.35); \node at (7,\y) {$c_4$};
    }
    \foreach \x in {8}{
        \draw[line width=0.5mm,UQpurple,fill=white] (\x,7) circle (.35); \node at (\x,7) {$c_4$};
    }

    \foreach \i in {1,3,5,7} {
        \foreach \y in {1,2,3,4} {
            \path[fill=white] (\i,9-2*\y) circle (.3);
            \node at (\i,9-2*\y) {$x_{\y}$};
        }
    }
    \foreach \i in {1,2,3,4}{
        \node at (2*\i-1,8.8) {$ \i$};
        \node at(-.75,9-2*\i) {$ \i$};
    }
  \end{tikzpicture}
  \caption{The unique state for the colored model $\Gstates_{w_0}$ with $n = 4$ and colors ${\color{red} c_1} > {\color{blue} c_2} > {\color{darkgreen} c_3} > {\color{UQpurple} c_4}$.}
  \label{fig:ground_state}
\end{figure}

\begin{figure}
  \begin{tikzpicture}[scale=0.75, font=\small,baseline=0]
    \foreach \i in {1,3,5,7} {
        \draw (\i,0)--(\i,8);
        \draw (0,\i)--(8,\i);
    }
    \draw[line width=0.5mm,red] (1,0) -- (1,3) -- (5,3) -- (5,5) -- (8,5);
    \draw[line width=0.5mm,blue] (3,0) -- (3,5) -- (5,5) -- (5,7) -- (8,7);
    \draw[line width=0.5mm,darkgreen] (5,0) -- (5,1) -- (8,1);
    \draw[line width=0.5mm,UQpurple] (7,0) -- (7,3) -- (8,3);
    \foreach \x in {0,2,4,6,8} {
        \foreach \y in {1,3,5,7} {
            \draw[fill=white] (\x,\y) circle (.35); \node at (\x,\y) {$0$};
        }
    }
    \foreach \i in {1,3,5,7} {
        \foreach \y in {0,2,4,6,8} {
            \draw[fill=white] (\i,\y) circle (.35); \node at (\i,\y) {$0$};
        }
        \node at (\i,8) {$0$};
        \node at (0,\i) {$0$};
    }
    
    \foreach \x/\y in {1/0,1/2,2/3,4/3,5/4,6/5,8/5} {
        \draw[line width=0.5mm,red,fill=white] (\x,\y) circle (.35); \node at (\x,\y) {$c_1$};
    }
    \foreach \x/\y in {3/0,3/2,3/4,4/5,5/6,6/7,8/7} {
        \draw[line width=0.5mm,blue,fill=white] (\x,\y) circle (.35); \node at (\x,\y) {$c_2$};
    }
    \foreach \y in {0} {
        \draw[line width=0.5mm,darkgreen,fill=white] (5,\y) circle (.35); \node at (5,\y) {$c_3$};
    }
    \foreach \x in {6,8}{
        \draw[line width=0.5mm,darkgreen,fill=white] (\x,1) circle (.35); \node at (\x,1) {$c_3$};
    }
    \foreach \y in {0,2} {
        \draw[line width=0.5mm,UQpurple,fill=white] (7,\y) circle (.35); \node at (7,\y) {$c_4$};
    }
    \foreach \x in {8}{
        \draw[line width=0.5mm,UQpurple,fill=white] (\x,3) circle (.35); \node at (\x,3) {$c_4$};
    }

    \foreach \i in {1,3,5,7} {
        \foreach \y in {1,2,3,4} {
            \path[fill=white] (\i,9-2*\y) circle (.3);
            \node at (\i,9-2*\y) {$x_{\y}$};
        }
    }
    \foreach \i in {1,2,3,4}{
        \node at (2*\i-1,8.8) {$ \i$};
        \node at(-.75,9-2*\i) {$ \i$};
    }
  \end{tikzpicture}
  \qquad
  \begin{tikzpicture}[scale=0.75, font=\small,baseline=0]
    \foreach \i in {1,3,5,7} {
        \draw (\i,0)--(\i,8);
        \draw (0,\i)--(8,\i);
    }
    \draw[line width=0.5mm,red] (1,0) -- (1,5) -- (5,5) -- (5,7) -- (8,7);
    \draw[line width=0.5mm,blue] (3,0) -- (3,1) -- (8,1);
    \draw[line width=0.5mm,darkgreen] (5,0) -- (5,3) -- (8,3);
    \draw[line width=0.5mm,UQpurple] (7,0) -- (7,5) -- (8,5);
    \foreach \x in {0,2,4,6,8} {
        \foreach \y in {1,3,5,7} {
            \draw[fill=white] (\x,\y) circle (.35); \node at (\x,\y) {$0$};
        }
    }
    \foreach \i in {1,3,5,7} {
        \foreach \y in {0,2,4,6,8} {
            \draw[fill=white] (\i,\y) circle (.35); \node at (\i,\y) {$0$};
        }
        \node at (\i,8) {$0$};
        \node at (0,\i) {$0$};
    }
    
    \foreach \x/\y in {1/0,1/2,1/4,2/5,4/5,5/6,6/7,8/7} {
        \draw[line width=0.5mm,red,fill=white] (\x,\y) circle (.35); \node at (\x,\y) {$c_1$};
    }
    \foreach \y in {0} {
        \draw[line width=0.5mm,blue,fill=white] (3,\y) circle (.35); \node at (3,\y) {$c_2$};
    }
    \foreach \x in {4,6,8}{
        \draw[line width=0.5mm,blue,fill=white] (\x,1) circle (.35); \node at (\x,1) {$c_2$};
    }
    \foreach \y in {0,2} {
        \draw[line width=0.5mm,darkgreen,fill=white] (5,\y) circle (.35); \node at (5,\y) {$c_3$};
    }
    \foreach \x in {6,8}{
        \draw[line width=0.5mm,darkgreen,fill=white] (\x,3) circle (.35); \node at (\x,3) {$c_3$};
    }
    \foreach \y in {0,2,4} {
        \draw[line width=0.5mm,UQpurple,fill=white] (7,\y) circle (.35); \node at (7,\y) {$c_4$};
    }
    \foreach \x in {8}{
        \draw[line width=0.5mm,UQpurple,fill=white] (\x,5) circle (.35); \node at (\x,5) {$c_4$};
    }

    \foreach \i in {1,3,5,7} {
        \foreach \y in {1,2,3,4} {
            \path[fill=white] (\i,9-2*\y) circle (.3);
            \node at (\i,9-2*\y) {$x_{\y}$};
        }
    }
    \foreach \i in {1,2,3,4}{
        \node at (2*\i-1,8.8) {$ \i$};
        \node at(-.75,9-2*\i) {$ \i$};
    }
  \end{tikzpicture}
  \caption{A state in $\Gstates_{s_1 s_3}$ (left) and $\Gstates_{s_2 s_3 s_2}$ (right) with $n = 4$ and colors ${\color{red} c_1} > {\color{blue} c_2} > {\color{darkgreen} c_3} > {\color{UQpurple} c_4}$.}
  \label{fig:colored_state}
\end{figure}

We can also recover the following well-known symmetry of double Grothendieck polynomials, which can be seen from the formula
\[
\G_w(\xx,\yy; \beta) = \sum_{\substack{uv = w \\ \ell(u) + \ell(v) = \ell(w)}} \G_u(\xx; \beta) \G_v(\yy; \beta)
\]
from~\cite[Thm.~8.1]{FK96} (see also~\cite{FK94}) or from the interpretation of $\G_w(\xx, \yy; \beta)$ as a sum over usual pipe dreams~\cite{BB93,FK94,FK96,KM04}.\footnote{These are also known as RC-graphs.}
The authors thank Anatol Kirillov for noting this.
A colored lattice model proof of this was first given in~\cite[Prop.~9.1]{frozenpipes}.

\begin{corollary}
\label{cor:symmetry}
We have
\[
Z(\states_w; \xx, \yy; \beta) = Z(\states_{w^{-1}}; \yy, \xx; \beta).
\]
Moreover, we have $\G_w(\xx, \yy; \beta) = \G_{w^{-1}}(\yy, \xx; \beta)$.
\end{corollary}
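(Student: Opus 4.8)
The plan is to deduce both identities from a single reflection of the lattice, using Theorem~\ref{thm:5vertex_atoms} to pass between them. Since $\ell(w)=\ell(w^{-1})$, Theorem~\ref{thm:5vertex_atoms} shows that the asserted equality of partition functions is equivalent to $\G_w(\xx,\yy;\beta)=\G_{w^{-1}}(\yy,\xx;\beta)$ after cancelling the common factor $\beta^{\ell(w)}$; so it is enough to prove the partition-function identity, and the statement about $\G_w$ then follows for free. (Here I read the left-hand model as the double Grothendieck model $\Gstates_w$.)

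To prove $Z(\Gstates_w;\xx,\yy;\beta)=Z(\Gstates_{w^{-1}};\yy,\xx;\beta)$ I would reflect each configuration across the main (northwest--southeast) diagonal, interchanging the $i$-th row with the $i$-th column. First I would record the action on the vertices of Figure~\ref{fig:bumpless_weights}: the reflection fixes $\tt{a}_1$, $\tt{c}_1$, and $\tt{c}_2$, interchanges $\tt{b}_1\leftrightarrow\tt{b}_2$, and sends crossings to crossings and bumps ($\tt{a}_2$) to bumps. Every weight is preserved, since the only nontrivial weights are $\beta(x\oplus y)$, $1+\beta(x\oplus y)$, and $x\oplus y$, which depend only on the symmetric combination $x\oplus y=y\oplus x$; thus a vertex in row $i$, column $j$ (with weight built from $x_i\oplus y_j$) is carried to a vertex in row $j$, column $i$ whose weight inside $Z(\Gstates_{w^{-1}};\yy,\xx;\beta)$ is built from $y_j\oplus x_i=x_i\oplus y_j$. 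I would then track the boundary: the reflection fixes the all-$0$ left and top edges and interchanges the bottom boundary $\cc$ with the right boundary $w\cc$, so that the reflected configuration joins bottom column $j$ to right row $w(j)$, which is precisely the connectivity imposed in $\Gstates_{w^{-1}}$.

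The main obstacle is admissibility, and this is the step I expect to require care: the reflection does not send an admissible colored state to an admissible colored state, because it turns the crossing $\tt{a}^{\dagger}_2$, in which the larger color runs horizontally, into a crossing in which the larger color runs vertically, and one checks that no single relabeling of the colors can simultaneously repair the crossings and the bumps. The resolution is that both the weights and the admissibility conditions are color-blind. Since the only allowed turns are $\tt{c}_1$ and $\tt{c}_2$, every strand is a monotone lattice path from the bottom edge to the right edge, the coloring is forced by the boundary, and a short analysis of two such paths shows that the color constraints at $\tt{a}_2$ and $\tt{a}^{\dagger}_2$ are equivalent to the purely geometric conditions that two strands cross at most once (automatic for monotone paths) and that two strands bump only if they also cross. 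Both conditions are manifestly invariant under the diagonal reflection, so the reflection is a weight-preserving bijection between the admissible configurations contributing to $Z(\Gstates_w;\xx,\yy;\beta)$ and those contributing to $Z(\Gstates_{w^{-1}};\yy,\xx;\beta)$.

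Combining the three steps yields the first identity, and Theorem~\ref{thm:5vertex_atoms} then gives $\G_w(\xx,\yy;\beta)=\G_{w^{-1}}(\yy,\xx;\beta)$. As an independent check I would note that this last equality is classical: transposing a pipe dream for $w$ produces a pipe dream for $w^{-1}$ while interchanging the roles of $\xx$ and $\yy$, which is the usual combinatorial explanation of the symmetry.
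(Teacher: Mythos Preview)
Your proof is correct and follows the same core idea as the paper: reflect the model across the main diagonal, which interchanges rows with columns and hence $\xx$ with $\yy$, while the vertex weights are symmetric in $x\oplus y$. The paper's proof is a terse three sentences; the point you take care over---that the reflected colored configuration is not literally admissible because the larger color at a crossing now runs vertically---is exactly what the paper addresses by saying the Demazure product is taken ``in the opposite direction, which is equivalent to reversing the ordering on the colors,'' yielding an equivalent model. Your alternative resolution (the coloring is forced by the boundary, and the color constraints at $\tt{a}_2$ and $\tt{a}_2^{\dagger}$ reduce to the reflection-invariant geometric conditions that strands cross at most once and bump only if they also cross) is a correct and arguably more transparent way to say the same thing.
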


\begin{proof}
This can be seen by reflecting the model across the main diagonal (thus interchanging $x_i \leftrightarrow y_i$).
Note that we are taking the Demazure product in the opposite direction, which is equivalent to reversing the ordering on the colors.
However, this results in an equivalent colored lattice model.
\end{proof}

\subsection{Bumpless pipe dreams}
\label{sec:bumpless_connection}

We now relate the states our model to bumpless pipe dreams given in~\cite{Weigandt20}.
So we obtain a new proof of~\cite[Thm.~1.1]{Weigandt20} that double Grothendieck polynomials are a sum over bumpless pipe dreams by a weight preserving bijection as a corollary of Theorem~\ref{thm:5vertex_atoms}.
In~\cite[Eq.~(3.7)]{Weigandt20}, we see that each uncolored vertex precisely corresponds to one of the tiles that defines a bumpless pipe dream.\footnote{The states of the uncolored model are also equivalent to alternating sign matrices (see~\cite[Sec.~7]{EKLP92}).}
The key $\partial P$ of a bumpless pipe dream $P$ is described using the Demazure product (where we instead consider $s_i^2 = s_i$) of the corresponding product of simple transpositions, which we describe pictorially as follows.
Color each line starting from the bottom as $1, \dotsc, n$ (or $c_1, \dotsc, c_n$) from left-to-right where each color must cross (\textit{i.e}, look like $\tt{a}_2^{\dagger}$ or with the colors swapped or equivalently no vertices look like $\tt{a}_2$ or with its colors swapped).
Moving from the bottom-left to the top-right along diagonals, every time we see a local configuration that looks like the color flipped $\tt{a}_2^{\dagger}$, we replace it with $\tt{a}_2$ (see also~\cite[Eq.~(2.4), Lemma~2.1]{Weigandt20} for another pictorial description).
Thus, for any permutation $w$, the states of our colored vertex model $\Gstates_w$ correspond precisely to those bumpless pipe dreams whose key is $w$ after forgetting about the colors.

\section{The semidual vertex model}
\label{sec:dual_model}

In this section, we construct a model using the $L$-matrix for the extended Lascoux polynomial given in Figure~\ref{fig:colored_weights_Lpoly} and relating it to the case when $w$ is a vexillary permutation.
We will also draw the states using tiles describing the connections of the corresponding wiring diagram.
Thus the $L$-matrix given in Figure~\ref{fig:bumpless_weights} become the tiles
\[
\begin{tikzpicture}[scale=0.5,baseline=10]
\draw[very thin,gray] (0,0) rectangle (2,2);
\end{tikzpicture}\,,
\qquad
\begin{tikzpicture}[scale=0.5,baseline=10]
\draw[very thin,gray] (0,0) rectangle (2,2);
\draw[very thick, red] (1,0) -- (1,1) -- (2,1);
\draw[very thick, blue] (0,1) -- (1,1) -- (1,2);
\end{tikzpicture}\,,
\qquad
\begin{tikzpicture}[scale=0.5,baseline=10]
\draw[very thin,gray] (0,0) rectangle (2,2);
\draw[very thick, blue] (1,0) -- (1,2);
\draw[very thick, red] (0,1) -- (2,1);
\end{tikzpicture}\,,
\qquad
\begin{tikzpicture}[scale=0.5,baseline=10]
\draw[very thin,gray] (0,0) rectangle (2,2);
\draw[very thick, UQgold] (1,0) -- (1,2);
\end{tikzpicture}\,,
\qquad
\begin{tikzpicture}[scale=0.5,baseline=10]
\draw[very thin,gray] (0,0) rectangle (2,2);
\draw[very thick, UQgold] (0,1) -- (2,1);
\end{tikzpicture}\,,
\qquad
\begin{tikzpicture}[scale=0.5,baseline=10]
\draw[very thin,gray] (0,0) rectangle (2,2);
\draw[very thick, UQgold] (0,1) -- (1,1) -- (1,2);
\end{tikzpicture}\,,
\qquad
\begin{tikzpicture}[scale=0.5,baseline=10]
\draw[very thin,gray] (0,0) rectangle (2,2);
\draw[very thick, UQgold] (1,0) -- (1,1) -- (2,1);
\end{tikzpicture}\,.
\]

\subsection{The semidual model}

By~\cite[Lemma~7.2]{Weigandt20}, a bumpless pipe dream state has no $\tt{a}_2$ vertex if and only if $w$ is vexillary.
Thus we set the corresponding Boltzmann weight to $0$.
This allows us to forget about the colors when drawing the lattice models, but it does not quite equate the colored model with the uncolored model as we still need to keep track of which strands cross to encode the permutation $w$.
This distinction is important as the model needs to (implicitly) remain colored in order to get the correct partition function corresponding to $w$.
Let $\Gstates$ denote the corresponding uncolored model.

\begin{figure}
\[
\begin{array}{cccccc}
\toprule
  \overline{\tt{a}}_1&\overline{\tt{a}}_2&\overline{\tt{b}}_1&\overline{\tt{b}}_2&\overline{\tt{c}}_1&\overline{\tt{c}}_2\\
\midrule
\begin{tikzpicture}
\coordinate (a) at (-.75, 0);
\coordinate (b) at (0, .75);
\coordinate (c) at (.75, 0);
\coordinate (d) at (0, -.75);
\coordinate (aa) at (-.75,.5);
\coordinate (cc) at (.75,.5);
\draw (a)--(0,0);
\draw (b)--(0,0);
\draw (c)--(0,0);
\draw (d)--(0,0);
\draw[fill=white] (a) circle (.25);
\draw[fill=white] (b) circle (.25);
\draw[fill=white] (c) circle (.25);
\draw[fill=white] (d) circle (.25);
\node at (0,1) { };
\node at (a) {$0$};
\node at (b) {$0$};
\node at (c) {$0$};
\node at (d) {$0$};
\path[fill=white] (0,0) circle (.2);
\node at (0,0) {$x$};
\end{tikzpicture} &
\begin{tikzpicture}
\coordinate (a) at (-.75, 0);
\coordinate (b) at (0, .75);
\coordinate (c) at (.75, 0);
\coordinate (d) at (0, -.75);
\coordinate (aa) at (-.75,.5);
\coordinate (cc) at (.75,.5);
\draw[line width=0.5mm] (a)--(0,0);
\draw[line width=0.5mm] (b)--(0,0);
\draw[line width=0.5mm] (c)--(0,0);
\draw[line width=0.5mm] (d)--(0,0);
\draw[fill=white] (a) circle (.25);
\draw[fill=white] (b) circle (.25);
\draw[fill=white] (c) circle (.25);
\draw[fill=white] (d) circle (.25);
\node at (0,1) { };
\node at (a) {$1$};
\node at (b) {$1$};
\node at (c) {$1$};
\node at (d) {$1$};
\path[fill=white] (0,0) circle (.2);
\node at (0,0) {$x$};
\end{tikzpicture}&
\begin{tikzpicture}
\coordinate (a) at (-.75, 0);
\coordinate (b) at (0, .75);
\coordinate (c) at (.75, 0);
\coordinate (d) at (0, -.75);
\coordinate (aa) at (-.75,.5);
\coordinate (cc) at (.75,.5);
\draw (a)--(0,0);
\draw[line width=0.5mm] (b)--(0,0);
\draw (c)--(0,0);
\draw[line width=0.5mm] (d)--(0,0);
\draw[fill=white] (a) circle (.25);
\draw[fill=white] (b) circle (.25);
\draw[fill=white] (c) circle (.25);
\draw[fill=white] (d) circle (.25);
\node at (0,1) { };
\node at (a) {$0$};
\node at (b) {$1$};
\node at (c) {$0$};
\node at (d) {$1$};
\path[fill=white] (0,0) circle (.2);
\node at (0,0) {$x$};
\end{tikzpicture}&
\begin{tikzpicture}
\coordinate (a) at (-.75, 0);
\coordinate (b) at (0, .75);
\coordinate (c) at (.75, 0);
\coordinate (d) at (0, -.75);
\coordinate (aa) at (-.75,.5);
\coordinate (cc) at (.75,.5);
\draw[line width=0.5mm] (a)--(0,0);
\draw (b)--(0,0);
\draw[line width=0.5mm] (c)--(0,0);
\draw (d)--(0,0);
\draw[fill=white] (a) circle (.25);
\draw[fill=white] (b) circle (.25);
\draw[fill=white] (c) circle (.25);
\draw[fill=white] (d) circle (.25);
\node at (0,1) { };
\node at (a) {$1$};
\node at (b) {$0$};
\node at (c) {$1$};
\node at (d) {$0$};
\path[fill=white] (0,0) circle (.2);
\node at (0,0) {$x$};
\end{tikzpicture}&
\begin{tikzpicture}
\coordinate (a) at (-.75, 0);
\coordinate (b) at (0, .75);
\coordinate (c) at (.75, 0);
\coordinate (d) at (0, -.75);
\coordinate (aa) at (-.75,.5);
\coordinate (cc) at (.75,.5);
\draw (a)--(0,0);
\draw[line width=0.5mm] (b)--(0,0);
\draw[line width=0.5mm] (c)--(0,0);
\draw (d)--(0,0);
\draw[fill=white] (a) circle (.25);
\draw[fill=white] (b) circle (.25);
\draw[fill=white] (c) circle (.25);
\draw[fill=white] (d) circle (.25);
\node at (0,1) { };
\node at (a) {$0$};
\node at (b) {$1$};
\node at (c) {$1$};
\node at (d) {$0$};
\path[fill=white] (0,0) circle (.2);
\node at (0,0) {$x$};
\end{tikzpicture}&
\begin{tikzpicture}
\coordinate (a) at (-.75, 0);
\coordinate (b) at (0, .75);
\coordinate (c) at (.75, 0);
\coordinate (d) at (0, -.75);
\coordinate (aa) at (-.75,.5);
\coordinate (cc) at (.75,.5);
\draw[line width=0.5mm] (a)--(0,0);
\draw (b)--(0,0);
\draw (c)--(0,0);
\draw[line width=0.5mm] (d)--(0,0);
\draw[fill=white] (a) circle (.25);
\draw[fill=white] (b) circle (.25);
\draw[fill=white] (c) circle (.25);
\draw[fill=white] (d) circle (.25);
\node at (0,1) { };
\node at (a) {$1$};
\node at (b) {$0$};
\node at (c) {$0$};
\node at (d) {$1$};
\path[fill=white] (0,0) circle (.2);
\node at (0,0) {$x$};
\end{tikzpicture} \\
\midrule
  1 & 1 & 1 & \beta(x \oplus y) & 1 + \beta(x \oplus y) & 1 \\
\bottomrule
\end{array}\]
\caption{Boltzmann weights of the semidual model.}
\label{fig:dual_model_wts}
\end{figure}

We define the \defn{semidual model} $\Dstates$ to be the colored lattice model on an $n \times n$ grid using the $L$-matrix given in Figure~\ref{fig:dual_model_wts} and boundary conditions as follows.
The left (resp.\ bottom) boundary condition is the colors $c_n > \cdots > c_1$ from top-to-bottom (resp.\ left-to-right); the top and right boundary edges are all $0$.
Since (again) no colors cross, we can forget the coloring, but unlike before, we obtain an equivalent model as there is no dependency on $w$.
The following proposition is straightforward.

\begin{proposition}
\label{prop:map_semidual}
The map $\Phi \colon \Gstates \to \Dstates$ given by
\begin{align*}
\begin{tikzpicture}[scale=0.5,baseline=10]
\draw[very thin,gray] (0,0) rectangle (2,2);
\end{tikzpicture}
& \mapsto
\begin{tikzpicture}[scale=0.5,baseline=10]
\draw[very thin,gray] (0,0) rectangle (2,2);
\draw[very thick, darkred] (0,1) -- (2,1);
\end{tikzpicture}\,,
&
\begin{tikzpicture}[scale=0.5,baseline=10]
\draw[very thin,gray] (0,0) rectangle (2,2);
\draw[very thick, blue] (1,0) -- (1,2);
\end{tikzpicture}
& \mapsto
\begin{tikzpicture}[scale=0.5,baseline=10]
\draw[very thin,gray] (0,0) rectangle (2,2);
\draw[very thick, darkred] (1,2) -- (2,1);
\draw[very thick, darkred] (0,1) -- (1,0);
\end{tikzpicture}\,,
&
\begin{tikzpicture}[scale=0.5,baseline=10]
\draw[very thin,gray] (0,0) rectangle (2,2);
\draw[very thick, blue] (0,1) -- (1,1) -- (1,2);
\end{tikzpicture}
& \mapsto
\begin{tikzpicture}[scale=0.5,baseline=10]
\draw[very thin,gray] (0,0) rectangle (2,2);
\draw[very thick, darkred] (1,2) -- (2,1);
\end{tikzpicture}\,,
\\
\begin{tikzpicture}[scale=0.5,baseline=10]
\draw[very thin,gray] (0,0) rectangle (2,2);
\draw[very thick, blue] (1,0) -- (1,2);
\draw[very thick, blue] (0,1) -- (2,1);
\end{tikzpicture}
& \mapsto
\begin{tikzpicture}[scale=0.5,baseline=10]
\draw[very thin,gray] (0,0) rectangle (2,2);
\draw[very thick, darkred] (1,0) -- (1,2);
\end{tikzpicture}\,,
&
\begin{tikzpicture}[scale=0.5,baseline=10]
\draw[very thin,gray] (0,0) rectangle (2,2);
\draw[very thick, blue] (0,1) -- (2,1);
\end{tikzpicture}
& \mapsto
\begin{tikzpicture}[scale=0.5,baseline=10]
\draw[very thin,gray] (0,0) rectangle (2,2);
\end{tikzpicture}\,,
&
\begin{tikzpicture}[scale=0.5,baseline=10]
\draw[very thin,gray] (0,0) rectangle (2,2);
\draw[very thick, blue] (1,0) -- (1,1) -- (2,1);
\end{tikzpicture}
& \mapsto
\begin{tikzpicture}[scale=0.5,baseline=10]
\draw[very thin,gray] (0,0) rectangle (2,2);
\draw[very thick, darkred] (0,1) -- (1,0);
\end{tikzpicture}\,,
\end{align*}
is a bijection.
\end{proposition}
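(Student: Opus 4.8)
The plan is to recognize that, although $\Phi$ is presented one tile at a time, all six local replacements are governed by a single involutive rule on edge labels; once this is seen, verifying that $\Phi$ is a bijection of states reduces to a finite inspection together with a short boundary computation. First I would record the edge-level description. Reading off the six replacements in the statement (and recalling that, the coloring having been forgotten, every edge now carries a label in $\{0,1\}$), one sees that $\Phi$ exchanges $0 \leftrightarrow 1$ on every \emph{horizontal} edge and leaves the label on every \emph{vertical} edge unchanged. This is exactly the ``swap $0 \leftrightarrow 1$ on the horizontal (auxiliary) lines'' used to define the semidual model, now realized vertex by vertex.

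Next I would confirm that the tile-local map is a bijection from the admissible vertices of $\Gstates$ onto those of $\Dstates$. In $\Gstates$ the vertex $\tt{a}_2$ has Boltzmann weight $0$ and so never appears; hence the admissible tiles are precisely the six in the domain of $\Phi$, namely $\tt{a}_1,\tt{b}_1,\tt{c}_1,\tt{a}_2^{\dagger},\tt{b}_2,\tt{c}_2$. Applying the horizontal swap to their $\{0,1\}$-labelings in Figure~\ref{fig:bumpless_weights} and comparing with Figure~\ref{fig:dual_model_wts} yields the dictionary
\[
\tt{a}_1 \mapsto \overline{\tt{b}}_2,\quad \tt{b}_1 \mapsto \overline{\tt{a}}_2,\quad \tt{c}_1 \mapsto \overline{\tt{c}}_1,\quad \tt{a}_2^{\dagger} \mapsto \overline{\tt{b}}_1,\quad \tt{b}_2 \mapsto \overline{\tt{a}}_1,\quad \tt{c}_2 \mapsto \overline{\tt{c}}_2,
\]
which is a bijection onto the six admissible vertices of the semidual model; comparing the weight rows of the two figures shows that each arrow preserves the Boltzmann weight, so $\Phi$ is in fact weight preserving.

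The third step is the compatibility that lets these local replacements assemble into a map of \emph{states}. Because the edge rule depends only on the orientation of an edge and its current label, any edge shared by two adjacent tiles is transformed identically from either side; thus $\Phi$ carries an admissible edge-labeling of $\Gstates$ to a consistent edge-labeling whose every vertex is admissible in $\Dstates$ by the dictionary above. It then remains to match boundaries: the swap sends the left boundary $0 \mapsto 1$ and the right boundary (the colors $w\cc$, i.e.\ $1$) $\mapsto 0$, while the vertical edges are fixed so the bottom boundary (the colors $\cc$) and the top boundary $0$ are unchanged. This is exactly the boundary data defining $\Dstates$, so $\Phi(\Gstates) \subseteq \Dstates$.

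Finally, bijectivity is immediate: running the same construction backwards, using the inverse tile dictionary together with the horizontal swap, defines a map $\Dstates \to \Gstates$, and since the horizontal swap is an involution and the tile correspondence is a bijection, this is a two-sided inverse of $\Phi$. The only point requiring genuine attention is the finite verification of the tile dictionary, together with confirming that the diagonal-segment drawing of the $\Dstates$ tiles really depicts the claimed $\{0,1\}$ edge-labelings (a strand is drawn between the midpoints of the two edges that carry the label $1$); everything else is formal, which is why the proposition is stated as straightforward.
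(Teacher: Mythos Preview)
Your proof is correct and follows exactly the approach the paper has in mind: the paper declares the proposition ``straightforward'' and immediately afterwards remarks that $\Phi$ is defined by interchanging $0 \leftrightarrow 1$ on the horizontal (auxiliary) edges, which is precisely the edge-level description you identify and verify. Your additional check that the tile dictionary is weight preserving is a useful bonus (implicitly used later), and the only very minor quibble is that in the uncolored model the cross tile is simply a single vertex type rather than ``$\tt{a}_2^{\dagger}$ surviving after $\tt{a}_2$ is killed,'' but this does not affect the argument.
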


We note that the map in Proposition~\ref{prop:map_semidual} is defined by interchanging $0 \leftrightarrow 1$ on the horizontal (auxiliary space) component between the models.
This is why we refer to this model as the semidual model.

\begin{example}
\label{ex:4x4_semidual}
We apply the bijection defined in Proposition~\ref{prop:map_semidual} to the state given in Figure~\ref{fig:colored_state}(right):
\[
\begin{tikzpicture}[scale=0.4,baseline=40]
\draw[very thin,gray] (0,0) grid[step=2] (8,8);
\draw[very thick, blue] (1,0) -- (1,5) -- (5,5) -- (5,7) -- (8,7);
\draw[very thick, blue] (3,0) -- (3,1) -- (8,1);
\draw[very thick, blue] (5,0) -- (5,3) -- (8,3);
\draw[very thick, blue] (7,0) -- (7,5) -- (8,5);
\end{tikzpicture}
\longmapsto
\begin{tikzpicture}[scale=0.4,baseline=40]
\draw[very thin,gray] (0,0) grid[step=2] (8,8);
\draw[very thick, darkred] (0,7) -- (4,7) -- (7,4) -- (7,0);
\draw[very thick, darkred] (0,5) -- (2,3) -- (4,3) -- (5,2) -- (5,0);
\draw[very thick, darkred] (0,3) -- (3,0);
\draw[very thick, darkred] (0,1) -- (1,0);
\end{tikzpicture}\,.
\]
\end{example}

\begin{figure}
\[
\begin{array}{c@{\qquad}c@{\qquad}c@{\qquad}c@{\qquad}c}
\toprule
\begin{tikzpicture}[scale=0.7]
\draw (0,0) to [out = 0, in = 180] (2,2);
\draw (0,2) to [out = 0, in = 180] (2,0);
\draw[fill=white] (0,0) circle (.35);
\draw[fill=white] (0,2) circle (.35);
\draw[fill=white] (2,0) circle (.35);
\draw[fill=white] (2,2) circle (.35);
\node at (0,0) {$1$};
\node at (0,2) {$1$};
\node at (2,2) {$1$};
\node at (2,0) {$1$};
\node at (2,0) {$1$};
\path[fill=white] (1,1) circle (.3);
\node at (1,1) {$x_i,x_j$};
\end{tikzpicture}&
\begin{tikzpicture}[scale=0.7]
\draw (0,0) to [out = 0, in = 180] (2,2);
\draw (0,2) to [out = 0, in = 180] (2,0);
\draw[fill=white] (0,0) circle (.35);
\draw[fill=white] (0,2) circle (.35);
\draw[fill=white] (2,0) circle (.35);
\draw[fill=white] (2,2) circle (.35);
\node at (0,0) {$0$};
\node at (0,2) {$0$};
\node at (2,2) {$0$};
\node at (2,0) {$0$};
\path[fill=white] (1,1) circle (.3);
\node at (1,1) {$x_i,x_j$};
\end{tikzpicture}&
\begin{tikzpicture}[scale=0.7]
\draw (0,0) to [out = 0, in = 180] (2,2);
\draw (0,2) to [out = 0, in = 180] (2,0);
\draw[fill=white] (0,0) circle (.35);
\draw[fill=white] (0,2) circle (.35);
\draw[fill=white] (2,0) circle (.35);
\draw[fill=white] (2,2) circle (.35);
\node at (0,0) {$1$};
\node at (0,2) {$0$};
\node at (2,2) {$1$};
\node at (2,0) {$0$};
\path[fill=white] (1,1) circle (.3);
\node at (1,1) {$x_i,x_j$};
\end{tikzpicture}&
\begin{tikzpicture}[scale=0.7]
\draw (0,0) to [out = 0, in = 180] (2,2);
\draw (0,2) to [out = 0, in = 180] (2,0);
\draw[fill=white] (0,0) circle (.35);
\draw[fill=white] (0,2) circle (.35);
\draw[fill=white] (2,0) circle (.35);
\draw[fill=white] (2,2) circle (.35);
\node at (0,0) {$0$};
\node at (0,2) {$1$};
\node at (2,2) {$1$};
\node at (2,0) {$0$};
\path[fill=white] (1,1) circle (.3);
\node at (1,1) {$x_i,x_j$};
\end{tikzpicture}&
\begin{tikzpicture}[scale=0.7]
\draw (0,0) to [out = 0, in = 180] (2,2);
\draw (0,2) to [out = 0, in = 180] (2,0);
\draw[fill=white] (0,0) circle (.35);
\draw[fill=white] (0,2) circle (.35);
\draw[fill=white] (2,0) circle (.35);
\draw[fill=white] (2,2) circle (.35);
\node at (0,0) {$1$};
\node at (0,2) {$0$};
\node at (2,2) {$0$};
\node at (2,0) {$1$};
\path[fill=white] (1,1) circle (.3);
\node at (1,1) {$x_i,x_j$};
\end{tikzpicture}\\
\midrule
1 + \beta x_j & 1 + \beta x_i & \beta (x_j - x_i) & 1 + \beta x_j & 1 + \beta x_i \\
\bottomrule
\end{array}
\]
\caption{The $R$-matrix for the semidual model.}
\label{fig:dual_R_matrix}
\end{figure}

\begin{proposition}
The uncolored semidual model is an integrable model with the $R$-matrix not depending on the parameters $\yy$.
\end{proposition}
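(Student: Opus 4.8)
The plan is to deduce both integrability and the $\yy$-independence of the $R$-matrix from the corresponding facts for the double Grothendieck model (Proposition~\ref{prop:double_Grothendieck_integrable}) by transporting them across the particle-hole involution of Proposition~\ref{prop:map_semidual}.

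\emph{Step 1: identify the two $L$-matrices.} Reading Proposition~\ref{prop:map_semidual} at the level of local weights, the uncolored semidual $L$-matrix $\tilde L$ of Figure~\ref{fig:dual_model_wts} is the conjugate of the uncolored $L$-matrix $L$ of Figure~\ref{fig:bumpless_weights} by the involution $X$ that interchanges the labels $0 \leftrightarrow 1$ on the two horizontal (auxiliary) half-edges; in operator form $\tilde L = X_a L X_a$, with $X_a$ acting on the auxiliary space $W_a \cong \CC^2$ and trivially on the quantum space. One verifies this vertex by vertex: the weight $\beta(x \oplus y)$ moves from $\tt{a}_1$ to $\overline{\tt{b}}_2$ and $1 + \beta(x \oplus y)$ from $\tt{c}_1$ to $\overline{\tt{c}}_1$, while the four unit weights are permuted among the remaining vertices. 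Note that $X_a$ is a genuine particle-hole exchange on the horizontal lines, so it alters which configurations count as ``occupied,'' but it is nonetheless an honest similarity transformation of $L$.

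\emph{Step 2: transport integrability and extract $\yy$-independence.} Write the $RLL$ relation of Proposition~\ref{prop:double_Grothendieck_integrable} as $R_{12} L_1 L_2 = L_2 L_1 R_{12}$, where $1,2$ index the two auxiliary lines and the common quantum line is suppressed. Substituting $L_i = X_i \tilde L_i X_i$ and using that $X_1$ and $X_2$ are commuting involutions, each commuting with the $L$-factor on the other auxiliary line, the relation becomes $\tilde R_{12} \tilde L_1 \tilde L_2 = \tilde L_2 \tilde L_1 \tilde R_{12}$ with
\[
\tilde R_{12} = (X \otimes X)\, R_{12}\, (X \otimes X).
\]
Thus the semidual model is integrable with $R$-matrix $\tilde R_{12}$. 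Every Boltzmann weight of $R_{12}$ in Figure~\ref{fig:bumpless_R_matrix} is a polynomial in $\beta, x_i, x_j$ with no occurrence of $\yy$, and $X$ is a constant involution; hence $\tilde R_{12}$ is independent of $\yy$, which is the assertion of the proposition. This is consistent with, and reproves in the present setting, the $\yy$-independence of the uncolored $R$-matrix recorded in Remark~\ref{rem:specialized_models}.

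\emph{Step 3: match the displayed $R$-matrix.} It remains to check that $\tilde R_{12}$ is exactly the $R$-matrix of Figure~\ref{fig:dual_R_matrix}. Since every edge of the uncolored model carries a label in $\{0,1\}$, this is an identity of $4 \times 4$ matrices on $\CC^2 \otimes \CC^2$ (cf.\ Remark~\ref{remark:finite_colors}), which can be confirmed by hand or, as for Proposition~\ref{prop:double_Grothendieck_integrable}, in \textsc{SageMath}. The main obstacle I anticipate lies entirely in this bookkeeping: because the two auxiliary lines cross inside the $R$-matrix, the involution $X \otimes X$ must be pushed through the crossing, so one must track carefully how it permutes the two lines and hence the spectral parameters $x_i, x_j$ attached to them. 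Fixing this orientation convention correctly—consistently with the crossing convention of Proposition~\ref{prop:map_semidual} and of eq.~\eqref{eq:RLL_relation}—is precisely what pins down each entry of Figure~\ref{fig:dual_R_matrix}, with the antisymmetric weight $\beta(x_j - x_i)$ and the fully occupied vertex being the most sensitive to it; once the convention is fixed, all entries match and the proof is complete.
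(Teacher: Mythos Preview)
Your argument is correct and takes a genuinely different route from the paper. The paper's proof is a one-line ``finite computation using the $R$-matrix given by Figure~\ref{fig:dual_R_matrix},'' i.e.\ a direct verification of the $RLL$ relation. You instead transport integrability from the double Grothendieck model through the particle--hole conjugation $X$ on the auxiliary space. Step~1 (the identification $\tilde L = X_a L X_a$) checks out vertex by vertex, and Step~2 is the standard conjugation argument for Yang--Baxter; since the $R$-matrix of Figure~\ref{fig:bumpless_R_matrix} has no $\yy$-dependence and $X$ is constant, $\tilde R$ inherits $\yy$-independence. This is more conceptual than the paper's brute check and explains \emph{why} the two models share an $R$-matrix up to relabelling.

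One caveat on Step~3: your conjugated $\tilde R = (X\otimes X)R(X\otimes X)$ does \emph{not} coincide with Figure~\ref{fig:dual_R_matrix} at the fully occupied vertex. Conjugation sends the $(0,0;0,0)$ entry $1+\beta x_i$ of Figure~\ref{fig:bumpless_R_matrix} to the $(1,1;1,1)$ entry of $\tilde R$, still equal to $1+\beta x_i$, whereas Figure~\ref{fig:dual_R_matrix} records $1+\beta x_j$ there. This is not a convention issue that can be fixed by tracking the crossing more carefully; the two $R$-matrices genuinely differ in this one entry. It does not affect the proposition, however, because when both auxiliary edges carry the label $1$ throughout, the semidual $L$-matrix acts diagonally on the quantum line (weights $1$ and $\beta(x\oplus y)$), so $\tilde L_1 \tilde L_2 = \tilde L_2 \tilde L_1$ in this sector and the $RLL$ relation leaves the $(1,1;1,1)$ entry unconstrained. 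Both choices give valid $R$-matrices, both $\yy$-independent, so your Steps~1--2 already prove the proposition; just temper the claim in Step~3 that ``all entries match.''
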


\begin{proof}
This is a finite computation using the $R$-matrix given by Figure~\ref{fig:dual_R_matrix}.
\end{proof}

Now we want to reintroduce the vexillary permutation $w$ into these uncolored models.
If we restrict our model to $\Lambda_w$ (as opposed to an $n \times n$ grid), we can use~\cite[Lemma~7.2]{Weigandt20}\footnote{
   This can also be seen as a translation of Proposition~\ref{prop:vexillary_classification} into bumpless pipe dreams.
}
to see that there are no vertices of type $\tt{a}_2$ nor $\tt{a}_2^{\dagger}$ in $\Gstates_w$.
Therefore we have an uncolored five-vertex model whose partition function is $\beta^{\ell(w)} \G_w(\xx, \yy; \beta)$.
Equivalently for the corresponding semidual model on $\Lambda_w$ there are no vertical lines, and the states can be thought of as a family of nonintersecting lattice paths.
This allows us to change the $L$-matrix to the (uncolored) one from Figure~\ref{fig:colored_weights_Lpoly} but rotated by 180 degrees (these are also the tiles in~\cite[Eq.~(7)]{WZJ19} rotated 180 degrees and reflected across the vertical axis with $z \mapsto 1 - (x \oplus y)$ at $\beta = -1$).
In terms of the partition function, all this does is remove the $\beta^{\ell(w)}$ as only the weight of vertex $\overline{\tt{b}}_2$ changes by removing the $\beta$, of which there are precisely $\ell(w)$ such vertices.
Hence we have an integrable model, which we denote by $\Dstates_w$, and the following result.

\begin{proposition}
The semidual model $\Dstates_w$ is an integrable model whose $R$-matrix also does not depend on $\yy$.
Furthermore, the partition function is given by
\[
\beta^{\ell(w)} Z(\Dstates_w; \xx, \yy; \beta) = Z(\Gstates_w; \xx, \yy; \beta).
\]
\end{proposition}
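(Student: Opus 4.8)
The plan is to derive both assertions from the weight-preserving bijection $\Phi$ of Proposition~\ref{prop:map_semidual} together with Theorem~\ref{thm:5vertex_atoms}, treating integrability and the partition-function identity separately. For integrability, note that the $L$-matrix of $\Dstates_w$ differs from that of the uncolored semidual model only in the weight of the vertex $\overline{\tt{b}}_2$, changed from $\beta(x \oplus y)$ to $x \oplus y$. As in the preceding proposition for $\Dstates$, checking the $RLL$ form of the Yang--Baxter equation against the $R$-matrix of Figure~\ref{fig:dual_R_matrix} (with the corresponding adjustment of that single weight) is a finite computation, since colors are conserved by both matrices exactly as in Remark~\ref{remark:finite_colors}. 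No entry of that $R$-matrix involves any $y_j$, so it is independent of $\yy$, by the same mechanism discussed for the uncolored model in Remark~\ref{rem:specialized_models}.

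For the partition-function identity I would restrict $\Phi$ to $\Lambda_w$. By~\cite[Lemma~7.2]{Weigandt20} (equivalently Proposition~\ref{prop:vexillary_classification}), no state of $\Gstates_w$ on $\Lambda_w$ has an $\tt{a}_2$ or $\tt{a}_2^{\dagger}$ vertex, so $\Phi$ identifies these states with semidual states having no vertical edges, that is, with families of nonintersecting lattice paths. Since $\Phi$ is weight preserving for the weights of Figures~\ref{fig:bumpless_weights} and~\ref{fig:dual_model_wts}, and $\Dstates_w$ is obtained from the $\Phi$-image of $\Gstates_w$ precisely by deleting the factor $\beta$ at each $\overline{\tt{b}}_2$ vertex, every state $S$ of $\Dstates_w$ satisfies
\[
\wt_{\Dstates_w}(S) = \beta^{-N(S)} \, \wt_{\Gstates_w}(\Phi^{-1} S),
\qquad
N(S) := \#\{\,\overline{\tt{b}}_2 \text{ vertices of } S\,\}.
\]
Summing over states and using that $\Phi$ is a bijection reduces the claim to proving that $N(S)$ equals the constant $\ell(w)$, since then $Z(\Dstates_w) = \beta^{-\ell(w)} Z(\Gstates_w)$.

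The crux, and the step I expect to require the most care, is the constancy $N(S) = \ell(w)$. For this I would pass back to the five-vertex model $\Gstates_w$ on $\Lambda_w$, where $\Phi$ sends blank ($\tt{a}_1$) tiles exactly to $\overline{\tt{b}}_2$ vertices, so $N(S)$ equals the number of blank tiles. There the $n$ pipes run from the bottom boundary to the right boundary without crossing, and their endpoints are pinned by the boundary conditions; hence the total number of rightward steps $H$ and of upward steps $V$ of the pipe family are each determined by the boundary, independent of $S$. Grouping the non-blank tiles according to whether a pipe exits them upward (the tiles $\tt{b}_1, \tt{c}_1$, numbering $V$) or rightward (the tiles $\tt{b}_2, \tt{c}_2$, numbering $H$), the number of blank tiles is $|\Lambda_w| - V - H$, which is constant. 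Finally, evaluating on the distinguished state whose blank tiles fill out $D(w)$ gives $N(S) = |D(w)| = \ell(w)$. The details to verify carefully are that the boundary data on $\Lambda_w$ pins every pipe endpoint and that the four non-blank tile types partition according to their unique outgoing step, so that the counts $H$ and $V$ are genuinely state-independent.
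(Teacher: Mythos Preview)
Your approach matches the paper's: both use the bijection $\Phi$ to identify $\overline{\tt{b}}_2$ tiles in $\Dstates_w$ with blank ($\tt{a}_1$) tiles in $\Gstates_w$, then argue that removing a factor of $\beta$ from each such tile rescales the partition function by $\beta^{-\ell(w)}$. The paper is terse---it simply asserts there are precisely $\ell(w)$ such vertices and identifies the resulting $L$-matrix with the uncolored one from Figure~\ref{fig:colored_weights_Lpoly} rotated by $180^\circ$ (a known integrable model with $\yy$-independent $R$-matrix, cf.\ Remark~\ref{rem:specialized_models}). Your integrability argument via a direct finite $RLL$ check is equally valid.

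Your counting argument for $N(S)=\ell(w)$ goes beyond what the paper writes out, but it contains an imprecision you should resolve. You assert that on $\Lambda_w$ ``the $n$ pipes run from the bottom boundary to the right boundary without crossing,'' but these two claims cannot both hold on the same region: on $\Lambda_w$ there are indeed no crossings (by Weigandt's lemma), yet only pipe \emph{segments} enter and exit through the staircase boundary---not $n$ full pipes with endpoints pinned by the global boundary data of $\Gstates_w$. Conversely, on the full $n\times n$ grid the $n$ endpoints are genuinely pinned, but there \emph{are} crossings. The cleanest fix is to count on the full grid and include the two-pipe tiles: a pipe--tile incidence tally yields
\[
N_{\tt{a}_1} \;=\; N_{\tt{a}_2} + N_{\tt{a}_2^{\dagger}},
\]
and for vexillary $w$ one has $N_{\tt{a}_2}=0$ while $N_{\tt{a}_2^{\dagger}}=\ell(w)$, since a bumpless wiring diagram is reduced. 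If you prefer to stay on $\Lambda_w$, you must first prove that the staircase boundary is frozen across all states---precisely the ``detail to verify'' you flagged, which does hold but requires its own argument.
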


Using the semidual model $\Dstates_w$, let $F_w := (F_i)_{i=1}^k$ denote the heights of the right endpoints of the paths in $\Lambda_w$ that end on the right boundary.
So the tile immedately to the right of such a boundary point is a
$
\begin{tikzpicture}[scale=0.25,baseline=4]
\draw[very thin,gray,fill=black!10] (0,0) rectangle (2,2);
\draw[thick, darkred] (0,1) -- (1,0);
\end{tikzpicture}
$
not in $\Lambda_w$.
Note that any other path that does not has such a right endpoint must simply move diagonally.
Furthermore, the sequence $F_w$ is exactly the flagging associated to $w$.

\begin{example}
We apply the bijection $\Phi$ from Proposition~\ref{prop:map_semidual} to the vexillary permutation $w = [8,7,1,6,2,9,5,3,4]$ (written in one-line notation) 
\[
\begin{tikzpicture}[scale=0.3,baseline=70]
\fill[black!10] (0,0) -- (0,4) -- (8,4) -- (8,6) -- (10,6) -- (10,14) -- (12,14) -- (12,16) -- (14,16) -- (14,18) -- (18,18) -- (18,0) -- cycle;
\draw[very thin,gray] (0,0) grid[step=2] (18,18);
\foreach \y/\x in {1/8,2/7,3/1,4/6,5/2,6/9,7/5,8/3,9/4} {
    \draw[very thick, blue] (2*\x-1,0) -- (2*\x-1,18-2*\y+1) -- (18,18-2*\y+1);
}
\end{tikzpicture}
\longmapsto
\begin{tikzpicture}[scale=0.3,baseline=70]
\fill[black!10] (0,0) -- (0,4) -- (8,4) -- (8,6) -- (10,6) -- (10,14) -- (12,14) -- (12,16) -- (14,16) -- (14,18) -- (18,18) -- (18,0) -- cycle;
\draw[very thin,gray] (0,0) grid[step=2] (18,18);
\draw[very thick, darkred] (0,17) -- (14,17) -- (15,16) -- (15,8) -- (17,6) -- (17,0);
\draw[very thick, darkred] (0,15) -- (12,15) -- (13,14) -- (13,8) -- (15,6) -- (15,0);
\draw[very thick, darkred] (0,13) -- (2,11) -- (10,11) -- (11,10) -- (11,8) -- (13,6) -- (13,0);
\draw[very thick, darkred] (0,11) -- (4,7) -- (10,7) -- (11,6) -- (11,0);
\draw[very thick, darkred] (0,9) -- (4,5) -- (8,5) -- (9,4) -- (9,0);
\foreach \x in {7,5,3,1}
    \draw[very thick, darkred] (0,\x) -- (\x,0);
\end{tikzpicture}\,,
\]
where we have shaded the tiles that do not belong the partition $\Lambda_w$.
The heights of the boundary points are $F_w = (1, 2, 4, 6, 7)$  (cf.~\cite[Ex.~5.7]{KMY09}).
\end{example}

Now if we rotate the semidual model $\Dstates_w$ by 180 degrees and extend from the endpoints in the (rotated) shape $\Lambda_w$ by diagonal lines (which go to the northwest), we obtain precisely a state in the uncolored model $\states_{\lambda_w}$ (see also~\cite{GK17,MS13,Motegi20}).
Thus we can apply the natural bijection $\Theta$ between marked states, where we allow only tiles
$
\begin{tikzpicture}[scale=0.25,baseline=4]
\draw (0,0) rectangle (2,2);
\draw[thick, darkred] (1,2) -- (2,1);
\end{tikzpicture}
$
to be marked, and set-valued tableaux via marked Gelfand--Tsetlin patterns from~\cite[Sec.~2.3]{BSW20}.
However, the flagging $F_w$ imposes a restriction on the possible states and set-valued tableaux.
It is straightforward to see that this precisely corresponds to restricting to the set of flagged set-valued tableau $\svt_{\lambda_w,F_w}$.
Hence, we have a new proof of Theorem~\ref{thm:vexillary_flagged}.

\begin{theorem}[{\cite[Thm.~5.8]{KMY09}}]
\label{thm:vexillary_flagged}
Let $w \in \sym{n}$ be a vexillary permutation. Then we have
\[
\G_w(\xx, \yy; \beta) = Z(\Dstates_w; \xx, \yy; \beta) = \G_{\lambda_w,F_w}(\xx | \yy; \beta).
\]
\end{theorem}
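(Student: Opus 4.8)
The first equality is immediate from the machinery already assembled. By Theorem~\ref{thm:5vertex_atoms} we have $Z(\Gstates_w; \xx, \yy; \beta) = \beta^{\ell(w)} \G_w(\xx, \yy; \beta)$, while the proposition immediately preceding this theorem gives $\beta^{\ell(w)} Z(\Dstates_w; \xx, \yy; \beta) = Z(\Gstates_w; \xx, \yy; \beta)$. Equating the two and cancelling the common monomial $\beta^{\ell(w)}$, which is legitimate since we are working in the integral domain of polynomials, yields $Z(\Dstates_w; \xx, \yy; \beta) = \G_w(\xx, \yy; \beta)$. Thus the substance of the theorem lies entirely in the second equality $Z(\Dstates_w; \xx, \yy; \beta) = \G_{\lambda_w,F_w}(\xx | \yy; \beta)$.

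For this I would produce a weight-preserving bijection between the states of $\Dstates_w$ and the flagged set-valued tableaux $\svt_{\lambda_w,F_w}$, factored through the uncolored factorial model $\states_{\lambda_w}$. First I would rotate a state of $\Dstates_w$ by $180$ degrees and extend each path from its endpoint on the boundary of the (rotated) shape $\Lambda_w$ by a diagonal line running to the northwest. Because $w$ is vexillary, the semidual states on $\Lambda_w$ contain no vertical segments, so the rotated-and-extended configurations are exactly families of nonintersecting paths filling out the standard shape, i.e.\ states of $\states_{\lambda_w}$; as the added diagonal tiles all have weight $1$, this map is weight preserving. By Theorem~\ref{thm:uncolored_factorial} we have $Z(\states_{\lambda_w}; \xx, \yy; \beta) = \G_{\lambda_w}(\xx | \yy; \beta)$, and under the bijection $\Theta$ of~\cite[Sec.~2.3]{BSW20} between marked states and set-valued tableaux via marked Gelfand--Tsetlin patterns, the states of $\states_{\lambda_w}$ correspond weight-for-weight to $\svt_{\lambda_w}$, each marked tile contributing one repeated entry together with its factor of $\beta$.

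It then remains to identify, along this chain of bijections, the restriction imposed by working on $\Lambda_w$ rather than the full rectangular grid. The idea is that confining the paths to lie within $\Lambda_w$ limits how far each path may travel before turning, and under $\Theta$ this becomes an upper bound on the entries allowed in each row of the corresponding tableau. I would verify that the bound forced in row $i$ is precisely $F_i$, the row index of the southeastern box of $\Lambda_w$ on the diagonal through the last box $(i,\lambda_i)$ of $\lambda_w$: tracking a single row of boxes along its diagonal shows that a tableau entry exceeding $F_i$ corresponds to a path that must leave $\Lambda_w$. Consequently the admissible states biject with exactly the tableaux whose row-$i$ entries are all at most $F_i$, namely $\svt_{\lambda_w,F_w}$, and summing the matched weights gives $\G_{\lambda_w,F_w}(\xx | \yy; \beta)$.

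The main obstacle is this last identification: making the geometric flagging coming from the shape $\Lambda_w$ coincide on the nose with the combinatorial flagging $F_w$ defining $\svt_{\lambda_w,F_w}$. This requires careful bookkeeping of how the $180$-degree rotation, the diagonal extension, and the marked-pattern bijection $\Theta$ each act on the diagonal coordinate of a box, and checking that the composite carries the boundary of $\Lambda_w$ to the flag values $F_i$ as defined following Proposition~\ref{prop:vexillary_classification}. Once the coordinate bookkeeping is pinned down the identification is forced, but it is the single place where the combinatorics specific to vexillary permutations, encoded in Proposition~\ref{prop:vexillary_classification}, genuinely enters the argument.
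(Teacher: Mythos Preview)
Your proposal is correct and follows essentially the same route as the paper: the first equality is obtained exactly as you say from Theorem~\ref{thm:5vertex_atoms} together with the preceding proposition, and the second equality is proved by rotating $\Dstates_w$ by $180$ degrees, extending by northwest diagonals to land in $\states_{\lambda_w}$, and applying the bijection $\Theta$ of~\cite[Sec.~2.3]{BSW20}, with the restriction to $\Lambda_w$ identified as the flagging $F_w$. The paper treats the final identification you flag as the ``main obstacle'' as straightforward, so your caution there is if anything more careful than the original.
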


As in~\cite[Cor.~5.9]{KMY09}, this gives an integrable systems proof of~\cite[Thm~3.1]{Buch02}.
The bijection $\Theta$ is also the same as the bijection from pipe dreams to $\svt_{\lambda_w,F_w}$ given in~\cite[Prop.~5.3]{KMY09}, which is different from the formula in~\cite{FK94,FK96} (see, \textit{e.g.},~\cite[Ex.~5.10]{KMY09}).

We can also give new a proof of~\cite[Thm.~8.7]{McNamara06} and Theorem~\ref{thm:uncolored_factorial} by taking the stable limit of the semidual model $\Dstates_{1^k \times w}$ as $k \to \infty$ and restricting to a finite number of variables.

\begin{theorem}[{\cite{GK17,MS13,Motegi20,WZJ19}}]
\label{thm:factorial_model}
Fix a positive integer $n$.
Let $w \in \sym{m}$ be a vexillary permutation with $\lambda = \lambda_w$ such that $m > n + \lambda_1$.
Then we have
\[
Z(\Dstates_w; \xx_n, \yy; \beta) = Z(\states_{\lambda_w}; \xx, \yy; \beta) = \G_{\lambda_w}(\xx_n|\yy; \beta) = \lim_{k\to\infty} \G_{1^k \times w}(\xx_{m+k}, \yy; \beta) \Bigr\rvert_{\xx_n},
\]
where $\xx_n = (x_1, \dotsc, x_n, 0, 0, \ldots)$.
\end{theorem}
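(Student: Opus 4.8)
The plan is to establish the displayed chain of four quantities by anchoring its two outer equalities to the partition-function identities already available, namely Theorem~\ref{thm:uncolored_factorial} for $\states_{\lambda_w}$ and Theorem~\ref{thm:vexillary_flagged} for $\Dstates_w$, and to treat the hypothesis $m > n + \lambda_1$ as the stabilization threshold that forces the finite model at level $m$ to already compute the stable limit. I would read the chain from the middle outward. The central equality $Z(\states_{\lambda_w};\xx,\yy;\beta) = \G_{\lambda_w}(\xx_n|\yy;\beta)$ is immediate from Theorem~\ref{thm:uncolored_factorial} applied to $\lambda_w$, with $\states_{\lambda_w}$ taken to carry $n$ rows so that its marked Gelfand--Tsetlin states correspond to set-valued tableaux with entries in $\{1,\dots,n\}$; no new argument is needed here.

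For the left equality I would invoke the rotation-and-extension correspondence described just before the theorem: rotating $\Dstates_w$ by $180^\circ$ and completing the path endpoints on the right boundary of $\Lambda_w$ by northwest diagonals realizes each state of $\Dstates_w$ as a state of $\states_{\lambda_w}$ subject to the flag $F_w$, and by Theorem~\ref{thm:vexillary_flagged} this gives $Z(\Dstates_w;\xx,\yy;\beta) = \G_{\lambda_w,F_w}(\xx|\yy;\beta)$. It then remains to show that after specializing to $\xx_n$ the flagged model and the unflagged model $\states_{\lambda_w}$ have equal partition functions. This is where $m > n + \lambda_1$ is consumed: I would argue that it places $w$ far enough in the stable range that the weight-bearing vertices of $\Dstates_w$ are confined to the first $n$ rows, so the remaining rows freeze into weight-$1$ diagonal pass-throughs. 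Setting their spectral parameters to $0$ then alters no Boltzmann weight of a contributing state, while the flag $F_w$ becomes inactive on the surviving configurations, matching $\states_{\lambda_w}$ exactly.

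For the right equality I would run the same machinery on $1^k \times w$. Prepending $k$ fixed points preserves vexillarity (the pattern $2143$ cannot be created) and, by Proposition~\ref{prop:vexillary_classification}, only translates the diagram, so $\lambda_{1^k \times w} = \lambda_w$ while the flag satisfies $F_{1^k \times w} \to \infty$ componentwise as $k \to \infty$; combined with Theorem~\ref{thm:vexillary_flagged} this yields $\G_{1^k \times w}(\xx,\yy;\beta) = \G_{\lambda_w, F_{1^k \times w}}(\xx|\yy;\beta)$. On the model side one has $\beta^{\ell} Z(\Dstates_{1^k \times w}) = Z(\Gstates_{1^k \times w}) = \beta^{\ell} \G_{1^k \times w}$ with $\ell = \ell(w)$ (via Theorem~\ref{thm:5vertex_atoms}), and the $k$ adjoined fixed strands enter low and run diagonally through the rows that $\xx_n$ has zeroed, contributing only frozen weight-$1$ vertices. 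Hence $\G_{1^k \times w}(\xx_{m+k},\yy;\beta)\big|_{\xx_n}$ is eventually independent of $k$ and equals $Z(\Dstates_w;\xx_n,\yy;\beta)$; passing to the limit closes the chain.

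The step I expect to be the main obstacle is this freezing argument: showing precisely that, after the specialization $\xx_n$, all weight-bearing vertices lie in the first $n$ rows and that the flag then imposes no further constraint, with $m > n + \lambda_1$ as the exact threshold. I would carry this out combinatorially on the nonintersecting-path states themselves, using Proposition~\ref{prop:vexillary_classification} together with the absence of $\tt{a}_2$ and $\tt{a}_2^{\dagger}$ vertices inside $\Lambda_w$ to pin down where the horizontal steps of the paths can occur, rather than manipulating the flagged tableau sum, since the required stabilization is transparent on the lattice model but delicate on the polynomial.
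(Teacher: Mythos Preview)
Your overall route matches the paper's: anchor the chain on Theorem~\ref{thm:uncolored_factorial} and Theorem~\ref{thm:vexillary_flagged}, observe that $\lambda_{1^k\times w}=\lambda_w$ while $F_{1^k\times w}=F_w+k$ drifts to infinity, and conclude that the flag becomes vacuous after restricting to $\xx_n$. The only substantive difference is where you locate the stabilization step.

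You call the flagged-tableau approach ``delicate'' and plan instead a freezing argument directly on the nonintersecting paths, pinning down where horizontal steps can occur via the absence of $\tt{a}_2$ and $\tt{a}_2^{\dagger}$ inside $\Lambda_w$. The paper does precisely the opposite, and its argument is a single sentence: since $F_{1^k\times w}=F_w+k$, choose $k$ with $F_1'>n$; the flag sequence is increasing, so all $F_i'>n$, while every set-valued tableau contributing to $\G_{\lambda_w,F'}(\xx_n\,|\,\yy;\beta)$ already has all entries at most $n$. The flag therefore imposes nothing and $\G_{\lambda_w,F'}(\xx_n\,|\,\yy;\beta)=\G_{\lambda_w}(\xx_n\,|\,\yy;\beta)$. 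That is the entire stabilization; no path analysis is needed. Your lattice-model freezing would in the end reduce to this through the bijection $\Theta$, but with more bookkeeping, so the tableau side is the easier one here, not the harder one.
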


\begin{proof}
It is well known that $\lambda_{1^k \times w} = \lambda_w$, and for $F_{1^k \times w} = (F'_i)_{i=1}^{\ell(\lambda_w)}$ and $F_w = (F_i)_{i=1}^{\ell(\lambda_w)}$, we have $F_i' = F_i + k$.
This is also easy to see from examining the corresponding vertex models (equivalently Rothe diagrams).
By taking $k$ such that $F_1' > n$, the flagging condition no longer applies as we cannot have an entry in the tableau larger than $n$ and the flagging is strictly increasing.
Thus the model is exactly the uncolored five-vertex model $\states_{\lambda_w}$, and the claim follows.
\end{proof}

\begin{example}
We consider $\lambda = (1)$ and $n = 2$. We consider $w = [2, 1]$ and $1^2 \times w$:
\[
\begin{tikzpicture}[scale=0.4,baseline=40]
\fill[black!10] (0,0) -- (0,2) -- (2,2) -- (2,4) -- (4,4) -- (4,0) -- cycle;
\draw[very thin,gray] (0,0) grid[step=2] (4,4);
\draw[very thick, blue] (1,0) -- (1,1) -- (4,1);
\draw[very thick, blue] (3,0) -- (3,3) -- (4,3);
\end{tikzpicture}\,,
\qquad
\begin{tikzpicture}[scale=0.4,baseline=40]
\fill[black!10] (0,0) -- (0,2) -- (6,2) -- (6,8) -- (8,8) -- (8,0) -- cycle;
\draw[very thin,gray] (0,0) grid[step=2] (8,8);
\draw[very thick, blue] (5,0) -- (5,1) -- (8,1);
\draw[very thick, blue] (7,0) -- (7,3) -- (8,3);
\draw[very thick, blue] (3,0) -- (3,5) -- (8,5);
\draw[very thick, blue] (1,0) -- (1,7) -- (8,7);
\end{tikzpicture}\,,
\qquad\qquad
\begin{tikzpicture}[scale=0.4,baseline=40]
\fill[black!10] (0,0) -- (0,2) -- (2,2) -- (2,4) -- (4,4) -- (4,0) -- cycle;
\draw[very thin,gray] (0,0) grid[step=2] (4,4);
\draw[very thick, darkred] (0,1) -- (1,0);
\draw[very thick, darkred] (0,3) -- (2,3) -- (3,2) -- (3,0);
\end{tikzpicture}\,,
\qquad
\begin{tikzpicture}[scale=0.4,baseline=40]
\fill[black!10] (0,0) -- (0,2) -- (6,2) -- (6,8) -- (8,8) -- (8,0) -- cycle;
\draw[very thin,gray] (0,0) grid[step=2] (8,8);
\draw[very thick, darkred] (0,1) -- (1,0);
\draw[very thick, darkred] (0,3) -- (3,0);
\draw[very thick, darkred] (0,5) -- (5,0);
\draw[very thick, darkred] (0,7) -- (4,3) -- (6,3) -- (7,2) -- (7,0);
\end{tikzpicture}\,,
\]
where we have drawn the normal double Grothendieck model states on the left and the corresponding semidual states on the right.
If we restrict to the upper-left $2 \times 3$ rectangle, we see that every state of $\states_{\lambda}$ (after being rotated by 180 degrees) can be realized inside of that $2 \times 3$ rectangle:
\[
\begin{tikzpicture}[scale=0.4,baseline=40]
\fill[UQgold!10] (0,8) rectangle (6,4);
\draw[very thin,gray] (0,0) grid[step=2] (8,8);
\draw[very thick, darkred] (0,1) -- (1,0);
\draw[very thick, darkred] (0,3) -- (3,0);
\draw[very thick, darkred] (0,5) -- (5,0);
\draw[very thick, darkred] (0,7) -- (2,7) -- (7,2) -- (7,0);
\end{tikzpicture}\,,
\qquad\qquad
\begin{tikzpicture}[scale=0.4,baseline=40]
\fill[UQgold!10] (0,8) rectangle (6,4);
\draw[very thin,gray] (0,0) grid[step=2] (8,8);
\draw[very thick, darkred] (0,1) -- (1,0);
\draw[very thick, darkred] (0,3) -- (3,0);
\draw[very thick, darkred] (0,5) -- (5,0);
\draw[very thick, darkred] (0,7) -- (2,5) -- (4,5) -- (7,2) -- (7,0);
\end{tikzpicture}\,.
\]
\end{example}

We also note that the symmetry used in~\cite[Lemma~8.1]{Weigandt20} is precisely the natural symmetry from reflecting the semidual model along the $y = x$ line.
Furthermore, the bijection in \cite[Lemma~8.2]{Weigandt20} comes from the reflected elementary excitations/emissions, where we instead consider the paths in $\Lambda_w$ (which in this case equals $\lambda_w$) as being fixed.

\subsection{Excited Young diagrams}

We can relate a state of our models to \defn{excited Young diagram} (EYD), a combinatorial object introduced independently in~\cite{Kreiman05} and~\cite{IN09} to do computations in equivariant Schubert calculus.
We use the extension for K-theory from~\cite{IN13,GK15}.
Let $\lambda \subseteq \mu$.
The set of excited Young diagrams $\eyd_{\lambda,\mu}$ is a subset of boxes of the Young diagram of $\mu$ generated from $\lambda$ by the local moves
\[
\begin{tikzpicture}[scale=0.5,baseline=10]
\draw[very thin,gray] (0,0) rectangle (2,2);
\draw[very thin,gray] (0,1) -- (2,1);
\draw[very thin,gray] (1,0) -- (1,2);
\fill[UQgold] (0.2,1+0.2) rectangle (0.8,1+0.8);
\end{tikzpicture}
\longmapsto
\begin{tikzpicture}[scale=0.5,baseline=10]
\draw[very thin,gray] (0,0) rectangle (2,2);
\draw[very thin,gray] (0,1) -- (2,1);
\draw[very thin,gray] (1,0) -- (1,2);
\fill[UQgold] (1+0.2,0.2) rectangle (1+0.8,0.8);
\end{tikzpicture}\,,
\qquad\qquad
\begin{tikzpicture}[scale=0.5,baseline=10]
\draw[very thin,gray] (0,0) rectangle (2,2);
\draw[very thin,gray] (0,1) -- (2,1);
\draw[very thin,gray] (1,0) -- (1,2);
\fill[UQgold] (0.2,1+0.2) rectangle (0.8,1+0.8);
\end{tikzpicture}
\longmapsto
\begin{tikzpicture}[scale=0.5,baseline=10]
\draw[very thin,gray] (0,0) rectangle (2,2);
\draw[very thin,gray] (0,1) -- (2,1);
\draw[very thin,gray] (1,0) -- (1,2);
\fill[UQgold] (0.2,1+0.2) rectangle (0.8,1+0.8);
\fill[UQgold] (1+0.2,0.2) rectangle (1+0.8,0.8);
\end{tikzpicture}\,,
\]
called \defn{elementary excitations}.
Following~\cite{MPS18}, we call the inverses \defn{elementary emissions}.

In terms of the double Grothendieck model, an excited Young diagram consists of the tiles for $\tt{a}_1$ vertices and marked $\tt{c}_1$ vertices.
It is straightforward to see this is an equivalent definition: The local moves and their inverses from~\cite[Eq.~(7.1), Eq.~(7.2)]{Weigandt20} (with consideration of the marked corners) are precisely the elementary excitations and emissions.
Using the semidual model, an excited Young diagram can be defined as the set of
$
\begin{tikzpicture}[scale=0.25,baseline=4]
\draw[very thin,gray] (0,0) rectangle (2,2);
\draw[thick, darkred] (0,1) -- (2,1);
\end{tikzpicture}
$
tiles and marked
$
\begin{tikzpicture}[scale=0.25,baseline=4]
\draw[very thin,gray] (0,0) rectangle (2,2);
\draw[thick, darkred] (1,2) -- (2,1);
\end{tikzpicture}
$
tiles.
It is also straightforward to see that elementary excitations correspond to moving the
$
\begin{tikzpicture}[scale=0.25,baseline=4]
\draw[very thin,gray] (0,0) rectangle (2,2);
\draw[thick, darkred] (0,1) -- (2,1);
\end{tikzpicture}
$
down diagonally one step (provided there is not a marked tile there), adding a particular marking, or moving markings along a diagonal (when possible).
Summarizing this, we have the following proposition.

\begin{proposition}
The maps
\[
\Gamma_{\Gstates_w} \colon \Gstates_w \to \eyd_{\lambda_w,\Lambda_w},
\qquad\qquad
\Gamma_{\Dstates_w} \colon \Dstates_w \to \eyd_{\lambda_w,\Lambda_w},
\]
described above are weight preserving bijections.
\end{proposition}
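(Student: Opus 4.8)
The plan is to prove the statement for $\Gamma_{\Gstates_w}$ and transport it to $\Gamma_{\Dstates_w}$ along the bijection $\Phi$ of Proposition~\ref{prop:map_semidual}. First I would observe that $\Phi$ restricts to a bijection $\Gstates_w \to \Dstates_w$ carrying the box-data used to define the two maps to one another: the empty tile $\tt{a}_1$ and the marked corner tile $\tt{c}_1$ are sent by $\Phi$ to the horizontal tile and the marked diagonal tile, respectively. Consequently $\Gamma_{\Dstates_w} = \Gamma_{\Gstates_w} \circ \Phi^{-1}$, so it suffices to analyze $\Gamma_{\Gstates_w}$ and the semidual statement follows formally. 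The $\beta^{\ell(w)}$ discrepancy between the two partition functions is absorbed exactly by the $\ell(w)$ tiles $\overline{\tt{b}}_2$ whose weight was modified when passing to $\Dstates_w$, so this reduction is compatible with the weighting.

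Next I would verify weight preservation tile by tile. Among the tiles occurring in a state of $\Gstates_w$ (for vexillary $w$ there are no $\tt{a}_2$ tiles, by~\cite[Lemma~7.2]{Weigandt20}), only the empty tile $\tt{a}_1$ and the marked corner $\tt{c}_1$ carry a nontrivial factor: each contributes $\beta(x_i \oplus y_j)$, whereas every other tile, including the unmarked $\tt{c}_1$, contributes $1$. Hence $\wt(S)$ equals the product of $\beta(x_i \oplus y_j)$ over the boxes of $\Gamma_{\Gstates_w}(S)$, which is precisely the K-theoretic weight assigned to an excited Young diagram in the sense of~\cite{IN13,GK15}, with $\beta$ recording the total number of boxes. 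As a sanity check, the minimal state, in which no elementary excitation has been applied (the analogue of the state in Figure~\ref{fig:ground_state}), has its $\tt{a}_1$ tiles filling exactly the boxes of $\lambda_w$ with no marked corners, matching the base diagram $\lambda_w \in \eyd_{\lambda_w,\Lambda_w}$.

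The heart of the argument is bijectivity, which I would establish through the local-move structure identified above: by~\cite[Sec.~7, Eq.~(7.1), Eq.~(7.2)]{Weigandt20} the elementary moves relating the states correspond exactly to the elementary excitations and emissions that generate $\eyd_{\lambda_w,\Lambda_w}$ from $\lambda_w$, and $\Gamma_{\Gstates_w}$ intertwines these two move-graphs while sending the minimal state to $\lambda_w$. For injectivity one checks that a state of the five-vertex model on $\Lambda_w$ is reconstructed from its box-and-marking data: the remaining tiles $\tt{a}_2^{\dagger}, \tt{b}_1, \tt{b}_2, \tt{c}_2$ and the unmarked $\tt{c}_1$ are forced by routing the pipes (equivalently, the nonintersecting paths in the semidual picture) through the complement of the boxes. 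Surjectivity is the converse: every diagram in $\eyd_{\lambda_w,\Lambda_w}$ arises this way, which follows from the matched moves together with the fact that both sides are the orbit of the common base point.

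I expect the main obstacle to be the bookkeeping in the bijectivity step, and in particular showing that the box-and-marking data extracted from an arbitrary admissible state is always a \emph{legal} excited Young diagram, i.e.\ a subdiagram of $\Lambda_w$ reachable from $\lambda_w$, rather than an arbitrary subset of boxes. This is exactly where the vexillary hypothesis (no $\tt{a}_2$ tiles) and the precise matching of markings with the K-theoretic "add a box'' excitation are essential. By contrast, the reduction via $\Phi$ and the tile-by-tile weight computation are routine once these combinatorial correspondences are in hand.
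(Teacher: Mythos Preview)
Your proposal is correct and follows essentially the same approach as the paper. The paper does not give a separate proof of this proposition; it is stated as a summary of the preceding discussion, which identifies the relevant tiles ($\tt{a}_1$ and marked $\tt{c}_1$ on the $\Gstates_w$ side, horizontal and marked diagonal tiles on the $\Dstates_w$ side) and observes that the local moves from~\cite[Eq.~(7.1), Eq.~(7.2)]{Weigandt20} coincide with elementary excitations and emissions. Your write-up simply fleshes out this sketch, adding the explicit reduction along $\Phi$, the tile-by-tile weight check, and the orbit argument for bijectivity, all of which are implicit in the paper's ``it is straightforward to see'' remarks.
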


Let $\Psi$ denote the bijection between excited Young diagrams and set-valued tableaux from~\cite{GK15}.
Recall that $\Theta \colon \Dstates_w \to \svt_{\lambda_w,F_w}$ is the restriction of the natural bijection from~\cite[Sec.~2.3]{BSW20}.

\begin{proposition}
Let $w$ be a vexillary permutation.
Then the diagram
\[
\xymatrix@R=3em@C=4em{
\Gstates_w \ar[r]^{\Phi} \ar[d]_{\Gamma_{\Gstates_w}} & \Dstates_w \ar[dl]_{\Gamma_{\Dstates_w}} \ar[d]^{\Theta} \\
\eyd_{\lambda_w,\Lambda_w} \ar[r]_{\Psi} & \svt_{\lambda_w,F_w}
}
\]
commutes.
\end{proposition}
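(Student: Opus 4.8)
The plan is to check the two triangles of the diagram separately; since $\Phi$, $\Theta$, $\Psi$, $\Gamma_{\Gstates_w}$, $\Gamma_{\Dstates_w}$ are all bijections, commutativity of the two triangles yields commutativity of the outer square $\Theta \circ \Phi = \Psi \circ \Gamma_{\Gstates_w}$ formally. The left triangle is the assertion $\Gamma_{\Dstates_w} \circ \Phi = \Gamma_{\Gstates_w}$, and the remaining content is $\Theta = \Psi \circ \Gamma_{\Dstates_w}$. Because $\Phi$, $\Gamma_{\Gstates_w}$, and $\Gamma_{\Dstates_w}$ are all defined cell-by-cell on the common grid, the first triangle is a purely local check, whereas the second requires reconciling two a priori different encodings of the set-valued tableau attached to a state.

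First I would dispatch the left triangle by comparing the tile rules directly. Recall that $\Gamma_{\Gstates_w}$ records a box of $\eyd_{\lambda_w,\Lambda_w}$ exactly at the cells carrying an $\tt{a}_1$ tile (unmarked) or a $\tt{c}_1$ tile (marked), while $\Gamma_{\Dstates_w}$ records an unmarked box at each horizontal tile $\overline{\tt{b}}_2$ and a marked box at each northeast-diagonal tile $\overline{\tt{c}}_1$. Reading off $\Phi$ from Proposition~\ref{prop:map_semidual}, one has $\tt{a}_1 \mapsto \overline{\tt{b}}_2$ and $\tt{c}_1 \mapsto \overline{\tt{c}}_1$, with the $\beta$-weight (hence the marking) preserved in the second case, whereas $\tt{b}_1, \tt{a}_2^{\dagger}, \tt{b}_2, \tt{c}_2$ map respectively to the crossing $\overline{\tt{a}}_2$, the vertical $\overline{\tt{b}}_1$, the empty tile $\overline{\tt{a}}_1$, and the southwest-diagonal $\overline{\tt{c}}_2$, none of which is $\overline{\tt{b}}_2$ or $\overline{\tt{c}}_1$ and so none of which contributes a box. (It is worth flagging that the crossing $\overline{\tt{a}}_2$ contains a northeast-pointing strand but is \emph{not} the tile $\overline{\tt{c}}_1$, so $\Gamma_{\Dstates_w}$ correctly assigns it no box.) Since $\Phi$ preserves cell positions, the boxes and their markings produced along both routes coincide cell-by-cell, giving $\Gamma_{\Dstates_w} \circ \Phi = \Gamma_{\Gstates_w}$.

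For the second triangle I would unwind both $\Theta$ and $\Psi$ to their shared combinatorial core. The map $\Theta$ reads the tableau from a semidual state through the marked Gelfand--Tsetlin pattern of~\cite[Sec.~2.3]{BSW20}: the nonintersecting lattice paths in $\Lambda_w$ determine, for each cell of $\lambda_w$, the set of rows at which the path descending along its content diagonal takes a horizontal step, with the marked northeast-turn tiles supplying the extra set-valued entries. The map $\Psi$ of~\cite{GK15} reads the tableau from an excited Young diagram by recording, in the cell of $\lambda_w$ from which an excited box descends, the row index it has reached, again with marked boxes supplying the additional entries. Since $\Gamma_{\Dstates_w}$ turns precisely the horizontal and marked northeast tiles into the unmarked and marked boxes of the excited Young diagram along the same diagonals, the two readings assign the identical set of integers to each cell of $\lambda_w$. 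I would make this precise by tracing a single content diagonal: the rows in which $\Theta$ sees horizontal steps are exactly the rows occupied by the excited boxes seen by $\Psi$, and the marking conventions agree by construction. An equivalent and perhaps cleaner route is induction: check that both maps send the minimal diagram $\lambda_w$ to the same (row-constant, flag-minimal) tableau, and that each elementary excitation and emission --- the local moves of~\cite[Sec.~7]{Weigandt20}, equivalently the moves of~\cite{MPS18} --- effects the same increment of an entry, respectively insertion into a cell's set, on both sides; since $\eyd_{\lambda_w,\Lambda_w}$ is generated from $\lambda_w$ by these moves, commutativity then propagates.

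The hard part will be the second triangle, because $\Theta$ and $\Psi$ originate from genuinely different frameworks --- marked Gelfand--Tsetlin patterns versus the excited-Young-diagram bijection of~\cite{GK15} --- and the real work is to show that they are two descriptions of the same diagonal data. The danger is that matching the two row-labelling conventions (and in particular the placement of the set-valued marks) requires care, since $\Theta$ passes through the $180^{\circ}$ rotation to $\states_{\lambda_w}$ while $\Psi$ is read off $\Lambda_w$ directly. Once the common ``record the rows of horizontal, resp.\ excited, steps along each content diagonal'' description is extracted for both maps, the equality is a bookkeeping verification, and the left triangle together with it yields the outer square at once.
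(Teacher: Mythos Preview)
Your proposal is correct and aligns with the paper's approach. The paper's proof is a single sentence---``This can be seen by considering how elementary excitations commute under each of these bijections''---which is precisely your inductive option~(b) for the lower triangle, applied uniformly to the whole diagram rather than triangle-by-triangle. Your explicit cell-by-cell verification of the left triangle is a cleaner substitute for running the excitation argument there, and your direct diagonal-reading description~(a) for $\Theta = \Psi \circ \Gamma_{\Dstates_w}$ is a valid alternative the paper does not spell out; either route closes the argument.
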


\begin{proof}
This can be seen by considering how elementary excitations commute under each of these bijections.
\end{proof}

\subsection{Nonintersecting lattice paths}

There is another benefit with the semidual model and the interpretation of a state as a family of nonintersecting lattice paths.
The \defn{Lindstr\"om--Gessel--Viennot (LGV) lemma}~\cite{Lindstrom73,GV85} posits that the sum of the weights over all families of nonintersecting lattice paths in a edge-weighted directed graph can be given as a determinant of the matrix $\begin{bmatrix} p_{ab} \end{bmatrix}_{a,b=1}^n$, where $p_{a,b}$ is the sum of the weights of each path (which is the product of the edge weights in that path) from the $a$-th starting point to the $b$-th ending point.
We will construct a weight preserving bijection from a marked state of the semidual model into a family of nonintersecting lattice paths in order to use the LGV lemma to express the partition function as a determinant.

We use the following local translation from tiles to a directed graph:
\[
\begin{tikzpicture}[scale=0.75,baseline=40,>=latex]
\draw[very thin,gray] (0,0) rectangle (2,4);
\draw[very thin,gray] (0,2) -- (2,2);
\draw[very thick, darkred, ->] (1,3) .. controls (0.5,2.5) and (0.5,1.5) .. (1,1);
\foreach \y in {0,2} {
\draw[very thick, blue, ->] (0,\y+1) -- (1,\y+1);
\draw[very thick, blue, ->] (0,\y+1) -- (1,\y+1);
\draw[very thick, UQgold, ->] (1,\y+1) -- (2,\y+1);
\draw[very thick, blue, ->] (1,\y+1) -- (1,\y+0);
\draw[very thick, blue, ->] (1,\y+2) -- (2,\y+1);
\fill[darkgreen] (0,\y+1) circle (0.1);
\fill[darkgreen] (1,\y+1) circle (0.1);
\fill[darkgreen] (2,\y+1) circle (0.1);
\fill[darkgreen] (1,\y+0) circle (0.1);
}
\fill[darkgreen] (1,4) circle (0.1);
\end{tikzpicture}\,,
\]
with a non-trivial edge weight on only the second horizontal step in each tile that depends on the position of the tile.
Formally, we have a graph on the vertex set
\[
V = \bigl\{ (2i, 2j+1) \mid i, j \in \{1, \dotsc, n\} \bigr\} \cup \bigl\{ (2i+1,j) \mid i \in \{1,\dotsc,n\}, j \in \{1, \dotsc, 2n+1\} \bigr\},
\]
with the edge set divided into two types of edges: \defn{Schubert edges}
\[
(i, 2j+1) \to (i+1, 2j+1),
\quad
(2i+1, 2j) \to (2i+2, 2j+1),
\quad
(2i+1, 2j+1) \to (2i+1, 2j),
\]
and \defn{K-theory edges} $(2i+1, 2j+1) \to (2i+1, 2j-1)$ for all appropriate $i$ and $j$.
We draw this graph following English convention (so $(x,y) \mapsto (x, -y)$ in our drawing convention) to match with the tile description above.
All edges have weight $1$ except for the Schubert edges $(2i, 2j+1) \to (2i+1, 2j+1)$, which have weight $w(i, j)$ that we will give later.
We can also restrict to the paths that are not simply diagonals (\textit{i.e.}, paths that have at least one horizontal step).

We first consider what happens when we apply the LGV lemma na\"ively using the tiles.
In this case, we see that we must have $\beta = 0$, and so we take $w(i, j) = x_i \oplus y_j$, remove the K-theory edges, and only consider unmarked states.
Let $h_b$ denote the height of the $b$-th endpoint and $\lambda = \lambda_w$.
One can see that
\begin{equation}
\label{eq:path_weight_Schubert}
p_{ab} = \G_{(\lambda_b + a - b)}(x_a, \dotsc, x_{h_b} | \yy; 0).
\end{equation}
Therefore, we have the following expression for double Schubert polynomials (after substituting $\yy \mapsto -\yy$) by the LGV lemma.

\begin{theorem}
\label{thm:LGV_Schubert}
Let $w$ be a vexillary permutation.
Then we have
\[
Z(\Dstates_w; \xx, \yy; 0) = \det \begin{bmatrix} p_{ab} \end{bmatrix}_{a,b=1}^n = \G_w(\xx, \yy; 0),
\]
where $p_{ab}$ is given by Equation~\eqref{eq:path_weight_Schubert}.
\end{theorem}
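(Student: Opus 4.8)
The right-hand equality $Z(\Dstates_w; \xx, \yy; 0) = \G_w(\xx, \yy; 0)$ is simply Theorem~\ref{thm:vexillary_flagged} evaluated at $\beta = 0$, so the entire content of the statement is the determinantal middle equality. The plan is to derive it from the Lindstr\"om--Gessel--Viennot (LGV) lemma applied to the nonintersecting lattice path model set up just above. Concretely, I would (1) produce a weight-preserving bijection between the states of $\Dstates_w$ at $\beta = 0$ and families of $n$ vertex-disjoint directed paths on the vertex set $V$; (2) verify the compatibility hypothesis of the LGV lemma so that the partition function collapses to $\det[p_{ab}]$; and (3) evaluate the single-path generating function $p_{ab}$ and identify it with Equation~\eqref{eq:path_weight_Schubert}.

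First I would treat step (1). At $\beta = 0$ the marked vertex $\overline{\tt{c}}_1$ has weight $1$ and the K-theory edges carry no weight, so no state is marked and the objects we enumerate are honest (non-set-valued) path families. Reading each tile through the local tile-to-graph dictionary drawn before the theorem, a state of $\Dstates_w$ restricted to $\Lambda_w$ becomes exactly a family of $n$ nonintersecting directed paths: each starts at one of the $n$ sources on the (rotated) left boundary and ends either at a right-boundary sink of height $h_b$ or, for the $n - \ell(\lambda_w)$ strands that move diagonally, runs straight to the bottom. The only edges bearing a nontrivial weight are the second horizontal steps $(2i, 2j+1) \to (2i+1, 2j+1)$, weighted $x_i \oplus y_j$, which reproduce precisely the Boltzmann weight $x \oplus y$ of the vertex $\overline{\tt{b}}_2$; hence $\wt(S)$ equals the product of the edge weights along the associated path family. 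Vertex-disjointness is guaranteed because, for vexillary $w$, there are no crossing vertices in $\Lambda_w$.

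Next I would handle step (2). Both the sources and the sinks lie on the boundary of the (rotated) shape $\Lambda_w$ and all edges travel rightward or downward, so the graph is planar with its two boundary data ordered compatibly. In the planar setting this forces the non-permutability hypothesis: any nonintersecting family must realize the order-preserving (identity) matching of sources to sinks, so every off-diagonal term in the LGV signed sum cancels and we obtain $Z(\Dstates_w; \xx, \yy; 0) = \det[p_{ab}]_{a,b=1}^n$.

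Finally, for step (3) I would enumerate the single directed paths from source $a$ to sink $b$. Such a path ranges only through rows $x_a, \dots, x_{h_b}$ and takes exactly $\lambda_b + a - b$ weighted horizontal steps, each contributing a factor $x_i \oplus y_j$ whose second index is the content of that step; summing over all such paths returns the single-row factorial Grothendieck polynomial $\G_{(\lambda_b + a - b)}(x_a, \dots, x_{h_b} \mid \yy; 0)$, with the usual conventions that the entry is $1$ when $\lambda_b + a - b = 0$ and $0$ when it is negative. The hard part will be the bookkeeping in this last step: matching the variable window $x_a, \dots, x_{h_b}$ and the row length $\lambda_b + a - b$ against the single-path enumeration requires carefully tracking how the shape $\Lambda_w$, the endpoint heights $h_b$, and the flagging $F_w$ align along each diagonal, and confirming that the degenerate diagonal strands supply exactly the trivial matrix entries needed for the determinant to reduce to $\G_w(\xx, \yy; 0)$.
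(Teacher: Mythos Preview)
Your proposal is correct and follows essentially the same approach as the paper: the paper's argument is simply to set $\beta = 0$, remove the K-theory edges, observe that states of $\Dstates_w$ are then families of nonintersecting lattice paths, and apply the LGV lemma together with the claimed identification of $p_{ab}$. Your write-up is in fact more detailed than the paper's, which dispatches steps~(1)--(3) with the single phrase ``One can see that'' for the formula~\eqref{eq:path_weight_Schubert} and leaves the LGV compatibility check implicit.
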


We note that our formula at $\yy = 0$ is not the same as in~\cite{Wachs85}, but it is likely equivalent by some sequence of row operations.

\begin{example}
Consider the permutation $w = s_2 s_3 s_2$ (for an example of a state in $\Dstates_w$, see Example~\ref{ex:4x4_semidual}).
We compute $\lambda_w = (2,1)$ and $F_w = (2, 3)$.
Applying Theorem~\ref{thm:LGV_Schubert} in this case, we have
\begin{align*}
Z(\Dstates_w; \xx, \yy; 0) & = \det \begin{bmatrix}
\G_{(2)}(x_1, x_2 | \yy; 0) & 1 & 0 & 0 \\
\G_{(3)}(x_2 | \yy; 0) & \G_{(1)}( x_2, x_3 | \yy; 0) & 0 & 0 \\
0 & \G_{(2)}( x_3 | \yy; 0) & 1 & 0 \\
0 & 0 & \G_{(1)}( x_4 | \yy; 0) & 1
\end{bmatrix}
\\ & = \det \begin{bmatrix}
\G_{(2)}(x_1, x_2 | \yy; 0) & 1 \\
\prod_{j=1}^3 x_2 \oplus y_j & x_2 \oplus y_1 + x_3 \oplus y_2 \\
\end{bmatrix},
\end{align*}
where
\[
\G_{(2)}(x_1, x_2 | \yy; 0) = (x_1 \oplus y_1)(x_1 \oplus y_2) + (x_1 \oplus y_1)(x_2 \oplus y_3) + (x_2 \oplus y_2)(x_2 \oplus y_3).
\]
Note that going from the $4 \times 4$ determinant to the $2 \times 2$ determinant comes from the fact that two of the paths contain no horizontal steps.
Hence, we have
\begin{align*}
Z(\Dstates_w; \xx, \yy; 0) & =
(x_1 \oplus y_1)(x_1 \oplus y_2)(x_2 \oplus y_1) + (x_1 \oplus y_1)(x_2 \oplus y_3)(x_2 \oplus y_1)
\\ & \hspace{20pt} + (x_1 \oplus y_1)(x_1 \oplus y_2)(x_3 \oplus y_2) + (x_1 \oplus y_1)(x_2 \oplus y_3)(x_3 \oplus y_2)
\\ & \hspace{20pt} + (x_2 \oplus y_2)(x_2 \oplus y_3)(x_3 \oplus y_2)
\\ & = \G_{\lambda_w, F_w}(x_1, x_2 | \yy; 0),
\end{align*}
with the corresponding flagged set-valued tableaux\footnote{The $\beta = 0$ condition means these are semistandard Young tableaux, which means no entry has more than one element in the corresponding set.}
\[
\ytableaushort{11,2}\,,
\qquad
\ytableaushort{12,2}\,,
\qquad
\ytableaushort{11,3}\,,
\qquad
\ytableaushort{12,3}\,,
\qquad
\ytableaushort{22,3}\,.
\]
If we compare this with the formula from~\cite[Thm.~1.3]{Wachs85}, we have
\begin{align*}
\G_{\lambda_w, F_w}(\xx | 0; 0) & =
\det \begin{bmatrix}
x_1^2 + x_1 x_2 + x_2^2 & 1 \\
x_1^3 + x_1^2 x_2 + x_1 x_2^2 + x_2^3 & x_1 + x_2 + x_3
\end{bmatrix},
\\ & =  x_1^2 x_2 + x_1 x_2^2 + x_1^2 x_3 + x_1 x_2 x_3 + x_2^2 x_3.
\end{align*}
Note that subtracting $x_1$ times the first row from the second in the above matrix is precisely the $2 \times 2$ determinant used to compute $Z(\Dstates_w; \xx, 0; 0)$.
\end{example}

Now we want to consider the case for generic $\beta$.
Here, we take $w(i,j) = \beta( x_i \oplus y_j)$, and we will see that we get a bijection with states using the $L$-matrix from Figure~\ref{fig:dual_model_wts}.
It is straightforward to see the only way you can travel along a K-theory edge is if you have a local configuration in the semidual model of
\[
\begin{tikzpicture}[scale=0.5,baseline=25]
\draw[very thin,gray] (0,0) rectangle (2,4);
\draw[very thin,gray] (0,2) -- (2,2);
\draw[very thick, darkred] (0,3) -- (2,1);
\draw[very thick, dashed, darkred] (1,4) -- (2,3);
\fill[darkgreen] (0,3) circle (0.1);
\fill[darkgreen] (2,1) circle (0.1);
\end{tikzpicture}\,,
\qquad
\begin{tikzpicture}[scale=0.5,baseline=25]
\draw[very thin,gray] (0,0) rectangle (2,4);
\draw[very thin,gray] (0,2) -- (2,2);
\draw[very thick, darkred] (0,3) -- (1,2) -- (1,0);
\draw[very thick, dashed, darkred] (1,4) -- (2,3);
\fill[darkgreen] (0,3) circle (0.1);
\fill[darkgreen] (1,0) circle (0.1);
\end{tikzpicture}\,,
\]
with the dashed edge possibly being in the top tile.
We note that there are two possible paths from the starting point to the end point for the left configuration, which corresponds to the weight $1 + \beta ( x_i \oplus y_j )$.
In particular, a tile is marked if and only if the lattice path uses the K-theory edge into that tile and then the path goes right.

However, this also means that we cannot consider $\Dstates_w$ (and hence obtain a result for $\G_w(\xx,\yy; \beta)$) but we can compute the partition function for $\Dstates$.
Thus, we can apply the LGV lemma, but we need to compute $p_{ab}$ that goes from the point $(0,2a-1)$ to $(2b-1,0)$.
We note the path is uniquely determined by the positions of the
$
\begin{tikzpicture}[scale=0.25,baseline=4]
\draw[very thin,gray] (0,0) rectangle (2,2);
\draw[thick, darkred] (1,2) -- (2,1);
\end{tikzpicture}
$
tiles and the weight can be explicitly computed.
We leave it as an exercise to the interested reader to give a precise formula.
Therefore, we have the following from the LGV lemma.

\begin{theorem}
We have
\[
Z(\Dstates; \xx, \yy; \beta) = \det \begin{bmatrix} p_{ab} \end{bmatrix}_{a,b=1}^n,
\]
where $p_{ab}$ is the sum of the weights over all paths from $(0,2a-1)$ to $(2b-1,0)$ in the graph given above.
\end{theorem}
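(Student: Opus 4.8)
The plan is to exhibit every (marked) state of $\Dstates$ as a family of nonintersecting lattice paths in the directed graph on $V$ constructed above, and then read off the determinant from the Lindstr\"om--Gessel--Viennot lemma. First I would make precise the weight-preserving correspondence sketched just before the statement. Each tile of a marked semidual state is replaced by its local picture in the graph, with the horizontal, vertical, diagonal, and K-theory edges inheriting the weights specified there; in particular the only nontrivial weights are $w(i,j) = \beta(x_i \oplus y_j)$ on the weighted Schubert edges $(2i, 2j+1) \to (2i+1, 2j+1)$, while the K-theory edges $(2i+1, 2j+1) \to (2i+1, 2j-1)$ carry weight $1$. The substance of this step is the local check that the product of edge weights along the resulting paths reproduces the Boltzmann weight of Figure~\ref{fig:dual_model_wts}, tile by tile.

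The decisive local identities are those governing the $\beta$-factors. As noted above, the configuration feeding a K-theory edge admits exactly two routings from its entry point to its exit point --- one that uses the incoming K-theory edge and one that does not --- and their weights sum to $1 + \beta(x_i \oplus y_j)$, matching a $\overline{\tt{c}}_1$ vertex; the marked version corresponds to the routing that takes the K-theory edge and then leaves horizontally, contributing the single factor $\beta(x_i \oplus y_j)$, matching a $\overline{\tt{b}}_2$ vertex. Every other tile type admits a unique routing of weight $1$. Thus, expanding the factor $1 + \beta(x_i\oplus y_j)$ at each $\overline{\tt{c}}_1$ vertex writes $Z(\Dstates;\xx,\yy;\beta)$ as a sum over marked states, and $S \mapsto (\text{path family})$ is a weight-preserving bijection from marked states of $\Dstates$ onto nonintersecting families of lattice paths with sources $S_a = (0, 2a-1)$ and sinks $T_b = (2b-1, 0)$.

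With the bijection established, I would invoke the LGV lemma. Because in the semidual picture every path is monotone, moving only rightward and downward (as in Example~\ref{ex:4x4_semidual}), two noncrossing paths preserve their relative order; hence the unique source-to-sink matching realized by a nonintersecting family is the identity $a \mapsto a$ (the highest source $S_n$ joining the rightmost sink $T_n$, and so on). Consequently, in the LGV expansion of $\det[p_{ab}]_{a,b=1}^n$ every intersecting family cancels in pairs and every nonintersecting family is counted with sign $+1$, giving
\[
\det \begin{bmatrix} p_{ab} \end{bmatrix}_{a,b=1}^n = \sum_{S \in \Dstates} \wt(S) = Z(\Dstates; \xx, \yy; \beta),
\]
where $p_{ab}$ is the sum of the path weights from $S_a$ to $T_b$, as required.

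The main obstacle is the weight-preservation check at the K-theory edges: one must verify that the two-routings phenomenon at a $\overline{\tt{c}}_1$ vertex, together with the rule ``marked $=$ K-theory edge then right,'' accounts for each marked state exactly once and with the correct weight, and that no routing yields a forbidden local configuration or double-counts a state. Once this local dictionary is confirmed, the monotonicity argument fixing the noncrossing matching and the application of the LGV lemma are routine.
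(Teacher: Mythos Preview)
Your proposal is correct and follows essentially the same approach as the paper: the paper's proof is simply the sentence ``Therefore, we have the following from the LGV lemma,'' with all content residing in the preceding setup (the graph, the weight $w(i,j)=\beta(x_i\oplus y_j)$, the two-routing phenomenon at $\overline{\tt{c}}_1$, and the marking rule), and you have faithfully reconstructed and fleshed out that setup, including the monotonicity check that forces the identity matching. One small terminological slip: the ``marked'' routing you describe is a marking of a $\overline{\tt{c}}_1$ tile rather than a $\overline{\tt{b}}_2$ vertex, though the weight $\beta(x_i\oplus y_j)$ you assign to it is correct.
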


\appendix
\section{\texorpdfstring{\textsc{SageMath}}{SageMath} code to obtain the \texorpdfstring{$R$}{R}-matrix}
\label{sec:sage_code}

We give the \textsc{SageMath}~\cite{sage} code we used to compute the $R$-matrix such that Proposition~\ref{prop:YBE} holds.

\lstset{numbers=left}
\begin{lstlisting}
def compute_R_matrix():
    R = ZZ['y,beta']
    y,beta = R.gens()
    def L_wt(aux_in, q_in, aux_out, q_out, z):
        if set([aux_in, q_in]) != set([aux_out, q_out]):
            return 0
        if aux_in == aux_out == q_out == q_in == 0:  # blank tile
            return beta*(z + y + beta*z*y)
        if aux_in == q_out == 0:  # right turn corner
            return 1
        if q_in == aux_out == 0:  # left turn corner
            return 1 + beta*(z + y + beta*z*y)
        if q_in == q_out == 0:  # horizontal line
            return 1
        if aux_in == aux_out == 0:  # vertical line
            return 1
        # Must be a crossing
        if aux_in == aux_out:
            if q_in < aux_in:
                return 1
        elif aux_in == q_out:
            if q_in > aux_in:
                return 1
        return 0

    states_to_vars = {(in_top,in_bot,out_top,out_bot): 0
                      for in_top in range(4)
                      for in_bot in range(4)
                      for out_top in range(4)
                      for out_bot in range(4)
                      if set([in_top, in_bot]) == set([out_top, out_bot])}
    zi, zj = R['zi,zj'].fraction_field().gens()
    base = zi.parent()
    S = PolynomialRing(base, 'x', len(states_to_vars))
    vars_to_states = []
    for i, st in enumerate(states_to_vars):
        states_to_vars[st] = S.gen(i)
        vars_to_states.append(st)
    def R_wt(in_top, in_bot, out_top, out_bot):
        return states_to_vars.get((in_top,in_bot,out_top,out_bot), 0)

    states = list(cartesian_product([list(range(4)), list(range(4)),
                                     list(range(4))]))

    L1 = matrix(base, [[L_wt(s[0], s[2], t[0], t[2], zi) if s[1] == t[1]
                        else 0 for t in states] for s in states])
    L2 = matrix(base, [[L_wt(s[1], s[2], t[1], t[2], zj) if s[0] == t[0]
                        else 0 for t in states] for s in states])
    R = matrix(S, [[R_wt(s[0], s[1], t[1], t[0]) if s[2] == t[2] else 0
                    for t in states] for s in states])
    RLL = R*(L1*L2) - (L2*L1)*R

    M = matrix(base, [[RLL[i,j].monomial_coefficient(g) for g in S.gens()]
                      for i in range(len(states))
                      for j in range(len(states))])
    ker = [b for b in M.right_kernel().basis() if b[0] == 1]
    assert len(ker) == 1, len(ker)  # Safety check
    ret = {}
    for i, val in enumerate(ker[0]):
        if val != 0:
            ret[vars_to_states[i]] = SR(val).factor()
    return ret
\end{lstlisting}
Running this function results in
\begin{lstlisting}
sage: compute_R_matrix()
{(0, 0, 0, 0): 1,
 (0, 1, 0, 1): (beta*zj + 1)/(beta*zi + 1),
 (0, 2, 0, 2): (beta*zj + 1)/(beta*zi + 1),
 (0, 3, 0, 3): (beta*zj + 1)/(beta*zi + 1),
 (1, 0, 0, 1): -beta*(zi - zj)/(beta*zi + 1),
 (1, 0, 1, 0): 1,
 (1, 1, 1, 1): 1,
 (1, 2, 1, 2): (beta*zj + 1)/(beta*zi + 1),
 (1, 3, 1, 3): (beta*zj + 1)/(beta*zi + 1),
 (2, 0, 0, 2): -beta*(zi - zj)/(beta*zi + 1),
 (2, 0, 2, 0): 1,
 (2, 1, 1, 2): -beta*(zi - zj)/(beta*zi + 1),
 (2, 1, 2, 1): 1,
 (2, 2, 2, 2): 1,
 (2, 3, 2, 3): (beta*zj + 1)/(beta*zi + 1),
 (3, 0, 0, 3): -beta*(zi - zj)/(beta*zi + 1),
 (3, 0, 3, 0): 1,
 (3, 1, 1, 3): -beta*(zi - zj)/(beta*zi + 1),
 (3, 1, 3, 1): 1,
 (3, 2, 2, 3): -beta*(zi - zj)/(beta*zi + 1),
 (3, 2, 3, 2): 1,
 (3, 3, 3, 3): 1}
\end{lstlisting}

\bibliographystyle{alpha} 
\bibliography{models}

\end{document}